\newcommand{\IS}{\mathcal{I}}
\newcommand{\VS}{\mathcal{V}}
\newcommand{\Vp}{\mathcal{V}}
\newcommand{\VR}{\mathbb{V}_{\mathbb{R}}}
\newcommand{\VC}{\mathbb{V}_{\mathbb{C}}}
\newcommand{\Rp}{\mathbb{R}_{>0}}
\newcommand{\R}{\mathbb{R}}
\newcommand{\Q}{\mathbb{Q}}
\newcommand{\C}{\mathbb{C}}
\newcommand{\Sen }{\mathcal{S}}
\newcommand{\Sreg}{\mathcal{C}_{reg}}
\newcommand{\Sing}{\mathrm{Sing}}
\newcommand{\Snd}{\mathcal{C}_{nd}}
\newcommand{\Deg}{\mathrm{Deg}}
\newcommand{\NDeg}{\mathrm{Ndeg}}
\newcommand{\Vreg}{\mathcal{V}_{reg}}
\newcommand{\Vnd}{\mathcal{V}_{nd}}
\newcommand{\SndS}{\mathcal{C}_{nd}(g)_S}
\newcommand{\VndS}{\mathcal{V}_{nd}(g)_S}
\DeclareMathOperator*{\diag}{diag}
\DeclareMathOperator*{\rank}{rank}
\theoremstyle{theorem}
\newtheorem{Thm}{Theorem}[section]         
\newtheorem{Lemma}[Thm]{Lemma}
\newtheorem{Cor}[Thm]{Corollary}
\newtheorem{Prop}[Thm]{Proposition}
\theoremstyle{definition}
\newtheorem{Def}[Thm]{Definition} 
\newtheorem{Rem}[Thm]{Remark}
\newtheorem{Ex}[Thm]{Example}
\numberwithin{equation}{section}
\begin{document}

\title{Local and global robustness at steady state}
\author{B. Pascual-Escudero, E. Feliu}
\date{\today }

\footnotetext[1]{\textsc{Department of Mathematical Sciences, University of Copenhagen} Universitetsparken 5, 2100 Copenhagen, Denmark. \par
\textit{E-mail address}, B. Pascual-Escudero: \texttt{beatriz@math.ku.dk}  \par
  \textit{E-mail address}, E. Feliu: \texttt{efeliu@math.ku.dk} \par
  }
  
\maketitle
\begin{abstract}
We study the robustness of the  steady states of a class of systems of autonomous ordinary differential equations (ODEs), having as a central example those arising from (bio)chemical reaction networks. More precisely, we study under what conditions  the steady states of the system are contained in a parallel translate of a coordinate hyperplane. 

To this end, we focus mainly on ODEs consisting of generalized polynomials, and make use of algebraic and geometric tools to relate the local and global structure of the set of steady states.
Specifically, we consider the local property termed \emph{zero sensitivity} at a coordinate $x_i$, which means that the tangent space is contained in a hyperplane of the form $x_i=c$, and provide a criterion to identify it. We consider the global property termed \emph{absolute concentration robustness (ACR)}, meaning that all steady states are contained in a hyperplane of the form $x_i=c$. 

We clarify and formalise the relation between the two approaches. In particular, we show that ACR implies zero sensitivity, and identify when the two properties do not agree, via an intermediate property we term \emph{local ACR}. For families of systems arising from modelling biochemical reaction networks, we obtain the first practical and automated criterion to decide upon (local) ACR.
 \end{abstract}

\section{Introduction}
A crucial property of many biological systems is their capacity to maintain specific features against environmental or structural perturbations. For instance, the concentration of certain species in some biochemical systems stabilise to a fixed value upon changes in the concentrations of the other species (a property commonly known as adaptation \cite{S-F,barkai:robustness,yi:robustness,alon:robustness}).

 Formal investigation and characterization of this type of \emph{robustness} resides in the study of associated mathematical models at steady state, and on how the set of steady states depends on the input conditions. In the deterministic setting, models based on differential equations take the form $\tfrac{dx}{dt}= f_k(x)$, where $x\in \R^n_{>0}$ is the vector of concentrations, $k\in \R^r_{>0}$ a parameter vector and $f_k$ is a vector of continuously differentiable functions.
It is often the case that the dynamics of these systems are confined to invariant linear subspaces of the form $Wx=T$ for a matrix $W\in \R^{d\times n}$ and $T\in \R^d$, depending on the initial condition. 
 Then robustness can refer to variations with respect to  (some of)  the parameters $k$ of the system, or the parameters $T$ defining the linear subspace. In this work we focus on the latter.

\smallskip
Robustness is addressed following two different approaches: \emph{local} or \emph{global}.
The global approach investigates the property of having identical concentration of a certain species at any \emph{positive} steady state for all values of the parameter under consideration. Global robustness with respect to $T$ for a certain type of polynomial systems has been introduced in \cite{S-F} and termed \textit{Absolute Concentration Robustness (ACR)}. 
In that work, a simple sufficient criterion to decide upon ACR  for a (small) class of systems is presented.
   Further works \cite{Karp,P-M} explore ACR in systems where $f_k$ is polynomial, and the difficulty to establish a uniform criterion to determine whether a system presents ACR, emerges. The main problem resides in the study 
  of the zero set of $f_k$ in $\R^n_{>0}$, that is, of the intersection of the algebraic variety defined by $f_k$ with the positive orthant, for unknown $k\in \R^r_{>0}$.
   In this work we relax the definition of ACR by allowing the concentration of the species 
   at any positive steady state to take one of a finite number of values. We term this property \emph{local ACR} and it turns out that it is easier to check than ACR for generalized polynomial systems.

 The local approach focuses on the effect that a slight perturbation of the parameter has on the value of the concentration of the species at steady state. When the perturbation is infinitesimally small, then this corresponds to taking the derivative of the concentration of the species at steady state with respect to the perturbation (to be formalized in Section~\ref{sec:ZeroSensitivity}). 
 In the context of biological systems, these derivatives are termed \emph{sensitivities} and are commonly employed to quantify the degree of robustness, mainly in metabolic analysis \cite{Fiedler-Mochizuki,SMJF,OM16,BF_3,varma_morbidelli_wu_1999,steuer:robustness}. Full local robustness corresponds to the situation where the derivatives  with respect to all perturbations  are zero, and we say that the system has \emph{zero sensitivity} in $x_i$ at a given steady state. 
 
 \smallskip
 Motivated by these questions from biological systems, we study robustness for generic systems of equations 
 $g(x)=0$, $x\in \R^n_{>0}$, with $g=(g_1,\dots,g_s)$ a vector of generalized polynomials, intersected with linear subspaces $Wx=T$ as above. The system has  local ACR with respect to $x_i$ if and only if $x_i$ attains a finite number of values for any solution to $g(x)=0$.
We provide a simple criterion to decide upon local ACR, which under certain conditions, gives also zero sensitivity with respect to $T$. Clearly, ACR necessitates local ACR, hence we provide a necessary condition for ACR. Furthermore, the relation to zero sensitivity shows how the local property of zero sensitivity is related to ACR. 

Formally, let $\Vp$ be the set of positive real solutions to $g(x)=0$.    The system can only have ACR with respect to $x_i$ if for every $c,\tilde{c}\in \Vp$, the difference $c_i-\tilde{c}_i$ is zero. If $c$ is a regular point, then $\Vp$ is locally a  differential real manifold, and hence the system can only have ACR if  for all regular points $c$, the $i$-th entry of any vector tangent to $\Vp $ at   $c$ is zero. 
The latter is precisely the definition of zero sensitivity in $x_i$.

Conversely, if the system has zero sensitivity in $x_i$, the tangent space  at every point of $\Vp$ is contained in a hyperplane normal to the $i$-th canonical vector of $\mathbb{R}^n$. Under certain conditions on the regularity of $\Vp$, this happens for all regular points if and only if the value of $x_i$ is constant in each irreducible component of $\Vp$, or equivalently, as we will see, if and only if the system displays local ACR with respect to $x_i$.
This is the intuition behind the following main result, which is a version of Theorem~\ref{thm:6_5}. A non-degenerate solution $x^*$ to a system $f(x)=0$, is a solution where the Jacobian matrix $\frac{\partial f(x^*)}{\partial x}$ has maximal rank. 

\medskip
\textbf{Theorem A.} \emph{
Let $g=(g_1,\dots,g_s)$ be such that  $g_1,\dots,g_s$ are linearly independent generalized  polynomials in $\R^n$, and let $W\in \R^{(n-s)\times n}$ be of maximal rank. Assume that $\Vp\neq \emptyset$ and that all irreducible components of $\Vp$ contain  a non-degenerate solution to the system $g(x)=0$, $Wx-T=0$ for some $T$. The following statements are equivalent:
\begin{enumerate}
	\item The system $g(x)=0$ has local ACR with respect to $x_i$.
	\item The system $g(x)=0$ has zero sensitivity in $x_i$ at all 
	non-degenerate solutions to the system $g(x)=0$, $Wx-T=0$ (for some $T$).
\item The rank of the $s \times (n-1)$ matrix $(\frac{\partial g(x^*)}{\partial x})^i$, 
	 resulting from eliminating the $i$-th column of $\frac{\partial g(x^*)}{\partial x}$,  is smaller than $s$ for all $x^*\in \Vp$.
\end{enumerate}
}

The equivalence between (1) and (3) in Theorem A holds also under the milder  assumption that all irreducible components of $\Vp$  have a non-degenerate solution to the system $g(x)=0$.

\medskip
In the application to models of the concentration of the species in a reaction network,  
we are typically  interested in a parametric family of functions $g_k$ for $k\in \R^r_{>0}$.
In this case, we obtain a simpler criterion for the 
  existence of local ACR for the whole family of systems obtained when varying  $k$.  Denoting  by $B^t$ the transpose of a matrix $B$, a simplified version of this result is as follows (Theorem~\ref{thm:main_b}).

\smallskip
\textbf{Theorem B.} \emph{
Let $g_k(x)= N \diag(k) x^B$ with $N\in \R^{s\times n}$ of rank $s$, $B\in \Q^{n\times r}$, and $k\in \Rp ^r$.
 Assume 
that the matrix $N\mathrm{diag}(v)B^t$ has rank $s$ for all $v\in \mathrm{ker}(N)\cap \R^r_{>0}$.}

\emph{
Then the system $g_k(x)=0$  has local ACR with respect to $x_i$  for all  $k\in \Rp ^r$  if and only if 
  all ($s\times s$)-minors of the matrix $N\mathrm{diag}(v)B^t$ not involving column $i$ are  zero for all $v\in \mathrm{ker}(N)$.}

 \smallskip

Theorem B gives an easy-to-check necessary condition  for ACR, valid whenever $\Vp^k$ consists of  non-degenerate points to $g_k(x)=0$.  Hence, we can automatically search for ACR  in numerous families of systems 
 by simply checking a linear algebra condition. A systematic exploration of the occurrence of ACR in biochemical systems will be  presented in a  subsequent upcoming work.

 \smallskip
The paper is organized as follows: In Section \ref{sec:framework} we establish the notation and the type of systems we consider, by focusing on the motivation from dynamical systems and reaction networks.
In Section \ref{sec:ZeroSensitivity} we introduce the concept of zero sensitivity and a criterion to decide upon that (Theorem \ref{thm:k_fix_rankVSsens0}). 
We proceed in Section~\ref{sec:ACR}  to define ACR and local ACR and explore ways to determine local ACR. 
As this part requires a heavier use of real algebraic geometry, we keep Section~\ref{sec:ACR} expository by presenting the results with minimal technicalities, and the details are postponed to Section \ref{sec:backgroundAG}, for readers acquainted with the algebraic geometry language.
In Section~\ref{subsec:lACR_via_sens}, the connection between zero sensitivity and local ACR is given (Theorem~\ref{thm:6_5}), and the specific situation of systems arising from reaction networks is studied (Theorem~\ref{thm:main_b}). Examples are provided throughout to illustrate the concepts and results.

\section{Framework}\label{sec:framework}

Throughout we denote by $\mathcal{C}^1(\Omega, \Omega')$, with $\Omega\subseteq \R^n$ open and $\Omega'\subseteq \R^s$, the set of continuously differentiable functions $\Omega \rightarrow \Omega'$, and by $\frac{\partial {f}}{\partial x}\in \R^{s\times n}$   the Jacobian matrix of $f\in \mathcal{C}^1(\Omega, \Omega')$. 
 Given a matrix $A\in \R^{s\times n}$ and indices $i\in \{1,\dots,n\}$, $j\in \{1,\dots,s\}$, we denote by $A_j^i$ the matrix obtained by removing the $j$-th row and $i$-th column of $A$. 
We denote simply by $A^i$ the matrix obtained by removing the $i$-th column of $A$. 
We let $0_{s\times d}$ denote the $s\times d$ matrix with zero entries and $\mathrm{Id}_{d\times d}$   the identity matrix of size $d$.

We use $\langle v_1,\dots,v_s \rangle$ to denote the vector subspace generated by vectors $v_1,\dots,v_s\in \R^n$, or the ideal generated by polynomials $v_1,\dots,v_s\in \R[x_1,\dots,x_n]$, depending on the context.

\subsection{Motivation: steady states and reaction networks.}\label{sec:motivation}
Consider systems of autonomous ordinary differential equations (ODEs) in $\R^n$ of the form
\begin{equation}\label{eq:ODE2}
\tfrac{dx}{dt} = f(x),\qquad x\in \Omega,
\end{equation}
for $ \Omega\subseteq \R^n_{>0}$, and such that $f\in \mathcal{C}^1(\Omega,\R^n)$. Here $x=x(t)$ and reference to $t$ is omitted. 
Given such a system, we consider the vector subspace generated by the image of $f$: 
\begin{equation}\label{eq:S} S_f := \big\langle (f_1(x),\dots,f_n(x)) \, \mid\,  x\in \Omega \big\rangle \subseteq \R^n. \end{equation}
Let $s=\dim(S_f)$.   By construction, the trajectories of \eqref{eq:ODE2}  are confined to 
 the affine linear subspaces $x^0+S_f$, where $x^0$ is the initial condition (the cosets of $S_f$). 
For any $\omega\in S_f^\perp$, $\omega\cdot x$ is a linear first integral for \eqref{eq:ODE2}.  Linear first integrals appear commonly in systems arising from modelling (bio)chemical reaction networks, due to conserved moieties (Example \ref{ex:canonical} below). Their existence is not necessary to study local ACR, but play a role in our definition of zero sensitivity.  Equations 
of the cosets of $S_f$ are given by 
\begin{equation}\label{eq:cons} 
W x = T,\qquad T\in \R^{n-s},
\end{equation}
for  $W\in \R^{(n-s)\times n}$ any matrix of full rank whose rows of $W$ form a basis of $S_f^\perp$. Here $T=Wx^0$, if $x^0$ is the initial condition.

\medskip
Given an ODE system as in \eqref{eq:ODE2}, the \emph{steady states} or \emph{equilibrium points} are the solutions to the equation 
\begin{equation}\label{eq:ss}
f(x)=0,\qquad x\in \Omega.  
\end{equation}
Considering \eqref{eq:S}, $n-s$ of the equations in \eqref{eq:ss} are redundant. 
Hence \eqref{eq:ss} is equivalent to a system $g(x)=0$ with $g=(g_{1},\ldots ,g_{s})$, and such that $S_g=\mathbb{R}^s$.

Note that if $f$ is polynomial, or a generalized polynomial where exponents are allowed to be real numbers, then $\dim S_f$ agrees with the rank of the coefficient matrix of $f$.

\begin{Ex}[Chemical Reaction Networks] \label{ex:canonical}
The main scenario in which the setting above applies is that of chemical reaction network theory \cite{feinberg-book}. A \textit{(chemical) reaction network} on a set of \textit{species} $\{ X_1,\ldots ,X_n\} $ is a digraph, where nodes are linear combinations of the species with non-negative integer coefficients. 
We use $r$ for the number of edges of the graph.
Each edge represents a \textit{reaction} and is of the form
\[  \sum _{i=1}^{n} \alpha _{ij}X_i \longrightarrow \sum _{i=1}^{n} \beta _{ij}X_i,\qquad j=1,\dots,r.\]
The \textit{stoichiometric matrix} $\Gamma\in \R^{n\times r}$  is defined such that each column encodes the net production of the different species in a reaction: $\Gamma_{ij} = \beta _{ij}-\alpha _{ij}$.

We let $x(t)=(x_1(t),\ldots ,x_n(t))$  be the vector of concentrations of the species $X_1,\dots,X_n$  at time $t$. 
Given a \emph{kinetics} $\nu  \in \mathcal{C}^1(\R^n_{>0},\R^r_{\geq 0})$, the evolution of the concentration of the species over time is described by means of an ODE of the form:
\begin{equation*}\label{eq:ODE}
\tfrac{dx}{dt}=\Gamma \nu (x), \qquad x\in \R^n_{>0}.
\end{equation*}
Hence, in this setting, we have $\Omega=\R^n_{>0}$ (typically, the image of $\nu$ will be in $\R^n_{>0}$.)

So-called \emph{power-law} kinetics arise when $\nu (x)= \diag(k) x^B$ for a matrix $B\in \R^{n\times r}$ and a vector of \textit{reaction rate constants} $k\in \R^r_{>0}$. Here $x^B$ is the vector of monomials with $j$-th entry corresponding to the monomial arising from the $j$-th column of $B$ as
\[ \big(x^B\big)_j = \prod_{i=1}^n x_i^{b_{ij}}. \]
For every $k\in \R^r_{>0}$, we have an ODE system $\tfrac{dx}{dt}=f_k(x)$ where
\begin{equation}\label{eq:powerlaw}
f_k(x)= \Gamma \diag(k) \, x^B, \qquad x\in  \R^n_{>0}.
\end{equation}

Among these kinetics, the main choice arises under the \emph{mass-action} assumption, where $B$ is defined by the coefficients of the complexes at the left of each reaction (the reactants): $b_{ij} = \alpha_{ij}$.
 
In the chemical literature, the equations \eqref{eq:cons} define the so-called \emph{stoichiometric compatibility classes}. 
Under realistic assumptions, namely that each connected component of the network has a unique terminal strongly connected component \cite{feinberg-invariant}, $S_{f_k}={\rm im}(\Gamma)$ and hence is independent of  $k$.
In this case, the rows of the matrix $W$ in \eqref{eq:cons} form a basis of the left kernel of $\Gamma$.

As mentioned in the introduction,  we aim at understanding when ACR arises, that is, in determining whether the set of steady states of reaction networks is contained in a hyperplane of the form $x_i-C=0$. Abstracting from this motivating scenario, the question of interest is the general problem of whether the solution set to a system $g(x)=0$ is contained in such a hyperplane, and therefore is studied with full generality here, without restricting to  the specific scenario of $g$ being derived from the right hand-side of an ODE system. 
\end{Ex}

\subsection{Setting. }\label{sec:setting}
Our starting point for the rest of the paper will be a   function 
\[ g=(g_1,\dots,g_s), \qquad  g\in \mathcal{C}^1(\Omega, \R^s),\]
where $\Omega \subseteq \mathbb{R}^n$, such that $S_g=\mathbb{R}^s$ (remove linearly dependent entries if this is not the case, see Subsection~\ref{sec:motivation}). 
Given a vector subspace $S\subseteq \R^n$ of dimension $s$,   let $d=n-s$ and consider a full rank matrix $W\in \R^{d\times n}$ whose rows form a basis of $S^\perp$ such that  $S=\ker(W)$. For $T\in \R^d$, we define the function
\begin{equation}\label{eq:F_T}
F_{T}(x)=\left( \begin{array}{c} g(x) \\ Wx-T\end{array}\right), \qquad x\in \Omega.
\end{equation}
 In the context of Subsection~\ref{sec:motivation}, a solution to $F_T(x)=0$ in $\Omega$ is a steady state of \eqref{eq:ODE2} in the coset of $S$ with equation $Wx=T$.  
 
 Consider the set of solutions to the system $g(x)=0$ 
  \begin{equation}\label{eq:Vp}
 \Vp := \{ x\in \Omega \mid  g(x)=0\}. 
 \end{equation}

In what follows, solutions to a system where the rank drops,  which we we refer to as degenerate,  play a central role. 

\begin{Def}[Degenerate solution]\label{def:deg} 
Let $g$, $\Vp$, $S$, $W$, $T$ and $F_T$ as above. 
\begin{itemize}
\item A solution $x^*\in \Vp$ of the system $g(x)=0$ is  \textit{degenerate}   if $\mathrm{rank}\big(\frac{\partial g(x^*)}{\partial x}\big)<s$. 
\item A solution $x^*\in \Vp$ of the system $g(x)=0$ is \textit{degenerate with respect to $S$}, if 
it is degenerate for  the system $F_T(x)=0$ with $T=Wx^*$. 
\end{itemize}
  We let $\Deg (g)$, resp. $\Deg_S(g)$ denote the subsets of $\Vp$ consisting of degenerate solutions, resp. degenerate solutions with respect to $S$. Similarly, we write 
\[ \NDeg (g) := \Vp \setminus \Deg (g) \qquad \NDeg_S (g) := \Vp \setminus \Deg_S (g)\]
for the sets of non-degenerate solutions, resp. non-degenerate with respect to $S$.
\end{Def}

The definition of degeneracy with respect to $S$ is independent of the 
choice of matrix $W$.  Specifically, it is equivalent to the statement
$ \mathrm{ker}\big(\tfrac{\partial g(x^*)}{\partial x}\big)\cap S\neq \{0\},$
as $S=\ker(W)$.  Furthermore, the Jacobian of $F_T$ does not depend on $T$.
It follows readily from the definition that
\begin{equation}\label{eq:inclusion_deg}
\Deg(g)\subseteq \Deg _S(g).
\end{equation}
If $x^*\in  \Deg _S(g)\setminus \Deg(g)$, then
  $\Vp$ intersects tangentially $x^*+S$ at $x^*$. Specifically, $x^*$ is non-degenerate with respect to $S$ if and only if $\tilde{S}(x^*)\oplus S^{\bot}=\mathbb{R}^n$, where $\tilde{S}(x^*)\subseteq \mathbb{R}^n$ is the vector subspace generated by the rows of $\frac{\partial {g}(x^*)}{\partial x}$.   
  
  Definition~\ref{def:deg} extends  to all points $x^*$ where $g$ is defined, also outside the   domain $\Omega$. In particular, when $g$ is polynomial, we will use, without further considerations, complex degenerate points, found by considering the extension of $S$ as a vector subspace of $\C^n$.

\medskip
\begin{Rem}
Two systems can have the same solution set but different degenerate points. Hence, degeneracy is not only a property of the set of solutions, but can depend on the equations.  For instance, 
the systems $x=0$ and $x^2=0$ have the same solution, namely $x^*=0$. However, $0$ is degenerate for the second system but not for the first.  
\end{Rem}

\section{Zero sensitivity}\label{sec:ZeroSensitivity}

 We adopt the notation and setting of Subsection~\ref{sec:setting}. 
 The first measure of robustness for the points of $\Vp$  in \eqref{eq:Vp} considers how changes in $T$ affect the points. To be precise, each point $x^*$ of $\Vp$ belongs to exactly one of the linear varieties $Wx=T$ for $T= W x^*$. If $x^*$ is non-degenerate with respect to $S$, then $Wx=T$ is transversal to $\Vp$ at $x^*$, and hence small perturbations to $T$ lead to a new point of $\Vp$. This gives rise to the notion of \emph{sensitivity}, which has been studied in different forms and scenarios in several works in the context of reaction networks \cite{SMJF,OM16,BF_3,varma_morbidelli_wu_1999,steuer:robustness}. When the system arises from a reaction network as in Example~\ref{ex:canonical},  then $T$ may be perturbed as a result of the perturbation of the initial concentration of some species, which might be controllable in experiments (see Remark \ref{rem:onlyTnec}).  This fact motivates the study of sensitivities with respect to $T$. 
  
 As a measure of robustness, we consider the case where one of the entries of $x^*$ remains constant upon infinitesimally small changes in $T$, and we refer to this property as \emph{zero sensitivity}. 
  
\subsection{Sensitivities}\label{subsec:sensitivities}
We follow the formalism from \cite{Fel} on sensitivities with the setting of Subsection~\ref{sec:setting}. 
Consider $g\in \mathcal{C}^1(\Omega,\R^s)$ with $\Omega\subseteq \R^n$, a vector subspace $S\subseteq \R^n$ of dimension $s$, and a matrix $W\in \R^{d\times n}$ of maximal rank $d=n-s$ such that $S=\ker(W)$.  Although we do not require $\dim S_g= \R^s$  in the forthcoming constructions, when this does not hold, 
$\NDeg_S(g)=\emptyset$, and the assumptions for the results in this section do not hold.

Given  $x^*\in \NDeg_S(g)$,
let $T^*=W x^*$, and consider a perturbation of $T^*$  given by 
a continuously differentiable function 
$\gamma  \colon (-\epsilon, \epsilon)      \rightarrow  \R^d$  for
 $\epsilon>0$ such that $\gamma(0)=T^*$. 
By the non-degeneracy of $x^*$,  $\frac{\partial F_{T^*}(x^*)}{\partial x}$ has rank $n$. Hence, the Implicit Function Theorem applied to $F_{\gamma(u)}(x)$ in the variables $(u,x)$, gives the existence of $0<\delta<\epsilon$ and a continuously differentiable curve  in a neighborhood of $0$ 
 \begin{align}\label{eq:curve_c}
c\colon (-\delta , \delta ) & \longrightarrow  \Omega\end{align}
satisfying $c(0)=x^*$ and $F_{\gamma (u)}(c(u))=0$ for every $u\in (-\delta , \delta )$. 
The vector $c'(0)$ is called the \textit{sensitivity} of  $x_1,\ldots ,x_n$ with respect to the perturbation $\gamma$ at $x^*$. 
To keep track of the perturbation under consideration and the point $x^*$, we denote this vector by
\[ \Sen _{\gamma }(x^*)=(\Sen _{\gamma ,1}(x^*),\ldots ,\Sen _{\gamma ,n}(x^*)). \]
By differentiating the equation $F_{\gamma (u)}(c(u))=0$ with respect to $u$ and evaluating at $u=0$, we obtain that $\Sen _{\gamma }(x^*)$ is the solution to the system
\begin{equation}\label{eq:sys_sens}
\frac{\partial F_{T^*}(x^*)}{\partial x}\Sen _{\gamma }(x^*)+\left( \begin{array}{cc} 0_{(n-d)\times d} \\ -\mathrm{Id}_{d\times d}\end{array}\right) \gamma'(0)=0,
\end{equation}
where $\gamma '=\frac{\partial \gamma }{\partial u}$. 
See \cite{Fel} for details.   Equation~\eqref{eq:sys_sens} illustrates that  $\Sen _{\gamma }(x^*)$ depends on the choice of $W$,  as the last $d$ rows of $\frac{\partial F_{T^*}(x^*)}{\partial x}$  agree with $W$. See  the proof of Lemma~\ref{prop:eq_ST0_any} for details. 
 
\begin{Rem}\label{rem:onlyTnecAdd}
The sensitivity vector  $\Sen _{\gamma }(x^*)$ at a point $x^*\in \NDeg_S(g)$ with respect to any perturbation
 of $T^*$ is the derivative of a curve in $\VS$ at $x^*$. Thus, it is contained in the tangent space $\ker\big( \frac{\partial g(x^*)}{\partial x} \big) $ to $\Vp $ at $x^*$. This tangent space has dimension $d$ and  is generated by the sensitivities with respect to the \emph{canonical perturbations}
\begin{align*}\label{eq:gamma_tj}
\gamma _{j} \colon (-\epsilon, \epsilon) & \longrightarrow  \R^d \\
u & \longmapsto T^*+ u\, e_j   \nonumber
\end{align*}
for $j=1,\ldots ,d$, where $e_j$ is the $j$-th element of the standard basis of $\mathbb{R}^d$.  In this case $\gamma'_j(0)=e_j$
and $\Sen _{\gamma _j}(x^*)$ is the solution to the system
\begin{equation}\label{eq:sys_perturb_jT}
\tfrac{\partial F_{T^*}(x^*)}{\partial x}\, \Sen _{\gamma _j}(x^*)=\left( \begin{array}{c} 0_{(n-d)\times 1} \\ e_j\end{array}\right).\end{equation} 
In particular, for an arbitrary perturbation $\gamma$,  it holds that $\Sen _{\gamma }(x^*)=\sum_{i=1}^d\gamma'(0)_i\, \Sen _{\gamma_i }(x^*)$.
\end{Rem}

\begin{Rem}
For perturbations of the form $u \mapsto T^*+u\, v $ with $v\in \R^d$ (including canonical perturbations),  $\gamma'(0)=v$ does not depend on $T^*$. As $\frac{\partial F_{T^*}(x^*)}{\partial x}$ does not either depend on $T^*$, system \eqref{eq:sys_sens} has no explicit dependence on $T^*$ and hence sensitivities do not depend directly on the value of $T^*$ only indirectly as $Wx^*=T^*$. Observe that this does not hold for all perturbations, for example for $\gamma(u)=(u+1)T^*$, where $\gamma'(0)=T^*$.

\end{Rem}

\subsection{Zero sensitivity with respect to $S$}\label{subsec:zero_sensitivity}

We introduce now the concept of zero sensitivity, meaning that one of the components of $\Sen_\gamma(x^*)$ vanishes for all perturbations $\gamma$.

\begin{Def}\label{def:zero_sens}
Let  $g\in \mathcal{C}^1(\Omega,\R^s)$ with $\Omega\subseteq \R^n$, $S\subseteq \R^n$  a vector subspace of dimension $s$, and $W\in \R^{d\times n}$ a matrix of maximal rank $d=n-s$ such that $S=\ker(W)$. The system $g(x)=0$ 
has \textit{zero sensitivity in the variable $x_i$} (with respect to $W$) at a point $x^*\in \NDeg_S(g)$ if $\Sen _{\gamma,i}(x^*)=0$ for any perturbation $\gamma$ of  $T^*=Wx^*$.
\end{Def}

\begin{Rem}\label{rem:n=s_zero}
If $s=n$, then $d=0$ and $F_T=g$. All sensitivities are then vacuously zero with respect to any perturbation, and thus the system has zero sensitivity in all variables at all non-degenerate points.  In fact, if $d=0$, then the non-degenerate steady states are simply isolated points, and the curve $c$ in \eqref{eq:curve_c} is constant at  $x^*$.
\end{Rem}

In virtue of Remark \ref{rem:onlyTnecAdd}, it is enough to consider canonical perturbations: The system $g(x)=0$ has zero sensitivity at a point $x^*$ with respect to $W$ if and only if $\Sen _{\gamma _j,i}(x^*)=0$ for all $j=1,\dots,d$. 
Using \eqref{eq:sys_perturb_jT} and Cramer's rule, it is straightforward to compute $\Sen _{\gamma _j }(x)$ for all $j=1,\ldots ,d$.

\begin{Lemma}\label{lemma:sens} 
Let $g$ and $W$ be as in Definition~\ref{def:zero_sens}, and $F_T$ be as in \eqref{eq:F_T}. Consider $x^*\in \NDeg_S(g) $ and $T^*=Wx^*$.  
The sensitivity of $x_i$ with respect to the canonical perturbation $\gamma _{j}$  at $x^*$ is:
\begin{equation*}\label{eq:S_T}
\Sen _{\gamma _j,i}(x^*)=\frac{(-1)^{i+n-d+j}\, \mathrm{det}\hspace{-0.08cm}\left( \left( \frac{\partial F_{T^*}(x^*)}{\partial x}\right) ^i_{n-d+j}\right) }{\mathrm{det}\hspace{-0.08cm}\left( \frac{\partial F_{T^*}(x^*)}{\partial x}\right) },\qquad j=1,\ldots ,d.
\end{equation*}
\end{Lemma}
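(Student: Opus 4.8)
The plan is to compute the sensitivity vector directly from the linear system~\eqref{eq:sys_perturb_jT}, which characterizes $\Sen_{\gamma_j}(x^*)$ as the unique solution to a square linear system with coefficient matrix $\frac{\partial F_{T^*}(x^*)}{\partial x}$. Since $x^*\in\NDeg_S(g)$, this matrix is invertible, so $\Sen_{\gamma_j}(x^*)$ is well defined and Cramer's rule applies.

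First I would recall that $\frac{\partial F_{T^*}(x^*)}{\partial x}$ is an $n\times n$ matrix whose first $n-d=s$ rows are $\frac{\partial g(x^*)}{\partial x}$ and whose last $d$ rows are $W$; the right-hand side of~\eqref{eq:sys_perturb_jT} is the vector with a $1$ in position $n-d+j$ and zeros elsewhere. By Cramer's rule, the $i$-th entry of the solution is the ratio of two determinants: the denominator is $\det\!\big(\frac{\partial F_{T^*}(x^*)}{\partial x}\big)$, and the numerator is the determinant of the matrix obtained by replacing the $i$-th column of $\frac{\partial F_{T^*}(x^*)}{\partial x}$ by the right-hand side vector $(0_{(n-d)\times 1},e_j)^t$.

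The only remaining step is a cofactor (Laplace) expansion of that numerator along its $i$-th column. Because this column has a single nonzero entry, equal to $1$, located in row $n-d+j$, the expansion collapses to a single term: $(-1)^{i+(n-d+j)}$ times the minor obtained by deleting row $n-d+j$ and column $i$, i.e. $\det\!\big(\big(\frac{\partial F_{T^*}(x^*)}{\partial x}\big)^i_{n-d+j}\big)$ in the notation of Subsection~\ref{sec:framework}. Substituting this into the Cramer ratio gives exactly the claimed formula. Since each canonical perturbation is handled identically for $j=1,\dots,d$, this completes the argument.

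There is no real obstacle here: the result is an immediate consequence of Cramer's rule together with a one-term cofactor expansion, and the content of the lemma is essentially bookkeeping of the sign $(-1)^{i+n-d+j}$ and of which row and column are deleted. The only point requiring a word of care is that the denominator is nonzero, which is precisely the hypothesis $x^*\in\NDeg_S(g)$; and that the numerator matrix genuinely differs from $\frac{\partial F_{T^*}(x^*)}{\partial x}$ only in column $i$, so that the minor appearing is the one indicated.
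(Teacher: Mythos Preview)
Your proposal is correct and matches the paper's approach exactly: the paper states just before the lemma that the formula is obtained ``using \eqref{eq:sys_perturb_jT} and Cramer's rule,'' and your argument carries out precisely that computation together with the one-term cofactor expansion along the replaced column.
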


\medskip

Definition~\ref{def:zero_sens} assumes that  the matrix $W$  is fixed. In fact, as noticed in \cite{Fel}, the choice of $W$ might have consequences in the value of $\Sen _{\gamma _j}(x)$, as two matrices that do not differ in the $j$-th row might give rise to different sensitivity vectors for the $j$-th canonical perturbation. However, as we show in the next lemma, zero sensitivity does not depend on the choice of $W$, and depends only on $S$. Hence, it makes sense to refer to \emph{zero sensitivity with respect to $S$. }

\begin{Lemma}\label{prop:eq_ST0_any}
Let   $g\in \mathcal{C}^1(\Omega,\R^s)$ with $\Omega\subseteq \R^n$, $S\subseteq \R^n$, 
$x^*\in \NDeg_S(g)$ and $W,W'\in \R^{d\times n}$ of maximal rank $d=n-s$ such that $S=\ker(W)=\ker(W')$.
The system $g(x)=0$ 
has \textit{zero sensitivity in the variable $x_i$} with respect to $W$ if and only if it has \textit{zero sensitivity in the variable $x_i$} with respect to $W'$.

\end{Lemma}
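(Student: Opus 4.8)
The plan is to show directly that the $i$-th component of the sensitivity vanishes for all canonical perturbations with respect to $W$ if and only if it does so with respect to $W'$. By Remark~\ref{rem:onlyTnecAdd} it suffices to work with the canonical perturbations, and by Lemma~\ref{lemma:sens} the quantity $\Sen_{\gamma_j,i}(x^*)$ is, up to a sign and the nonzero common denominator $\det\big(\tfrac{\partial F_{T^*}(x^*)}{\partial x}\big)$, the minor $\det\big(\big(\tfrac{\partial F_{T^*}(x^*)}{\partial x}\big)^i_{n-d+j}\big)$. So the statement ``zero sensitivity in $x_i$ with respect to $W$'' is equivalent to the statement that every $(n-1)\times(n-1)$ minor of $\big(\tfrac{\partial g(x^*)}{\partial x}\big)^i$ stacked over $W^i_{\text{(one row removed)}}$ vanishes — equivalently, the full matrix
\[
M_W \;:=\; \left(\begin{array}{c} \big(\tfrac{\partial g(x^*)}{\partial x}\big)^i \\ W^i \end{array}\right) \in \R^{n\times(n-1)}
\]
obtained by deleting the $i$-th column of $\tfrac{\partial F_{T^*}(x^*)}{\partial x}$ has rank $<n-1$; here I am using that deleting column $i$ and then one row at a time, the resulting minors all vanish exactly when $\rank(M_W) \le n-2$. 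The key point is that this rank condition is expressible in terms of $S$ alone rather than the specific choice of $W$.

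First I would record the elementary linear-algebra fact that $\rank(M_W) < n-1$ is equivalent to: there exists $0\neq w\in\R^n$ with $w^t M_W = 0$, i.e. $w$ is a nonzero left-null vector of $M_W$. Writing $w=(\lambda,\mu)$ with $\lambda\in\R^s$ and $\mu\in\R^d$, this says $\lambda^t \big(\tfrac{\partial g(x^*)}{\partial x}\big)^i + \mu^t W^i = 0$; and since $x^*$ is non-degenerate with respect to $S$, the rows of $\tfrac{\partial F_{T^*}(x^*)}{\partial x}$ are independent, so $M_W$ itself has rank $n-1$ (only column $i$ was removed from a rank-$n$ matrix), forcing $w$ to be unique up to scalar and, crucially, $\lambda\neq 0$ — because if $\lambda=0$ then $\mu^t W^i=0$ with $W$ of full rank $d$ would force $\mu$ to be supported in a way that, combined with $W$ having rank $d$, is impossible unless $\mu=0$ (I'd check this using that the left-null space of the $n\times n$ full Jacobian is trivial). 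Next I would reformulate the condition intrinsically: $\rank(M_W)<n-1$ iff there is a nonzero vector in $\R^n$ that is orthogonal to the $i$-th standard coordinate projection of every row of $\tfrac{\partial F_{T^*}(x^*)}{\partial x}$. Equivalently — and this is the formulation that does not mention $W$ — the system has zero sensitivity in $x_i$ iff $e_i \in \tilde S(x^*) + S^\perp$ fails in a controlled way, or more precisely iff the image of $\ker\big(\tfrac{\partial g(x^*)}{\partial x}\big)\cap S = \{0\}$ (non-degeneracy) together with a statement that the tangent space $\ker\big(\tfrac{\partial g(x^*)}{\partial x}\big)$ lies in $\{x_i=0\}$. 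I would then verify that this last condition — the tangent space to $\Vp$ at $x^*$ being contained in the hyperplane $e_i^\perp$ — is manifestly independent of $W$, since both the tangent space $\ker\big(\tfrac{\partial g(x^*)}{\partial x}\big)$ and the hyperplane $e_i^\perp$ depend only on $g$ and $i$, not on the presentation of $S^\perp$.

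To close the loop I would show the equivalence ``zero sensitivity with respect to $W$'' $\Longleftrightarrow$ ``tangent space $\subseteq e_i^\perp$'', for any admissible $W$: the forward direction is clear because each sensitivity vector lies in the tangent space, and having all canonical ones with vanishing $i$-th coordinate plus the fact (Remark~\ref{rem:onlyTnecAdd}) that they span the $d$-dimensional tangent space forces the whole tangent space into $e_i^\perp$; the reverse direction is immediate since every sensitivity vector is tangent. Since the right-hand characterization makes no reference to $W$, applying the equivalence once for $W$ and once for $W'$ yields the lemma. The main obstacle I anticipate is the bookkeeping in the linear-algebra step showing that the left-null vector $w$ of $M_W$ must have $\lambda\neq 0$, i.e. that the failure of rank must genuinely involve the $g$-rows; this is where non-degeneracy with respect to $S$ is used essentially, and it is the hinge that makes the minor condition on $F_{T^*}$ translate into the presentation-free condition on $\ker\big(\tfrac{\partial g(x^*)}{\partial x}\big)$. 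I would also keep an eye on the degenerate sign conventions in Lemma~\ref{lemma:sens} so that ``all $\Sen_{\gamma_j,i}(x^*)=0$'' is correctly identified with ``all the relevant minors vanish,'' but that part is routine.
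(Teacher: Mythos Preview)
Your final ``close the loop'' paragraph is a complete and correct proof of the lemma: by Remark~\ref{rem:onlyTnecAdd} the canonical sensitivities span the tangent space $\ker\big(\tfrac{\partial g(x^*)}{\partial x}\big)$, and this space is determined by $g$ alone, so the condition ``all canonical sensitivities have vanishing $i$-th entry'' is equivalent to ``tangent space $\subseteq e_i^\perp$'', which is manifestly independent of $W$. This is the same underlying idea as the paper's proof---both show that the canonical sensitivities for $W$ and for $W'$ span the same $d$-dimensional subspace---but you invoke Remark~\ref{rem:onlyTnecAdd} directly to identify that subspace as the tangent space, whereas the paper does the explicit matrix computation relating the two families via the change-of-basis matrix $A$ with $W=AW'$. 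Your route is slightly more conceptual; the paper's is slightly more self-contained.

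However, the middle portion of your proposal (the $M_W$ rank argument) contains an actual error that you should excise. You claim that zero sensitivity is equivalent to $\rank(M_W)<n-1$, where $M_W=\big(\tfrac{\partial F_{T^*}(x^*)}{\partial x}\big)^i$. But since $x^*\in\NDeg_S(g)$, the full Jacobian $\tfrac{\partial F_{T^*}(x^*)}{\partial x}$ is invertible, so its $n$ columns are linearly independent, and removing one column leaves $n-1$ independent columns; hence $\rank(M_W)=n-1$ \emph{always}. The minors appearing in Lemma~\ref{lemma:sens} are only those obtained by deleting one of the last $d$ rows (the $W$-rows), not arbitrary rows, so their simultaneous vanishing is strictly weaker than $\rank(M_W)<n-1$. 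This flawed detour is unnecessary anyway: your final paragraph already proves the lemma without it, so simply drop the rank discussion and keep only the tangent-space argument.
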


\begin{proof}
Let $A\in \R^{d\times d}$ be an invertible matrix such that $W=AW'$. For $T=Wx^*$ and $T'=W'x^*$, we have
{\small \begin{equation}\label{eq:A}
\frac{\partial F_{T}(x^*)}{\partial x}=\left( \begin{array}{cc} Id_{s\times s} & 0_{s\times d} \\ 0_{d\times s} & A\end{array}\right) \frac{\partial F_{T'}(x^*)}{\partial x}. 
\end{equation}}
We let $\gamma_1,\dots,\gamma_d$ be the canonical perturbations of $T$ for the matrix $W$, and 
$\gamma'_1,\dots,\gamma'_d$ be the canonical perturbations of $T'$ for the matrix $W'$. Then, for $j\in \{ 1,\ldots ,d\} $, the sensitivity vectors $\Sen _{{\gamma_j}}(x^*)$, $\Sen _{{\gamma'_j}}(x^*)$ are respectively given as
{\small \[ 
\Sen _{\gamma_j}(x^*)=\left(\frac{\partial F_{T}(x^*)}{\partial x}\right)^{-1}\left( \begin{array}{c} 0_{s\times 1} \\ e_j\end{array}\right) \quad \textrm{and}\quad 
\Sen _{\gamma'_j}(x^*)=\left(\frac{\partial F_{T'}(x^*)}{\partial x}\right)^{-1}\left( \begin{array}{c} 0_{s\times 1} \\ e_j\end{array}\right). 
\]}
Using \eqref{eq:A}, we also have
{\small \[ \Sen _{\gamma'_j}(x^*) = \left(\frac{\partial F_{T}(x^*)}{\partial x}\right)^{-1}\left( \begin{array}{c} 0_{s\times 1} \\ Ae_j\end{array}\right). 
\]}%
As $A$ is invertible, the sets {\small $\left\{ \left( \begin{array}{c} 0_{s\times 1} \\ e_j\end{array}\right)\right\}_{j=1,\dots,d} $} and {\small $\left\{ \left( \begin{array}{c} 0_{s\times 1} \\ A e_j\end{array}\right)\right\}_{j=1,\dots,d} $} are bases of the same vector subspace $V$. 
Let $v$ be the $i$-th row of $ \left(\frac{\partial F_{T}(x^*)}{\partial x}\right)^{-1}$. Then 
$\Sen _{\gamma_j,i}(x^*)=0$ or $\Sen _{\gamma'_j,i}(x^*)=0$ for all $j=1,\ldots ,d$ are both equivalent to $v\in V^\perp$. 
\end{proof}

\begin{Rem}\label{rem:onlyTnec} 
As in \cite{Fel}, we might consider perturbations of $T$ obtained by perturbing the  point $x^*$. In the motivating setting of reaction networks, this might correspond to the addition or removal of a small amount of one of the species. Such a perturbation $\lambda\colon (-\epsilon,\epsilon) \rightarrow \R^n_{>0}$ with $\lambda(0)=x^*$  gives rise to a perturbation $\gamma\colon  (-\epsilon,\epsilon) \rightarrow \R^d$ of $T^*=Wx^*$ defined by $\gamma(s)= W(\lambda(s))$. As $W$ has full rank, any perturbation of the form $T^* \mapsto T^* + u\, v$  for $v\in \R^d$
arises as a perturbation of the form $x^* \mapsto x^* + u \, \overline{v}$ for $\overline{v}\in \R^n$ by this construction.

Studying zero sensitivity perturbing $T$ or $x^*$  is equivalent. Specifically, consider the perturbations of $x^*$ given by  $\lambda_j(s)= x^*+ s e_j$, where now $e_j$ is the $j$-th canonical vector of $\R^n$.
Then, $g(x)=0$ has zero sensitivity in the variable $x_i$ at $x^*\in \NDeg_S(g)$ if and only if $\Sen _{\gamma ,i}(x^*)=0$ for all perturbations $\gamma$ arising as above from $\lambda_j$, for $j=1,\dots,n$.

A  perturbation of a variable $x_i$ that  is not a variable of  \eqref{eq:cons} or, equivalently, such that the $i$-th column of $W$ is identically $0$ (this fact is independent of the choice of $W$), induces the constant perturbation of $T$, hence $\gamma'(0)=0$. Then, by \eqref{eq:sys_sens}, $\Sen _{\gamma }(x^*)=0$ for any $x^*\in \NDeg_S(g)$. 
\end{Rem}

The computation of the sensitivities by solving equation \eqref{eq:sys_sens} in Lemma~\ref{rem:n=s_zero}  is well established and has been presented in different contexts.  For example, an analogous computation for sensitivities is found within Ecology in \cite{Nakajima}, where systems with no conservation laws are considered and perturbation is with respect to other parameters of the system.

\subsection{Determining zero sensitivity.}
The main result of this section is a criterion that allows us to determine when a system has zero sensitivity in a variable at a point that is non-degenerate with respect to $S$, by simply inspecting the matrix $\frac{\partial g(x^*)}{\partial x}$. This criterion will be  key to the results in Section \ref{subsec:lACR_via_sens}. Recall that $\big( \tfrac{\partial g(x^*)}{\partial x}\big)^i$ is obtained   by removing the $i$-th column of $ \tfrac{\partial g(x^*)}{\partial x}$.

\begin{Thm}\label{thm:k_fix_rankVSsens0} 
Let  $g\in \mathcal{C}^1(\Omega,\R^s)$ with $\Omega\subseteq \R^n$, $S\subseteq \R^n$  a vector subspace of dimension $s$, 
and $x^*\in  \NDeg_S(g)$.
 The system $g(x)=0$  has zero sensitivity in the variable $x_i$ with respect to $S$ at $x^*$ if and only if 
  \begin{equation*}\label{eq:rank_dim3}
 \rank\Big( \big( \tfrac{\partial g(x^*)}{\partial x}\big)^i \Big)<  s.
  \end{equation*}
 
 \smallskip
In particular, zero sensitivity does not depend on   $S$, as long as $x^*\in \NDeg_S(g)$, that is, $S$ is transversal to $\ker\big(\tfrac{\partial g(x^*)}{\partial x}\big)$ and $\tfrac{\partial g(x^*)}{\partial x}$ has rank $s$.
\end{Thm}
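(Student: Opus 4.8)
The plan is to reduce the statement to elementary linear algebra on the Jacobian $J:=\tfrac{\partial g(x^*)}{\partial x}\in\R^{s\times n}$. Since $x^*\in\NDeg_S(g)$, the matrix $J$ has rank $s$, so the tangent space $\ker(J)$ to $\Vp$ at $x^*$ has dimension $d=n-s$; by Remark~\ref{rem:onlyTnecAdd} this is precisely the subspace in which every sensitivity vector at $x^*$ lies. Moreover, by Lemma~\ref{prop:eq_ST0_any} the property of zero sensitivity in $x_i$ at $x^*$ does not depend on the choice of $W$ with $S=\ker(W)$, so I may fix one such $W$ and work with the corresponding $F_{T^*}$, $T^*=Wx^*$.

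The first step would recast zero sensitivity as a subspace containment. By Remark~\ref{rem:onlyTnecAdd}, the sensitivities $\Sen_{\gamma_1}(x^*),\dots,\Sen_{\gamma_d}(x^*)$ with respect to the canonical perturbations span $\ker(J)$, and every sensitivity $\Sen_{\gamma}(x^*)$ is a linear combination of them. Hence $\Sen_{\gamma,i}(x^*)=0$ for \emph{all} perturbations $\gamma$ if and only if the $i$-th coordinate function vanishes identically on $\ker(J)$, i.e. $\ker(J)\subseteq H_i$, where $H_i:=\{v\in\R^n : v_i=0\}$.

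The remaining step is a dimension count. Writing $J^i$ for $J$ with its $i$-th column deleted (as in the statement) and identifying $H_i$ with $\R^{n-1}$ by dropping the $i$-th coordinate, one has $\ker(J)\cap H_i\cong\ker(J^i)$, which has dimension $(n-1)-\rank(J^i)$. Since $\dim\ker(J)=n-s$, the containment $\ker(J)\subseteq H_i$ holds if and only if $(n-1)-\rank(J^i)=n-s$, that is, $\rank(J^i)=s-1$; and because deleting one column of $J$ lowers its rank by at most one while $\rank(J)=s$, this is equivalent to $\rank(J^i)<s$. The ``in particular'' assertion is then immediate: the criterion $\rank(J^i)<s$ involves only $J=\tfrac{\partial g(x^*)}{\partial x}$, whereas the hypothesis $x^*\in\NDeg_S(g)$ amounts exactly to $\rank(J)=s$ together with $\ker(J)\cap S=\{0\}$.

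The one point needing care is the first reduction: zero sensitivity quantifies over \emph{all} perturbations $\gamma$, not just the canonical ones, so it is essential to invoke Remark~\ref{rem:onlyTnecAdd} (and Lemma~\ref{prop:eq_ST0_any} for well-definedness in $S$) to pass to the finite-dimensional statement about $\ker(J)$. The edge case $d=0$ of Remark~\ref{rem:n=s_zero} is consistent with the formula, since then $J$ is square of rank $n=s$ and $\rank(J^i)=n-1<s$, matching the fact that all sensitivities are then vacuously zero. Everything after the reduction is routine.
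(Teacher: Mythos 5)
Your proof is correct, and it takes a somewhat different route from the paper's. The paper works through Lemma~\ref{lemma:sens}: it expresses each canonical sensitivity $\Sen_{\gamma_j,i}(x^*)$ by Cramer's rule, translates the vanishing of the relevant cofactor determinants into the statement that the vectors $\left(\begin{smallmatrix}0\\ e_j\end{smallmatrix}\right)$ lie in the column span of the first $n-1$ columns of $\tfrac{\partial F_{T^*}(x^*)}{\partial x}$, and then proves the resulting equivalence of rank conditions on an augmented matrix built from the columns of $\tfrac{\partial g(x^*)}{\partial x}$ and of $W$. You instead lean on the spanning statement of Remark~\ref{rem:onlyTnecAdd} (which follows from the invertibility of $\tfrac{\partial F_{T^*}(x^*)}{\partial x}$ together with \eqref{eq:sys_perturb_jT}, and from linearity of \eqref{eq:sys_sens}) to recast zero sensitivity as the containment $\ker\big(\tfrac{\partial g(x^*)}{\partial x}\big)\subseteq\{v:v_i=0\}$, and then finish with the dimension count $\dim\ker\big(\big(\tfrac{\partial g(x^*)}{\partial x}\big)^i\big)=(n-1)-\rank\big(\big(\tfrac{\partial g(x^*)}{\partial x}\big)^i\big)$ plus the observation that deleting one column drops the rank by at most one. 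Your version avoids determinants and eliminates $W$ from the core argument (indeed, after your reduction the appeal to Lemma~\ref{prop:eq_ST0_any} is no longer strictly needed, since $\ker\big(\tfrac{\partial g(x^*)}{\partial x}\big)$ does not see $W$), which makes the ``in particular'' independence from $S$ immediate; the paper's computation, by contrast, stays closer to the explicit sensitivity formulas of Lemma~\ref{lemma:sens}, which are reused elsewhere. Your handling of the degenerate case $d=0$ and your use of $\rank\big(\tfrac{\partial g(x^*)}{\partial x}\big)=s$ (guaranteed by $x^*\in\NDeg_S(g)$ via \eqref{eq:inclusion_deg}) are both sound.
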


\begin{proof}
Without loss of generality, we assume that $i=n$. Let $W\in \R^{d\times n}$ such that $S=\ker(W)$ and $F_T$ the associated map as in \eqref{eq:F_T}. 

 For $\ell=1,\ldots ,n$, let $u_\ell$ stand for the $\ell$-th column of the matrix $\frac{\partial g(x^*)}{\partial x}$, and let $w_\ell$ stand for the $\ell$-th column of the matrix $W$. By Lemma~\ref{lemma:sens}, for $j=1,\ldots ,d$, $\Sen _{\gamma _j,n}(x^*)=0$ if and only if 
\begin{equation}\label{eq:vanish}
0= (-1)^{2n-d+j}\mathrm{det} \left( \left( \tfrac{\partial F_{T}(x^*)}{\partial x}\right) _{n-d+j}^n\right) =\mathrm{det}\left( \begin{array}{cccccc} u_1 & \cdots & u_{n-1} & 0 \\ w_1 & \cdots & w_{n-1} & e_j \end{array}\right) \mbox{,}
\end{equation}
 
 As $x^*\in \NDeg_S(g)$, the rank of $\frac{\partial F_{T}(x^*)}{\partial x}$ is maximal. Hence  
 the column vectors in the set  {\small $U=\left\{ \left( \begin{array}{c} u_\ell \\ w_\ell \end{array}\right) \right\} _{\ell=1}^{n-1}$ }are linearly independent. 
  Then \eqref{eq:vanish} holds if and only if all column vectors {\small $\left( \begin{array}{c} 0 \\ e_j \end{array}\right)$} for $j=1,\ldots ,d$ are linear combinations of $U$. The statement of the theorem is  thus equivalent to the following statement:
\small
\begin{equation}\label{eq:thm_eq}\mathrm{rank}  \left( \left( \begin{array}{ccc} u_1 &  \cdots & u_{n-1} \end{array}\right)  \right) < n-d \Leftrightarrow \mathrm{rank} \left( \left( \begin{array}{cccccc} u_1 & \cdots & u_{n-1} & 0 & \ldots & 0\\ w_1 & \cdots & w_{n-1} & e_1 & \ldots & e_d\end{array}\right)  \right) < n \mbox{.}\end{equation}
\normalsize
Now (\ref{eq:thm_eq}) follows directly from the fact that the last $d$ rows of the matrix on the right are linearly independent and independent of the first $n-d$ rows.
\end{proof}

In other words, Theorem~\ref{thm:k_fix_rankVSsens0}  says that $g(x)=0$  has zero sensitivity in the variable $x_i$ with respect to $S$ at  $x^*$ if and only if the $i$-th column of the matrix $\frac{\partial g(x^*)}{\partial x}$ is essential for this matrix to have full rank.

\section{Local ACR}\label{sec:ACR}

We consider now robustness in another sense, by requiring that the $i$-th component of any solution to $g(x)=0$ only  attains a finite number of values. 
We  give two definitions: ACR and a local version, that we call  local ACR. We will see that this local property is closely related to zero sensitivity, easier to check than ACR and necessary   for ACR. Moreover, in some cases the two notions turn out to be equivalent  (see Proposition \ref{Prop:localACR-ACR} and Remark \ref{rk:param}). 
We provide the definitions in full generality in Subsection~\ref{subsec:def_lACR}, but study the case where $g(x)$ is a generalized polynomial with rational exponents in Subsection~\ref{subsec:detecting_lACR}.  In this setting, we develop practical tools to check for local ACR. In this section we present the results, but the proofs and details are provided in Section~\ref{sec:backgroundAG}, which relies on notions from algebraic geometry.

\subsection{Definition of local ACR}\label{subsec:def_lACR}
We now state the definition of ACR together with local ACR. The former was introduced in \cite{S-F} in the context of reaction networks for steady states of systems with mass-action kinetics. We keep the same name, even though we are concerned with general systems, not necessarily modeling concentrations.

\begin{Def}\label{def:localACR}
Let  $g\colon \Omega\rightarrow \R^s$ with $\Omega\subseteq \R^n$ be a function, 
  $\Vp$ be the set of solutions to the system $g(x)=0$ as in \eqref{eq:Vp}, and $\Vp'\subseteq \Vp$ be a non-empty subset.
\begin{itemize}
\item
The system $g(x)=0$ has \textit{Absolute Concentration Robustness (ACR)} with respect to $x_i$ over $\Vp'$ if  there exists $C\in\R$ such that  $x_i=C$ for all $x\in \Vp '$.
\item The system $g(x)=0$ has \textit{local Absolute Concentration Robustness (local ACR)} with respect to $x_i$ over $\Vp'$ if there exist $C_1,\dots,C_\ell \in \mathbb{R}$ such that $x_i\in \{C_1,\dots,C_\ell\}$ for all $x\in \Vp'$.
\end{itemize}
If $\Vp'=\Vp$, we simply say that the system has (local) ACR with respect to $x_i$.
\end{Def}

A system $g(x)=0$ has ACR with respect to $x_i$ if   $\Vp$   is contained in a hyperplane of the form $x_i-C=0$, and it has local ACR with respect to $x_i$ if $\Vp$ is contained in a finite union of disjoint hyperplanes with equations $x_i-C_1=0,\dots, x_i-C_\ell=0$ respectively.
Clearly, ACR implies local ACR. But the converse might not be true in general (see Example \ref{ex:lACR1} below). However, they are equivalent under certain conditions.

\begin{Prop}\label{Prop:localACR-ACR}
With the notation of Definition~\ref{def:localACR}, assume $\Vp'\neq \emptyset$. Then ACR over $\Vp'$ implies local ACR over $\Vp'$. Moreover, if $\Vp'$ is connected, then local ACR over $\Vp'$  is equivalent to ACR over $\Vp'$. 
\end{Prop}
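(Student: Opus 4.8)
The plan is to prove the two assertions of Proposition~\ref{Prop:localACR-ACR} in turn, the first being essentially immediate from the definitions and the second resting on a connectedness argument.

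\medskip
\textbf{ACR implies local ACR.} Suppose the system $g(x)=0$ has ACR with respect to $x_i$ over $\Vp'$. By Definition~\ref{def:localACR}, there is a constant $C\in\R$ with $x_i=C$ for all $x\in\Vp'$. Then taking $\ell=1$ and $C_1=C$, we have $x_i\in\{C_1\}$ for all $x\in\Vp'$, which is precisely the statement that $g(x)=0$ has local ACR with respect to $x_i$ over $\Vp'$. Since $\Vp'\neq\emptyset$ by hypothesis, there is no issue of vacuousness. This direction needs no further work.

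\medskip
\textbf{Local ACR plus connectedness implies ACR.} Assume $\Vp'$ is connected and nonempty, and that $g(x)=0$ has local ACR with respect to $x_i$ over $\Vp'$, witnessed by values $C_1,\dots,C_\ell\in\R$. Consider the $i$-th coordinate projection $\pi_i\colon\R^n\to\R$, $\pi_i(x)=x_i$, which is continuous; restrict it to $\Vp'$. The hypothesis says $\pi_i(\Vp')\subseteq\{C_1,\dots,C_\ell\}$, a finite set. Since $\Vp'$ is connected and $\pi_i$ is continuous, the image $\pi_i(\Vp')$ is a connected subset of $\R$; but a connected subset of a finite (hence discrete in the subspace topology) set of points is a single point. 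Because $\Vp'\neq\emptyset$, that point is some $C_j$, and we conclude $x_i=C_j$ for all $x\in\Vp'$, i.e., the system has ACR with respect to $x_i$ over $\Vp'$ with $C=C_j$. (One can phrase this without explicitly invoking discreteness: the sets $\pi_i^{-1}(C_j)\cap\Vp'$, $j=1,\dots,\ell$, after discarding empty ones and merging any coincident values among the $C_j$, form a finite partition of $\Vp'$ into nonempty relatively closed—indeed relatively clopen—subsets, forcing all but one to be empty by connectedness.)

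\medskip
There is essentially no obstacle here: the first part is a tautology and the second is the standard fact that a continuous image of a connected set is connected, combined with the observation that finite subsets of $\R$ are totally disconnected. The only point requiring a word of care is to note that the $C_j$ need not be assumed distinct in Definition~\ref{def:localACR}, so one should either assume them distinct without loss of generality or argue via the clopen partition as indicated above; either way the conclusion follows. Example~\ref{ex:lACR1} (referenced but not yet seen) presumably exhibits a disconnected $\Vp'$ where the two notions genuinely differ, confirming that connectedness cannot be dropped.
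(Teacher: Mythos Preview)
Your proof is correct and follows essentially the same approach as the paper: both argue that the first implication is immediate from the definitions, and for the second use that the continuous projection onto the $i$-th coordinate sends the connected set $\Vp'$ to a connected subset of the finite set $\{C_1,\dots,C_\ell\}$, which must therefore be a singleton. Your write-up simply spells out a few more details (continuity of $\pi_i$, the alternative clopen-partition phrasing) than the paper's terse version.
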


\begin{proof}
The first statement is trivial. For the second statement,  by hypothesis, there exist  $C_1,\dots,C_\ell$ 
such that the projection of $\Vp'$ onto the variable $x_i$ is the set $\{C_1,\dots,C_\ell\}$. 
 As the projection of a connected set is connected and $\Vp'$ is connected by assumption, the set $\{C_1,\dots,C_\ell\}$ must be a singleton. 
\end{proof}

\begin{Rem}\label{rem:n=s_lACR} If $\Vp $ is finite, then the system $g(x)=0$ has local ACR in all variables. However, it may not have ACR in case the cardinality of $\Vp$ is larger than $1$.
\end{Rem}

\begin{Ex}\label{ex:introCRN}
Consider the following reaction network (c.f. Example \ref{ex:canonical})
\begin{align*}
X_1+X_2 & \ce{->[k_1]}  2X_2 & 
X_2 & \ce{->[k_2]}  X_1
\end{align*}
 with $k_1,k_2\in \mathbb{R}_{>0}$. With mass-action kinetics, it gives rise to the ODE system in $\Omega =\Rp ^2$
\begin{align*}
\tfrac{dx_1}{dt} & =-k_1x_1x_2+k_2x_2\mbox{,} &
\tfrac{dx_2}{dt} & =k_1x_1x_2-k_2x_2\mbox{.}
\end{align*} 
Let $f$ be defined by the right-hand side of the ODE system, and let $g(x)=k_1x_1x_2-k_2x_2$ obtained after removing linear dependencies of the entries of $f$. 
The set of positive steady states agrees with the zero set of $g$: 
\[\Vp =\{ x_1=\tfrac{k_2}{k_1},x_2>0\} \mbox{.}\] 
The system has ACR with respect to $x_1$, regardless of the specific values of $k_1,k_2$. This example was introduced in \cite{S-F} as a simple network having ACR. 
\end{Ex}

\begin{Ex}\label{ex:complex}
Consider the function $g=(g_1,g_2)$ in $\Omega =\Rp ^3$ defined by 
\begin{align*}
g_1(x) & =x_1^2x_3-x_2x_3-x_1^2+x_2\mbox{,}\\
g_2(x) & =x_1^4+2x_1^2x_2^2-2x_1^2+x_2^4-2x_2^2+x_3^2-2x_3+2\mbox{.}
\end{align*}
As $g_1(x)=(x_1^2-x_2)(x_3-1)$ and $g_2(x)=(x_1^2+x_2^2-1)^2+(x_3-1)^2$, the set of solutions to $g(x)=0$  is 
\[ \Vp =\{ x_3=1,x_1^2+x_2^2=1:x_1,x_2>0\}. \]
Therefore, the system has ACR with respect to $x_3$. 
\end{Ex}

\begin{Ex}\label{ex:lACR1}
Consider the following reaction network 
\begin{equation*}\label{eq:lACR1}
3X_1+X_2\ce{->[k_1]}  4X_1 \qquad
2X_1+X_2 \ce{->[k_2]}  3X_2  \qquad
X_1+X_2 \ce{->[k_3]}  2X_1,
\end{equation*}
  with $k_1,k_2,k_3\in \mathbb{R}_{>0}$ and
having two species ($n=2$) and 3 reactions. With mass-action kinetics, we obtain the ODE system
\begin{align*}
\tfrac{dx_1}{dt} & =k_1x_1^3x_2-2k_2x_1^2x_2+k_3x_1x_2\mbox{,}&
\tfrac{dx_2}{dt} & =-(k_1x_1^3x_2-2k_2x_1^2x_2+k_3x_1x_2)\mbox{.}
\end{align*}
As $\tfrac{dx_1}{dt}=-\tfrac{dx_2}{dt}$, we consider $g(x)=k_1x_1^3x_2-2k_2x_1^2x_2+k_3x_1x_2$. 
The positive steady states are the positive solutions to
\[ x_1x_2(k_1x_1^2-2k_2x_1+k_3)=0, \quad\textrm{equivalently}, \quad k_1x_1^2-2k_2x_1+k_3=0.\]
Let $D(k)=4(k_2^2-k_1k_3)$ be the discriminant of  $k_1x_1^2-2k_2x_1+k_3$. 
If $D(k)<0$, the system has no positive steady state. If $D(k)=0$, then  $\Vp$ consists of one double half-line $\left\{ x_1=\frac{k_2}{k_1}\in \Rp \right\} \cap \mathbb{R}^2_{>0}$, and the system has ACR with respect to $x_1$. 
If $D(k)>0$, then 
$\Vp$ consists of two half-lines, 
\begin{equation}\label{eq:disc}
\left\{ x_1=\tfrac{k_2}{k_1}-\tfrac{\sqrt{k_2^2-k_1k_3}}{k_1} \right\} \cap \mathbb{R}^2_{>0}\quad \mathrm{\; and \; }\quad\left\{ x_1=\tfrac{k_2}{k_1}+\tfrac{\sqrt{k_2^2-k_1k_3}}{k_1} \right\} \cap \mathbb{R}^2_{>0}\mbox{,}
\end{equation}
and the system has local ACR,  but not ACR, with respect to $x_1$.
 \end{Ex}

\begin{Rem} \label{rk:param}
If  $\Vp$ admits a continuous parametrization, that is, is the image of a continuous map from a connected subset of some $\R^\ell$, then $\Vp$ is connected. For any such system with $\Vp\neq \emptyset$, ACR is equivalent to local ACR by Proposition~\ref{Prop:localACR-ACR}.
In particular, this is the case when the system $g(x)=0$ is equivalent to a system consisting of binomials and   $\Vp\neq \emptyset$  (see \cite{Sturmfels-Binomial}). In the context of reaction networks, see \cite{CDSS} and \cite{P-MDSC} for such examples.
\end{Rem}

\subsection{Detecting local ACR}\label{subsec:detecting_lACR}

In this subsection we provide a criterion to determine local ACR via algebraic means. The results are derived employing concepts from algebraic geometry and hence applicable when $g$ is polynomial. Via an easy transformation, generalized polynomials with rational exponents can be treated in the same way. To see that, assume that $g$ consists of generalized polynomials with rational exponents:
\[ g=(g_1,\dots,g_s),\qquad g_i(x)= \sum_{\alpha\in A_i} c_\alpha x^\alpha,\quad i=1\dots,s,\]
with $A_i\subseteq \Q^{n}$ a finite set and $c_\alpha\in \R$ non-zero. 
 We have $g\in \mathcal{C}^1(\R^n_{>0},\R^s)$, so    $\Omega=\R^n_{>0}$, and we study positive solutions.
We establish  that any generalized polynomial $g$ can be transformed into a polynomial function $\widetilde{g}$ such that the respective sets of positive solutions are in one-to-one correspondence.
The idea is to make a change of variables such that the exponents of $g$ become integers, and to multiply the components of $g$ by a monomial such that all exponents become non-negative.

For every $j=1,\dots,n$, let $m_j$ be the least common multiple  of the denominators of the absolute value of all the exponents of $x_j$ in $g(x)$:
\[  m_j := {\rm lcm} \Big( \bigcup_{i=1}^s \{\textrm{denominator of }|\alpha_j|  : \alpha \in A_i \}\Big) \in \mathbb{Z}_{\geq 0}.\]  
Then for $i=1,\dots,s$, $g_i(z_1^{m_1},\ldots ,z_n^{m_n})$ is a generalized polynomial in $z=(z_1,\ldots ,z_n)$ with integer exponents in the set $\widetilde{A}_i:= \{ (\alpha_1m_1,\dots,\alpha_nm_n)\in \mathbb{Z}^n : \alpha \in A_i\}$.
For $j=1,\dots,n$, 
let 
\[ \beta(i)_{j} := {\rm min}\  \{ \delta \in \mathbb{Z}_{\geq 0} : \delta + \alpha_j \in \mathbb{Z}_{\geq 0} \textrm{ for all }\alpha\in \widetilde{A}_i\}.\]
With these definitions, $\widetilde{g}(z)$  defined by
\begin{equation}\label{eq:gtilde}
\widetilde{g}_i(z):= z^{\beta(i)} g_i(z_1^{m_1},\ldots ,z_n^{m_n})
\end{equation}
is a polynomial for $i=1,\dots,s$. Furthermore, the diffeomorphism
\begin{equation}\label{eq:corresp_x_z}
\varphi\colon \R^n_{>0}  \rightarrow  \R^n_{>0} \qquad 
z  \mapsto   (z_1^{m_1},\ldots ,z_n^{m_n}) 
\end{equation}
induces a homeomorphism between the set of positive solutions to $\widetilde{g}(z)=0$ and the set of positive solutions to $g(x)=0$ $$\Vp =\{ x\in \mathbb{R}^n_{>0}:g(x)=0 \} =\{ (z_1^{m_1},\ldots ,z_n^{m_n})\in \mathbb{R}^n_{>0}:\tilde{g}(z)=0 \} =\{ \varphi (z)\in \mathbb{R}^n_{>0}:\tilde{g}(z)=0 \} $$ (additionally, this map is locally a diffeomorphism between non-degenerate solutions), as $z^{\beta(i)}$ cannot vanish on $\R^n_{>0}$.

The following statements relating $g$ and $\widetilde{g}$  follow from the discussion above:
\begin{itemize}
\item The system $g(x)=0$ has local ACR with respect to $x_i$ over a set $\Vp'$ if and only if 
 the polynomial system $\widetilde{g}(z)=0$ has local ACR with respect to $z_i$ over the set $\varphi^{-1}(\Vp')$. 
\item By letting $z^\beta=(z^{\beta(1)},\dots,z^{\beta(n)})$, it holds that
\begin{equation}\label{eq:jacg}
\tfrac{\partial\widetilde{g}(z)}{\partial z} = g(\varphi(z))\tfrac{\partial z^{\beta }}{\partial z}  +   z^{\beta }\, \tfrac{\partial g(x)}{\partial x}|_{x=\varphi(z)}\, \tfrac{\partial \varphi(z)}{\partial z}.  
\end{equation}
If $z\in \R^n_{>0}$ is such that $\widetilde{g}(z)=0$, then the first summand vanishes.  As $\varphi$ is a diffeomorphism, the rank of $\tfrac{\partial\widetilde{g}(z)}{\partial z}$ agrees with the rank of 
$ \tfrac{\partial g(x)}{\partial x}|_{x=\varphi(z)}$ for any solution to $\tilde{g}(z)=0$. 
\item The system $g(x)=0$ has zero sensitivity  in $x_i$ with respect to $S$ at a point $x^*$ if and only if 
the system $\widetilde{g}(z)=0$ has zero sensitivity  in $z_i$ with respect to $S$ at the point $\varphi^{-1}(x^*)$.
\end{itemize}

Note that by construction, if $g$ is a polynomial, then $\widetilde{g}=g$.

\begin{Ex}\label{ex:lACR2r}
We consider the network in Example~\ref{ex:lACR1} with a power-law kinetics   \eqref{eq:powerlaw} with exponent matrix
\[B=\begin{pmatrix} 1 & \sfrac{2}{3} & \sfrac{-1}{3} \\ \sfrac{2}{3}  & \sfrac{2}{3}  & \sfrac{2}{3} \end{pmatrix}\mbox{.}\]
With $k_1=k_2=k_3=1$, the generalized polynomial describing $\tfrac{dx_1}{dt}$ is: 
\[ g(x)=x_1x_2^{\sfrac{2}{3}}-2x_1^{\sfrac{2}{3}}x_2^{\sfrac{2}{3}}+x_1^{-\sfrac{1}{3}}x_2^{\sfrac{2}{3}}.\]
With the notation above, we have $m_1=m_2=3$ and
\[ g(z_1^3,z_2^3)=z_1^3 z_2^2 -2z_1^2 z_2^2+ z_1^{-1} z_2^{2}. \]
This yields to $\beta(1)=(1,0)$ and hence, the polynomial $\widetilde{g}$ from \eqref{eq:gtilde} is
\begin{equation*}\label{eq:gtildex}
\widetilde{g}(z_1,z_2)=  z_1 g(z_1^3,z_2^3) =  
z_1^4 z_2^2 -2z_1^3 z_2^2+  z_2^{2}. 
\end{equation*}
The set of positive solutions to $\widetilde{g}(z)=0$ is in one-to-one correspondence with
 the set of positive solutions to $g(x)=0$   via the diffeomorphism $(z_1,z_2)\longmapsto (z_1^3,z_2^3)$.
\end{Ex}

We have established that the study of the zero set of generalized polynomials 
 can be reduced to the study of the zero set of polynomials. Even though our interest is on the positive real solutions to the system $g(x)=0$,  complex solutions  are required in order to use certain algebraic-geometric tools.   In this section, a basic concept is required, namely that of \emph{irreducible components}.

Let $g=(g_1,\dots,g_s)$ be a polynomial function in $n$ variables with linearly independent entries.
The set of complex solutions to $g(x)=0$ is a complex algebraic variety, that is,  the zero set of a collection of polynomials, denoted by $\mathbb{V}_{\mathbb{C}}(\langle g_1,\ldots ,g_s\rangle )$. 
Any algebraic variety can be written in a unique  way as the union of finitely many \textit{irreducible components}  $\mathbb{V}_{\mathbb{C}}(\langle g_1,\ldots ,g_s\rangle )=\mathcal{Y} _1\cup \ldots \cup \mathcal{Y}_m$, where $\mathcal{Y}_j\nsubseteq \mathcal{Y}_k$ whenever $j\neq k$, and each $\mathcal{Y}_j$ is  a (non-empty) algebraic variety that cannot be  decomposed into two  algebraic varieties in the same way. We refer to Section \ref{subsec:irred_deg} for details.

We will refer to the \textit{irreducible components of $\Vp =\mathbb{V}_{\mathbb{C}}(\langle g_1,\ldots ,g_s\rangle  ) \cap \mathbb{R}_{>0}^n$}, meaning the subsets of the form $\mathcal{Y}\cap \mathbb{R}_{>0}^{n}\subseteq \Vp $ where $\mathcal{Y}$ is an irreducible component of   $\mathbb{V}_{\mathbb{C}}(\langle g_1,\ldots ,g_s\rangle  )$. More generally, if $g$ is a generalized polynomial, the irreducible components of $\Vp$ are by definition the sets obtained by applying the map $\varphi$ in  \eqref{eq:corresp_x_z} to the irreducible components of $\mathbb{V}_{\mathbb{C}}(\langle \widetilde{g}_1,\ldots ,\widetilde{g}_s\rangle  ) \cap \mathbb{R}_{>0}^n$ for the polynomial map in \eqref{eq:gtilde}.

We will sometimes restrict our results to a subset $\mathcal{C}\subseteq \{ \mathcal{Y}_1,\ldots ,\mathcal{Y}_m\} $ of the irreducible components of $\Vp$. We let   
\begin{equation}\label{eq:Vc}
\Vp _{\mathcal{C}}=\left(\cup _{\mathcal{Y}\in \mathcal{C}} \mathcal{Y}\right) \cap \mathbb{R}_{>0}^n
\end{equation} be the part of $\Vp$ that lies on the selected components. In particular, we are mostly interested in the set $\Snd(g)$ of irreducible components that have a positive non-degenerate point: 
\begin{equation}\label{eq:Vnd}
\Vnd(g):= \Vp _{\Snd(g)}.
\end{equation} 
The existence of a non-degenerate point in each considered irreducible component is a main hypothesis in our criteria.

\begin{Ex}\label{ex:introCRN2}
Back to Example \ref{ex:introCRN}, 
$\mathbb{V}_{\mathbb{C}}(\langle g\rangle )$ is
$$\mathbb{V}_{\mathbb{C}}(\langle k_1x_1x_2-k_2x_2\rangle )=\mathbb{V}_{\mathbb{C}}(\langle x_2\rangle )\cup \mathbb{V}_{\mathbb{C}}(\langle k_1x_1-k_2\rangle )\mbox{,}$$
which has two irreducible components. Only the second component has points in $\R^2_{>0}$ and hence $\Vp =\mathbb{V}_{\mathbb{C}}(\langle k_1x_1x_2-k_2x_2\rangle )\cap \mathbb{R}^2_{>0}=\mathbb{V}_{\mathbb{C}}(\langle k_1x_1-k_2\rangle )\cap \mathbb{R}^2_{>0} $. Note that the system has local ACR and ACR with respect to $x_1$ over $\Vp $.
\end{Ex}

It turns out that local ACR over $\Vp$ corresponds to ACR over each irreducible component under certain non-degeneracy conditions. 
The following result is shown in Section~\ref{sec:backgroundAG}. Specifically, it follows from Corollary \ref{cor:alg_lACR} and Proposition~\ref{prop:degVSsing},  together with the correspondence between generalized polynomials and polynomials above.

\begin{Prop}\label{prop:alg_lACR_4}
Let $g=(g_1,\dots,g_s)$ be a generalized polynomial function in $\R^n_{>0}$, and let $\mathcal{C}=\{ \mathcal{Y}_1, \ldots , \mathcal{Y}_k\} \subseteq \Snd(g)$ be non-empty.
The system  $g(x)=0$ has local ACR  with respect to $x_i$ over $\Vp _{\mathcal{C}}$ if and only if it has ACR  with respect to $x_i$ over $\Vp _{\mathcal{Y}_j}$ for all $j=1,\ldots ,k$.
 \end{Prop}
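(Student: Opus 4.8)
The plan is to reduce everything to the polynomial case via the map $\varphi$ from \eqref{eq:corresp_x_z}, and then argue componentwise. First I would invoke the bullet points following \eqref{eq:corresp_x_z}: the diffeomorphism $\varphi$ carries the irreducible components of $\mathbb{V}_{\mathbb{C}}(\langle \widetilde{g}_1,\dots,\widetilde{g}_s\rangle)\cap\R^n_{>0}$ to the irreducible components of $\Vp$, preserves non-degenerate solutions, and transports local ACR and ACR over any subset. Hence it suffices to prove the statement when $g$ is already a polynomial and $\Vp = \mathbb{V}_{\mathbb{C}}(\langle g_1,\dots,g_s\rangle)\cap\R^n_{>0}$; the general case follows by applying $\varphi$ to both the hypothesis and the conclusion.

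The backward implication is the easy half: if $g(x)=0$ has ACR with respect to $x_i$ over $\Vp_{\mathcal{Y}_j}$ for each $j$, say $x_i = C_j$ on $\Vp_{\mathcal{Y}_j}$, then on $\Vp_{\mathcal{C}} = \bigcup_j \Vp_{\mathcal{Y}_j}$ the coordinate $x_i$ takes values in the finite set $\{C_1,\dots,C_k\}$, which is exactly local ACR over $\Vp_{\mathcal{C}}$ by Definition~\ref{def:localACR}. This direction needs no non-degeneracy hypothesis.

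For the forward implication, assume local ACR over $\Vp_{\mathcal{C}}$: there are $C_1,\dots,C_\ell\in\R$ with $x_i\in\{C_1,\dots,C_\ell\}$ for all $x\in\Vp_{\mathcal{C}}$. Fix one component $\mathcal{Y}_j\in\mathcal{C}\subseteq\Snd(g)$; I want to show $x_i$ is constant on $\Vp_{\mathcal{Y}_j}=\mathcal{Y}_j\cap\R^n_{>0}$. The key point is irreducibility: the polynomial $x_i - C_1$ (or, more precisely, the product $\prod_{m=1}^{\ell}(x_i-C_m)$) vanishes on every real positive point of $\mathcal{Y}_j$, and since $\mathcal{Y}_j$ has a non-degenerate positive point, $\Vp_{\mathcal{Y}_j}$ is a real submanifold of dimension $\dim_\C\mathcal{Y}_j = n-s$ near that point, hence Zariski-dense in $\mathcal{Y}_j$; therefore $\prod_m(x_i-C_m)$ vanishes identically on $\mathcal{Y}_j$. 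Because $\mathcal{Y}_j$ is irreducible, its ideal is prime, so one of the factors $x_i - C_m$ must vanish on all of $\mathcal{Y}_j$, i.e. $x_i \equiv C_m$ on $\mathcal{Y}_j$ and in particular on $\Vp_{\mathcal{Y}_j}$. That is ACR with respect to $x_i$ over $\Vp_{\mathcal{Y}_j}$. Here is where I would instead simply cite Corollary~\ref{cor:alg_lACR} and Proposition~\ref{prop:degVSsing} as the paper indicates, since those presumably package exactly the density/prime-ideal argument (relating degenerate points to singular points, and ensuring the real positive locus of a component with a non-degenerate point is Zariski-dense in that component).

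The main obstacle — and the reason the non-degeneracy hypothesis $\mathcal{C}\subseteq\Snd(g)$ is needed — is precisely the step "$\Vp_{\mathcal{Y}_j}$ is Zariski-dense in $\mathcal{Y}_j$". Without a non-degenerate (equivalently, by Proposition~\ref{prop:degVSsing}, a smooth real) positive point, a component could meet $\R^n_{>0}$ in a lower-dimensional real set (or a single point, or tangentially), on which $x_i$ could be constant without $x_i$ being constant on the whole complex component; then the Zariski closure argument and the transfer to a polynomial identity both fail. Making this density precise is the technical heart, and it is exactly what Section~\ref{sec:backgroundAG} is set up to handle, so in the final writeup I would defer to Corollary~\ref{cor:alg_lACR} and Proposition~\ref{prop:degVSsing} rather than reproving it.
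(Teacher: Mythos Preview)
Your proposal is correct and follows essentially the same route as the paper: reduce to the polynomial case via $\varphi$, handle the backward implication trivially, and for the forward implication use that a component in $\Snd(g)$ has a non-singular positive real point (Proposition~\ref{prop:degVSsing}) so that the real positive locus is Zariski-dense in the complex component (Theorem~\ref{thm:prime}), whence the polynomial $\prod_m(x_i-C_m)$ vanishes on $\mathcal{Y}_j$ and primality forces one factor to vanish. The paper's Proposition~\ref{prop:alg_lACR} phrases this last step slightly differently---it first isolates a connected Euclidean open set of non-singular points on which $x_i$ is already constant, then applies Theorem~\ref{thm:prime}---but this is a cosmetic variation of the same density/primality argument, not a different approach.
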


In the context of Proposition~\ref{prop:alg_lACR_4}, the values that $x_i$ attains in each  irreducible component need not be different.  
We illustrate now Proposition~\ref{prop:alg_lACR_4} with the examples in Section \ref{subsec:def_lACR}. 

All points of $\Vp$ in  Example \ref{ex:complex} are degenerate, so Proposition \ref{prop:alg_lACR_4} (or Theorem~\ref{prop:6regb_4} below) cannot be applied.
For Example \ref{ex:lACR1}, $\Vp$ has non-degenerate  points only for $D(k)>0$. In this case
  $\mathbb{V}_{\mathbb{C}}(\langle x_1x_2(k_1x_1^2-2k_2x_1+k_3)\rangle )$ decomposes as
$$\mathbb{V}_{\mathbb{C}}\big( \langle x_1 \rangle \big) \cup \mathbb{V}_{\mathbb{C}}\big( \langle x_2\rangle \big) \cup \mathbb{V}_{\mathbb{C}}\Big( \langle x_1-\tfrac{k_2}{k_1}+\tfrac{\sqrt{k_2^2-k_1k_3}}{k_1}\rangle \Big) \cup \mathbb{V}_{\mathbb{C}}\Big( \langle x_1-\tfrac{k_2}{k_1}-\tfrac{\sqrt{k_2^2-k_1k_3}}{k_1}\rangle \Big) \mbox{.}$$
Only the last two components have positive points, hence $\Vp$ has two irreducible components, which in fact consist of non-degenerate points. Each of these components, when considered separately, shows ACR with respect to $x_1$, hence the system has local ACR with respect to $x_1$ over $\Vp $ by Proposition~\ref{prop:alg_lACR_4}.
Similarly, the irreducible components of $g$ in Example~\ref{ex:lACR2r} are by definition obtained by applying $\varphi$ to the irreducible components of $\widetilde{g}$  in \eqref{eq:gtildex}.
Proceeding as above, we find that $\mathbb{V}_{\mathbb{C}}(\langle z_1^4 z_2^2 -2z_1^3 z_2^2+  z_2^{2}\rangle )\cap \R^2_{>0}$ has two irreducible components given by the two positive roots of $z_1^4  - 2z_1^3  +  1$. As each component is included in a hyperplane $z_1-C=0$, both $g$ and $\widetilde{g}$ have local ACR by Proposition~\ref{prop:alg_lACR_4}.

\smallskip
These examples are so small that the decomposition of $\Vp$ into irreducible components can be found, but in general this computation is unfeasible. However, the algebraic implications of Proposition \ref{prop:alg_lACR_4}, developed along Section \ref{subsec:irred_deg}, allow us to state the following more practical criterion for detecting local ACR. Its proof can be found in Section~\ref{subsec:criterion_lACR}.

 \begin{Thm}\label{prop:6regb_4} 
Let $g=(g_1,\dots,g_s)$ be a generalized polynomial function in $\R^n_{>0}$ and let $\mathcal{C}\subseteq \Snd(g)$ such that  $\Vp _{\mathcal{C}}\neq \emptyset$. The system $g(x)=0$ has local ACR with respect to $x_i$ over $\Vp _{\mathcal{C}}$ if and only if 
 \begin{equation}\label{eq:rank_dim2_4}
 \rank\Big( \big( \tfrac{\partial g(x^*)}{\partial x}\big)^i \Big)<  s, \qquad \textrm{for all }x^*\in \Vp _{\mathcal{C}}.
  \end{equation}   
\end{Thm}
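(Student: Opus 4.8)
The strategy is to reduce Theorem~\ref{prop:6regb_4} to Proposition~\ref{prop:alg_lACR_4} by showing that the rank condition~\eqref{eq:rank_dim2_4} holding on all of $\Vp_{\mathcal{C}}$ is equivalent to ACR with respect to $x_i$ on each irreducible component $\mathcal{Y}_j\in\mathcal{C}$ separately. First, by the correspondence between generalized polynomials and polynomials established via $\varphi$ in~\eqref{eq:corresp_x_z} and~\eqref{eq:gtilde} (local ACR, zero sensitivity, and the rank of the Jacobian all transfer between $g$ and $\widetilde{g}$ by~\eqref{eq:jacg}), I may assume $g$ is an honest polynomial function. Then I would fix one irreducible component $\mathcal{Y}=\mathcal{Y}_j$ of $\mathbb{V}_{\mathbb{C}}(\langle g_1,\dots,g_s\rangle)$ with a positive non-degenerate point (such exists since $\mathcal{C}\subseteq\Snd(g)$) and argue separately for that component.

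The heart of the argument is the equivalence, on a single such component, between: (a) the rank of $\big(\tfrac{\partial g(x^*)}{\partial x}\big)^i$ being $<s$ at every point $x^*\in\mathcal{Y}\cap\R^n_{>0}$; and (b) the function $x_i$ being constant on $\mathcal{Y}\cap\R^n_{>0}$. For the direction (b)$\Rightarrow$(a): if $x_i\equiv C$ on $\mathcal{Y}\cap\R^n_{>0}$, then the polynomial $x_i-C$ vanishes on a Zariski-dense subset of $\mathcal{Y}$ and hence on all of $\mathcal{Y}$; at a non-degenerate point $x^*$ the tangent space $\ker\big(\tfrac{\partial g(x^*)}{\partial x}\big)$ has dimension $n-s$ and equals the tangent space to $\mathcal{Y}$, so it lies in the hyperplane normal to $e_i$, which forces the $i$-th column of $\tfrac{\partial g(x^*)}{\partial x}$ to be in the span of the others, i.e. $\rank\big(\big(\tfrac{\partial g(x^*)}{\partial x}\big)^i\big)<s$. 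Since having rank $<s$ is a closed condition (vanishing of all $s\times s$ minors) and $\mathcal{Y}\cap\R^n_{>0}$ contains a dense set of non-degenerate points — here I would invoke the density/Zariski-closure facts promised for Section~\ref{subsec:irred_deg}, namely that non-degenerate points form a dense open subset of each component in $\Snd(g)$ — the condition extends to all of $\Vp_{\mathcal{Y}}$. For (a)$\Rightarrow$(b): the rank condition at a non-degenerate point is exactly zero sensitivity in $x_i$ with respect to $S$ by Theorem~\ref{thm:k_fix_rankVSsens0}, so the tangent space to $\mathcal{Y}\cap\R^n_{>0}$ at every non-degenerate point lies in $\{v : v_i=0\}$; hence the gradient of the $i$-th coordinate function, restricted to the smooth locus, vanishes identically, and since $\mathcal{Y}\cap\R^n_{>0}$ is connected (as the real locus of an irreducible variety meeting the positive orthant — again a fact from Section~\ref{subsec:irred_deg}, or one argues on the smooth connected locus which is dense), $x_i$ is constant on it. This gives ACR on $\mathcal{Y}$, and then Proposition~\ref{prop:alg_lACR_4} assembles the components: local ACR over $\Vp_{\mathcal{C}}$ holds iff ACR holds over each $\Vp_{\mathcal{Y}_j}$ iff~\eqref{eq:rank_dim2_4} holds on each $\Vp_{\mathcal{Y}_j}$ iff it holds on all of $\Vp_{\mathcal{C}}$.

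The main obstacle I anticipate is the careful handling of the real-vs-complex and degenerate-vs-smooth bookkeeping: one must know that on an irreducible component possessing a positive non-degenerate point, the non-degenerate positive points are Zariski-dense in that component (so that a polynomial vanishing on them vanishes on the whole component) and that the positive real locus is connected enough to propagate "constant on the smooth part" to "constant everywhere". These are precisely the technical statements deferred to Section~\ref{subsec:irred_deg} and Proposition~\ref{prop:degVSsing}, so in the proof I would cite them rather than reprove them; the logical skeleton above is otherwise routine once those density and connectedness inputs are granted.
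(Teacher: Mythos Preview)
Your overall strategy --- reduce to the polynomial case, work component by component, and characterize ACR on a single component $\mathcal{Y}\in\Snd(g)$ via the tangent-space condition --- matches the paper's route through Proposition~\ref{prop:6reg}. The direction (b)$\Rightarrow$(a) is fine once you cite Theorem~\ref{thm:prime} for the Zariski-density of the positive real points in $\mathcal{Y}$.

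However, there is a genuine gap in your (a)$\Rightarrow$(b). You assert that $\mathcal{Y}\cap\R^n_{>0}$ is connected ``as the real locus of an irreducible variety meeting the positive orthant --- again a fact from Section~\ref{subsec:irred_deg}''. This is not established anywhere in Section~\ref{subsec:irred_deg}, and it is \emph{false} in general: an irreducible complex variety can have a disconnected positive real locus. For instance, the irreducible curve $x_2^2=(x_1-1)(x_1-2)(x_1-3)$ in $\R^2_{>0}$ has two connected components (one with $1<x_1<2$, one with $x_1>3$), and it contains plenty of non-degenerate points. Your fallback ``argue on the smooth connected locus'' has the same problem: the smooth real locus need not be connected either. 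So the step ``gradient of $x_i$ vanishes on the smooth locus $\Rightarrow$ $x_i$ constant on $\mathcal{Y}\cap\R^n_{>0}$'' does not go through as written.

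The paper closes this gap not by connectedness but by Zariski density combined with Theorem~\ref{thm:prime}. Concretely (see the proof of Proposition~\ref{prop:6reg}): pick a single non-singular positive point $x^*\in\mathcal{Y}$, take a small connected Euclidean neighborhood $U$ such that $\mathcal{Y}\cap U$ consists of non-singular points, and use the gradient argument only on $\mathcal{Y}\cap U$ to get $x_i\equiv C$ there. Then Theorem~\ref{thm:prime} (with $\mathcal{P}$ the prime defining $\mathcal{Y}$) says that the polynomial $x_i-C$, vanishing on the real piece $\mathcal{Y}\cap U$, must vanish on all of $\mathcal{Y}$, hence on $\mathcal{Y}\cap\R^n_{>0}$. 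This is the missing idea in your sketch; once you replace the connectedness claim by this local-to-global step via Theorem~\ref{thm:prime}, your outline becomes correct and essentially coincides with the paper's proof.
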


  \smallskip
To better understand why non-degenerate points are required, see Remark \ref{rem:ex_Deg}.
In Example \ref{ex:introCRN}, we have $  \tfrac{\partial g(x)}{\partial x} = (-k_1x_2 \quad -k_1x_1+k_2) $, and $\big( \tfrac{\partial g(x)}{\partial x}\big)^1=\big( -k_1x_1+k_2\big)$   has rank $0$ at all points of $\Vp$, where $x_1=\tfrac{k_2}{k_1}$.
In Example \ref{ex:lACR1}, as $\big( \tfrac{\partial g(x)}{\partial x}\big)^1=\big( k_1x_1^3-2k_1^2+k_3x_1\big) $ has rank $0$ for  $x\in\Vp$, see \eqref{eq:disc}, $g(x)=0$ has local ACR with respect to $x_1$.

\begin{Rem}
Theorem~\ref{prop:6regb_4} gives a criterion for local ACR over $\Vnd(g)$. 
Note that 
\begin{center}
local ACR with respect to $x_i$  \quad $\Rightarrow$ \quad local ACR with respect to $x_i$ over $\Vnd(g)$.
\end{center}
If $\Vnd(g)\neq \emptyset$, then Theorem~\ref{prop:6regb_4} gives a necessary condition for local ACR over $\Vp$. 
If the system  has local ACR with respect to $x_i$ over  $\Vnd (g)$  but not over $\Vp$, then it only fails to have local ACR on the irreducible components of $\Vp$  consisting entirely of degenerate points.
\end{Rem}

\begin{Rem}
If $g=(g_1,\dots,g_n)$ is a generalized polynomial in $\R^n_{>0}$, that is $s=n$,  and $x^*\in \Vp$, then \eqref{eq:rank_dim2_4} holds trivially, as 
  $\tfrac{\partial g(x^*)}{\partial x}$ is an $(n\times n)$-matrix. For $\mathcal{C}\subseteq \Snd(g)$ as in Theorem~\ref{prop:6regb_4}, we have that $\Vp _{\mathcal{C}}$ is finite.  This is in accordance with the observation in Remark~\ref{rem:n=s_lACR}.
\end{Rem}

\section{Local ACR and zero sensitivity}\label{subsec:lACR_via_sens}
Assume that $g$ is a vector of  generalized  polynomials and $\Omega=\R^n_{>0}$ as in the previous section. 
Zero sensitivity refers to a property around every solution to $g(x)=0$, and local ACR to 
the possible values of an entry of the solution set.
Comparing  Theorem~\ref{prop:6regb_4}  with Theorem~\ref{thm:k_fix_rankVSsens0}, both local ACR and zero sensitivity require the same condition on the rank of $g$, but apply to   different sets of points: for local ACR, the condition applies to irreducible components containing a point in $\NDeg(g)$, while for zero sensitivity to points in $\NDeg_S(g)$.    Following previous notation, we denote the set of  irreducible components of $\Vp$ that intersect $\NDeg_S(g)$  by $\SndS$, and let 
\begin{equation}\label{eq:VndS}
\VndS:= \Vp _{\SndS}\subset \Vp.
\end{equation} Recall that $ \SndS\subseteq \Snd(g)$.

Theorem \ref{thm:6_5} below formalizes the relation between local ACR and zero sensitivity for a fixed generalized polynomial system $g(x)=0$. This will in turn derive into a practical criterion for local ACR in Theorem \ref{thm:main_b}, for parametric families $g_{k}$ as in \eqref{eq:powerlaw} arising for example from the study of reaction networks.

\begin{Thm}\label{thm:6_5} 
Let $g=(g_1,\dots,g_s)$ be a generalized polynomial function in $\R^n_{>0}$, $S\subseteq \R^n$ a vector subspace of dimension $s$, and $\mathcal{C}\subseteq \SndS$ be non-empty.
The following are equivalent:
\begin{enumerate}
	\item The system $g(x)=0$ has local ACR with respect to $x_i$ over $\Vp _{\mathcal{C}}$.
	\item The system $g(x)=0$ has zero sensitivity  in $x_i$ with respect to $S$ for all $x^*\in \Vp _{\mathcal{C}}\cap \NDeg_S(g)$. 
	\item 
	$\rank\Big( \big( \tfrac{\partial g(x^*)}{\partial x}\big)^i \Big)<  s$ for all $x^*\in \Vp _{\mathcal{C}}$.
\end{enumerate}

If $\mathcal{C}\subseteq \Snd(g)$, then (1) $\Leftrightarrow$ (3) $\Rightarrow$ (2) still holds. 
\end{Thm}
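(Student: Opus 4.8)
The plan is to prove the equivalence by chaining together the two earlier ``rank criterion'' results and the structural result about irreducible components. Observe first that condition (3) is a purely pointwise statement about the Jacobian on $\Vp_{\mathcal{C}}$, and both (1) and (2) have already been characterized in the excerpt in terms of exactly this rank condition — (1) by Theorem~\ref{prop:6regb_4} and (2) by Theorem~\ref{thm:k_fix_rankVSsens0}. So the work is really just to assemble these, being careful about \emph{which set of points} the rank condition is being asserted on in each case.

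First I would establish (1)$\Leftrightarrow$(3). Since $\mathcal{C}\subseteq\SndS\subseteq\Snd(g)$, every irreducible component in $\mathcal{C}$ contains a positive non-degenerate point, so $\mathcal{C}$ is an admissible choice in Theorem~\ref{prop:6regb_4}, and $\Vp_{\mathcal{C}}\neq\emptyset$ because $\mathcal{C}$ is non-empty (each component is non-empty). Theorem~\ref{prop:6regb_4} then says verbatim that $g(x)=0$ has local ACR with respect to $x_i$ over $\Vp_{\mathcal{C}}$ if and only if $\rank\big(\big(\tfrac{\partial g(x^*)}{\partial x}\big)^i\big)<s$ for all $x^*\in\Vp_{\mathcal{C}}$. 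This gives (1)$\Leftrightarrow$(3) directly, and crucially this implication only used $\mathcal{C}\subseteq\Snd(g)$, which is why the last sentence of the theorem holds. I would make that observation explicitly at the end.

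Next I would prove (3)$\Rightarrow$(2) and (2)$\Rightarrow$(3) under the stronger hypothesis $\mathcal{C}\subseteq\SndS$. For (3)$\Rightarrow$(2): take any $x^*\in\Vp_{\mathcal{C}}\cap\NDeg_S(g)$. By (3) the matrix $\big(\tfrac{\partial g(x^*)}{\partial x}\big)^i$ has rank $<s$, and since $x^*\in\NDeg_S(g)$ the hypotheses of Theorem~\ref{thm:k_fix_rankVSsens0} are met (note $g$ has linearly independent entries, so $S_g=\R^s$ and $\NDeg_S(g)$ makes sense), so that theorem gives zero sensitivity in $x_i$ with respect to $S$ at $x^*$. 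For (2)$\Rightarrow$(3): by hypothesis $\mathcal{C}\subseteq\SndS$, so each component $\mathcal{Y}_j\in\mathcal{C}$ contains at least one point of $\NDeg_S(g)$; call it $y_j$. By (2), the system has zero sensitivity in $x_i$ at $y_j$, so Theorem~\ref{thm:k_fix_rankVSsens0} (applied in the reverse direction) gives $\rank\big(\big(\tfrac{\partial g(y_j)}{\partial x}\big)^i\big)<s$. Now $y_j$ is in particular a positive non-degenerate point of the component $\mathcal{Y}_j$, so $\mathcal{Y}_j\in\Snd(g)$; thus $\mathcal{C}\subseteq\Snd(g)$ and we may invoke the already-established (1)$\Leftrightarrow$(3) — but wait, that would be circular. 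Instead, the clean route is: having (2), we get the rank drop at one non-degenerate point $y_j$ of each component $\mathcal{Y}_j\in\mathcal{C}$; by Theorem~\ref{thm:k_fix_rankVSsens0} this is equivalent to zero sensitivity at $y_j$, which by the argument underlying Proposition~\ref{prop:alg_lACR_4} forces $x_i$ to be constant (equal to some $C_j$) on the whole component $\mathcal{Y}_j\cap\R^n_{>0}$; hence $x_i\in\{C_1,\dots,C_k\}$ on $\Vp_{\mathcal{C}}$, i.e.\ (1) holds, and then (1)$\Rightarrow$(3) via Theorem~\ref{prop:6regb_4}. So the logical skeleton I would actually write is: (1)$\Leftrightarrow$(3) from Theorem~\ref{prop:6regb_4}; (3)$\Rightarrow$(2) from Theorem~\ref{thm:k_fix_rankVSsens0}; and (2)$\Rightarrow$(1) by using Theorem~\ref{thm:k_fix_rankVSsens0} at one non-degenerate-with-respect-to-$S$ point per component together with Proposition~\ref{prop:alg_lACR_4}.

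The main obstacle — really the only subtle point — is bookkeeping the distinction between $\NDeg(g)$ and $\NDeg_S(g)$ and between $\Snd(g)$ and $\SndS$, and making sure the ``one point per component'' argument in (2)$\Rightarrow$(1) is legitimate: zero sensitivity is only assumed at points of $\NDeg_S(g)$, so I genuinely need $\mathcal{C}\subseteq\SndS$ (not merely $\subseteq\Snd(g)$) to have even a single usable point in each component, which is exactly why the weaker conclusion in the final sentence drops (2). I would also note in passing that since $\SndS\subseteq\Snd(g)$ (stated in the excerpt) and a point non-degenerate with respect to $S$ is in particular a point where $\tfrac{\partial g}{\partial x}$ has rank $s$, every component in $\SndS$ does contain a point of $\NDeg(g)$, so all the hypotheses of Theorem~\ref{prop:6regb_4} and Proposition~\ref{prop:alg_lACR_4} are satisfied throughout. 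No new computation is needed; the proof is an assembly argument.
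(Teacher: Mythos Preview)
Your handling of (1)$\Leftrightarrow$(3) and (3)$\Rightarrow$(2) is correct and matches the paper's proof exactly: both are direct applications of Theorem~\ref{prop:6regb_4} and Theorem~\ref{thm:k_fix_rankVSsens0}, and you correctly observe that only $\mathcal{C}\subseteq\Snd(g)$ is needed for these, which justifies the final sentence of the theorem.

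The gap is in your route (2)$\Rightarrow$(1). You pick \emph{one} point $y_j\in\NDeg_S(g)$ per component, obtain the rank drop (equivalently, zero sensitivity) at $y_j$, and then assert that ``the argument underlying Proposition~\ref{prop:alg_lACR_4} forces $x_i$ to be constant on the whole component''. This inference is invalid: zero sensitivity at a single point only says the tangent space at that one point lies in a hyperplane $x_i=c$; it does not control $x_i$ elsewhere on the component. Concretely, for $g(x_1,x_2)=(x_1-1)^2+x_2-1$ the tangent to $\Vp$ at $(1,1)$ is horizontal, so $\rank\big((\partial g/\partial x)^2\big)=0<1$ there, yet $x_2$ is certainly not constant on $\Vp$. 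Proposition~\ref{prop:alg_lACR_4} (and the underlying Proposition~\ref{prop:6reg}) require the rank drop at \emph{all} points of the component, not one.

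The fix --- and this is what the paper does --- is a density argument for (2)$\Rightarrow$(3). From (2), Theorem~\ref{thm:k_fix_rankVSsens0} gives the rank drop at \emph{every} point of $\Vp_{\mathcal{C}}\cap\NDeg_S(g)$, not just one per component. The rank condition in (3) is polynomial (vanishing of all $s\times s$ minors of $(\partial g/\partial x)^i$), hence Zariski-closed. Since $\mathcal{C}\subseteq\SndS$, the set $\NDeg_S(g)\cap\Vp_{\mathcal{C}}$ is Zariski-dense in $\Vp_{\mathcal{C}}$ (Remark~\ref{rem:Vclosures}), so by Remark~\ref{rem:Zar_dense} the rank drop propagates to all of $\Vp_{\mathcal{C}}$, giving (3). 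This is precisely where the hypothesis $\mathcal{C}\subseteq\SndS$ (rather than merely $\subseteq\Snd(g)$) is used, and it explains why (2)$\Rightarrow$(3) is dropped in the last sentence. Finally, for $g$ a generalized (non-polynomial) map, one passes to the polynomial $\widetilde{g}$ of \eqref{eq:gtilde} via \eqref{eq:jacg}; you omitted this reduction.
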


\begin{proof}
The equivalence (1) $\Leftrightarrow$ (3)  follows from Theorem~\ref{prop:6regb_4}, as $ \SndS\subseteq \Snd(g)$. The equivalence (3) $\Rightarrow$ (2) holds by Theorem~\ref{thm:k_fix_rankVSsens0}. For the reverse implication, if $g$ is polynomial, 
Theorem~\ref{thm:k_fix_rankVSsens0}  gives that the condition in (3) holds for all points in $\Vp _{\mathcal{C}}\cap \NDeg_S(g)$. 
By  Remark~\ref{rem:Zar_dense}, as  $\NDeg_S(g)$ is dense in $\Vp _{\mathcal{C}}$ by Remark~\ref{rem:Vclosures}, (3) holds as well. If $g$ is not polynomial, then we consider $\widetilde{g}$ from \eqref{eq:gtilde} and the relation to $g$ described around Equation \eqref{eq:jacg}.
\end{proof}

Recall that zero sensitivity is only defined for points in $\NDeg_S(g)$. Hence even if (2) holds for $\mathcal{C}=\SndS$, it could still be the case that (3) does not hold on $\Vnd(g)$ as it could fail for an irreducible component $\mathcal{Y}\in \Snd(g) \setminus \SndS$. 

Observe that if $g$ is not polynomial, statement Theorem \ref{thm:6_5}(3) can be indifferently checked with $g$ or $\widetilde{g}$ in \eqref{eq:jacg}. 
When $g$ is polynomial, we give in Proposition~\ref{thm:6} a criterion, testable using computational algebra software, which is equivalent to  Theorem \ref{thm:6_5}(3).    However, the computational cost will easily make the criterion impractical in the application where  $g$ is parametric. 

When considering a family $g_k$ of the form \eqref{eq:powerlaw}, then Theorem~\ref{thm:6_5} reduces to a simple criterion for zero sensitivity and local ACR for the whole family. 
Specifically, let
$N\in \R^{s\times n}$ of full rank $s$, $B\in \mathbb{Q}_{\geq 0}^{n\times r}$ and $k\in \Rp ^r$, and consider $g_k\colon \R^n\rightarrow \R^s$ defined by
\begin{equation}\label{eq:powerlaw_k}
g_k(x)= N \diag(k) x^B, \qquad x\in  \R^n_{>0}.
\end{equation}
Let $\Vp^k$ be the set of solutions to $g_k(x)=0$ in $\R^n_{>0}$.
 
The following proposition gathers useful results on the family of Jacobian matrices $\tfrac{\partial g_k(x^*)}{\partial x}$ for all $k\in \R^r_{>0}$ and $x\in \Vp^k$, that bypass the problem of explicitly finding  $\Vp^k$. It is based on so-called \emph{convex parameters}, that go back to Clarke \cite{Clarke:1980tz}.

\begin{Prop}\label{prop:checkingrank}
Let $g_k(x)=N \diag(k) x^B$ be as in \eqref{eq:powerlaw_k} and assume $\ker(N)\cap \R^r_{>0}\neq \emptyset$.
\begin{enumerate}[(i)] 
\item  The following statements are equivalent: 
\begin{enumerate}[(1)]
\item $\rank  \big( \tfrac{\partial g_k(x^*)}{\partial x}\big) =s$ for all $k\in \R^r_{>0}$ and  $x^*\in \Vp^k$ (hence $\Vp^k=\Vnd(g_k)$).
\item For all $v\in \mathrm{ker}(N)\cap \R^r_{>0}$, at least one of the ($s\times s$)-minors of the matrix $N\mathrm{diag}(v)B^t$ is different from zero.
\end{enumerate} 

\smallskip
\item The following statements are equivalent for $i\in \{1,\dots,n\}$:
\begin{enumerate}[(1)]
\item $\rank\Big( \big( \tfrac{\partial g_k(x^*)}{\partial x}\big)^i \Big)<  s$ for all $k\in \R^r_{>0}$ and  $x^*\in \Vp^k$.
\item All ($s\times s$)-minors of the matrix $(N\mathrm{diag}(v)B^t)^i$ are zero for all $v\in \mathrm{ker}(N)$.
\end{enumerate}

\smallskip
\item 
Let $S\subseteq \R^n$ be a vector subspace of dimension $s$ and $W$ any matrix of maximal rank such that $S=\ker(W)$. If (ii,1) holds, then for all $v\in \ker(N)$,  $h_i$ divides the polynomial 
\[ p_v(h):=\mathrm{det}\left( \begin{array}{c}N\mathrm{diag}(v)B^t\mathrm{diag}(h)\\ W\end{array}\right)\in \R[h_1,\dots,h_n].\]
\end{enumerate}
\end{Prop}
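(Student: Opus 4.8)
The plan is to prove the three parts in order, using the theory of convex parameters to replace the variable $x^*\in \Vp^k$ by a vector $v\in\ker(N)\cap\R^r_{>0}$. The key elementary observation is that for $g_k(x)=N\diag(k)x^B$ one has, by the chain rule and the identity $\frac{\partial}{\partial x_j}x^{b} = b_j x^{b}/x_j$,
\begin{equation*}
\frac{\partial g_k(x^*)}{\partial x} = N\diag(k)\diag(x^{*B}) B^t \diag(x^*)^{-1}.
\end{equation*}
The rightmost factor $\diag(x^*)^{-1}$ is invertible on $\R^n_{>0}$, so the rank of $\frac{\partial g_k(x^*)}{\partial x}$, and the rank of $\big(\frac{\partial g_k(x^*)}{\partial x}\big)^i$, equal the rank of $N\diag(v)B^t$, resp.\ $(N\diag(v)B^t)^i$, where $v:=\diag(k)\, x^{*B}\in\R^r_{>0}$. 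The point of convex parameters is that, as $k$ ranges over $\R^r_{>0}$ and $x^*$ over $\Vp^k$, the vector $v=\diag(k)x^{*B}$ ranges \emph{exactly} over $\ker(N)\cap\R^r_{>0}$: indeed $g_k(x^*)=Nv=0$ forces $v\in\ker(N)$, and conversely any $v\in\ker(N)\cap\R^r_{>0}$ is of that form (e.g.\ solve $\diag(k)x^{*B}=v$ by choosing $x^*$ freely with $x^{*B}$ positive and setting $k=\diag(x^{*B})^{-1}v$, or cite the standard statement of Clarke's result from \cite{Clarke:1980tz}). This bijection of ranges is the crux; once it is in place, (i) and (ii) are immediate: condition (i.1) says the rank of $N\diag(v)B^t$ is $s$ for all $v$ in the range, which is (i.2) phrased via the non-vanishing of some maximal minor; condition (ii.1) says the rank of $(N\diag(v)B^t)^i$ drops below $s$ for all such $v$, i.e.\ all its $(s\times s)$-minors vanish. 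The only subtlety in (ii) is that the minors of $(N\diag(v)B^t)^i$ are polynomials in $v$ that vanish on the semialgebraic set $\ker(N)\cap\R^r_{>0}$, which has nonempty interior in the linear space $\ker(N)$; hence they vanish on all of $\ker(N)$, giving the clean algebraic statement (ii.2). (The reverse implication (ii.2)$\Rightarrow$(ii.1) is trivial since $\ker(N)\cap\R^r_{>0}\subseteq\ker(N)$.)

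For part (iii), assume (ii.1) and fix $v\in\ker(N)$. Consider
\begin{equation*}
p_v(h)=\det\begin{pmatrix} N\diag(v)B^t\diag(h)\\ W\end{pmatrix},
\end{equation*}
an $(n\times n)$ determinant, polynomial of degree $\le n-s$ in $h$ (each of the top $s$ rows is linear in $h$, the bottom $n-s$ rows are constant). Expand the determinant by multilinearity in the scalars $h_1,\dots,h_n$ appearing through the $\diag(h)$ in the top block: writing $M:=N\diag(v)B^t$, the $\ell$-th column of the top block is $h_\ell M_{\bullet \ell}$, so $p_v(h)=\sum_{L} \big(\prod_{\ell\in L} h_\ell\big)\, D_L$, where $L$ runs over $s$-element subsets of $\{1,\dots,n\}$ (the columns contributed by the top block) and $D_L$ is, up to sign, the product of the $(s\times s)$-minor of $M$ on columns $L$ with the complementary $((n-s)\times(n-s))$-minor of $W$ on columns $\{1,\dots,n\}\setminus L$ — this is the Cauchy–Binet / generalized Laplace expansion along the first $s$ rows. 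Now I want to show $h_i\mid p_v(h)$, equivalently that every monomial $\prod_{\ell\in L}h_\ell$ with nonzero coefficient contains $h_i$. If $i\notin L$, then $L\subseteq\{1,\dots,n\}\setminus\{i\}$, so the $(s\times s)$-minor of $M=N\diag(v)B^t$ on columns $L$ is a maximal minor of $M^i=(N\diag(v)B^t)^i$, which vanishes for all $v\in\ker(N)$ by (ii.1)$\Leftrightarrow$(ii.2). Hence the coefficient $D_L=0$ for every $L$ with $i\notin L$, and therefore $h_i$ divides each surviving monomial, so $h_i\mid p_v(h)$.

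The main obstacle I anticipate is making the convex-parameter step airtight, i.e.\ verifying cleanly that $\{\diag(k)x^{*B}: k\in\R^r_{>0},\ x^*\in\Vp^k\}$ equals $\ker(N)\cap\R^r_{>0}$ (the inclusion "$\subseteq$" is trivial; for "$\supseteq$" one needs that $x\mapsto x^B$ surjects onto $\R^r_{>0}$ when $B$ has the right rank properties, or one simply absorbs everything into $k$ by taking $x^*$ with all coordinates $1$, noting $\Vp^k$ is then re-indexed accordingly — this is exactly the content of Clarke's result and should be cited rather than reproven). The rest is the chain-rule computation of the Jacobian, the semialgebraic density argument (a polynomial vanishing on a full-dimensional subset of a linear space vanishes identically), and the Laplace/Cauchy–Binet bookkeeping in (iii), all of which are routine once the setup is fixed. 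I would present (i) and (ii) together since they use the same reduction, and then (iii) as a short consequence of (ii) via the column expansion.
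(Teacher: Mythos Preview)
Your proof is correct and, for parts (i) and (ii), essentially identical to the paper's: the same Jacobian factorization, the same convex-parameter bijection $\{\diag(k)x^{*B}:k\in\R^r_{>0},\,x^*\in\Vp^k\}=\ker(N)\cap\R^r_{>0}$ (the paper does exactly what you suggest in your ``obstacle'' paragraph, taking $x^*=(1,\dots,1)$ and $k=v$), and the same Zariski-density argument to pass from $\ker(N)\cap\R^r_{>0}$ to $\ker(N)$.

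For part (iii) you use a genuinely different, though equally elementary, expansion: you apply the generalized Laplace expansion along the top $s$ rows, so that $p_v(h)=\sum_{|L|=s}\pm\big(\prod_{\ell\in L}h_\ell\big)\det(M_{[s],L})\det(W_{[n-s],L^c})$, and observe that any $L$ not containing $i$ contributes a maximal minor of $M^i$, which vanishes by (ii). The paper instead expands along the $i$-th \emph{column}: writing the column as $\binom{h_i\tilde u_i}{w_i}$, the cofactors of the entries of $w_i$ are linear combinations of maximal minors of $(N\diag(v)B^t)^i$ and hence vanish, leaving $p_v(h)=h_i\cdot(\text{something})$. Your row-block expansion is arguably cleaner and makes the monomial structure in $h$ transparent; the paper's column expansion is slightly more ad hoc but avoids invoking Cauchy--Binet/Laplace. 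One small slip: $p_v(h)$ has degree $s$ in $h$ (it is homogeneous of degree $s$, one factor $h_\ell$ per column chosen from the top block), not degree $\le n-s$ as you wrote; this does not affect the argument.
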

\begin{proof}
By the form of $g_k$, for $x\in \R^n_{>0}$, we have
\begin{equation*}\label{eq:convex}
\tfrac{\partial g_k(x)}{\partial x} = N \diag( \diag(k)x^B ) B^t \diag(\tfrac{1}{x}),
\end{equation*}
where $\tfrac{1}{x}$ is taken component-wise.
As all entries of $\tfrac{1}{x}$ are positive, the rank of $\frac{\partial g_k(x)}{\partial x}$,  resp. $\big( \frac{\partial g_k(x)}{\partial x} \big)^i$  agrees with the rank of 
$(N \diag(\diag(k) x^B )B^t)$, resp. $(N \diag(\diag(k) x^B )B^t)^i$,
since multiplication by $\mathrm{diag}(\tfrac{1}{x})$ only scales the minors by a positive real number. 
Now  observe that there is an equality of sets
\begin{equation*}\label{eq:Nker}
\{ \diag(k)x^B \mid k\in \R^r_{>0}, x \in \Vp^k \} = \{ v \in \ker(N)\cap \R^r_{>0}\}.
\end{equation*}
The inclusion $\subseteq $ is clear, as $0=g_k(x) = N \diag(k) x^B$ gives that $\diag(k)x^B\in \ker(N)\cap \R^r_{>0}$. 
For the reverse inclusion, given $v\in \ker(N)\cap \R^r_{>0}$, let $k=v$ and $x$ with all entries equal to  $1$.

From this discussion, statement (i) follows, as 
\[\{ \rank  \big(\tfrac{\partial g_k(x^*)}{\partial x}\big): k\in \R^r_{>0}, x^*\in \Vp^k\} = \{ \rank  \big(N\mathrm{diag}(v)B^t\big) : v\in \ker(N)\cap \R^r_{>0}\}.\]
For statement (ii), we have analogously that (1) holds if and only if $\rank((N\mathrm{diag}(v)B^t)^i)<s$ for all $v=(v_1,\ldots ,v_r)\in \ker(N)\cap \R^r_{>0}$.
Let $G(v)$ be any minor of size $s$ of $(N\mathrm{diag}(v)B^t)^i$, and  consider it as a polynomial in $v_1,\dots,v_r$.   As $\mathrm{ker}(N)$ is a vector subspace of $\mathbb{R}^r$, hence an algebraic variety, and is the smallest algebraic variety containing $\mathrm{ker}(N)\cap \Rp ^r$, which is non-empty, 
  the polynomial $G(v)$ vanishes on $\ker(N)\cap \R^r_{>0}$ if and only if it does on $\ker(N)$ (see Remark \ref{rem:Zar_dense}). This shows that (1) is equivalent to (2) and concludes the proof of (ii). 

\smallskip
To show (iii), we introduce the following notation for the cofactors of the relevant matrix: 
{\small \[ \Delta _j^i=(-1)^{i+j}\mathrm{det}\left( \left( \begin{array}{c}N\mathrm{diag}(v)B^t\mathrm{diag}(h)\\ W\end{array}\right)_j^i\right).
\] }%
Let $u_i$ denote the $i$-th column of  $N\mathrm{diag}(v)B^t\mathrm{diag}(h)$ and $w_i$   the $i$-th column of $W$. By expanding the determinant along the $i$-th column, we have
\[ p_v(h) = \mathrm{det}\left( \begin{array}{c}N\mathrm{diag}(v)B^t\mathrm{diag}(h)\\ W\end{array}\right) =\left( \Delta _{1}^i,\ldots ,\Delta _{n-d}^i\right) u_i+\left( \Delta _{n-d+1}^i,\ldots ,\Delta _{n}^i\right)  w_i. \]

 Note that $u_i=h_i\tilde{u}_i$, where $\tilde{u}_i$ is the $i$-th column of $N \mathrm{diag}(v)B^t$. Furthermore, observe that for $j=n-d+1,\ldots ,n$,   $\Delta _{j}^i$  is a linear combination of the minors of $N\mathrm{diag}(v)B^t$ not involving column $i$. 
 It follows from the equivalence (1)   $\Leftrightarrow $   (2) that all minors of $N\mathrm{diag}(v)B^t$ not involving column $i$ vanish, 
and hence so do $\Delta _{n-d+1}^i,\ldots ,\Delta _n^i$. Therefore,
\[p_v(h) = h_i\left( \Delta _{1}^i,\ldots ,\Delta _{n-d}^i\right) \tilde{u}_i\mbox{,}\] 
and $h_i$ divides the polynomial in the statement. This concludes the proof of the proposition.
\end{proof}

In order to apply Proposition~\ref{prop:checkingrank}(ii) and (iii) in practice, we parametrize $\ker(N)$ by first finding a basis. Then the relevant minors of $N\mathrm{diag}(v)B^t$ and $p_v(h)$  are polynomials in the new parameters (and $h$), and the conditions can be readily verified. In particular, Proposition~\ref{prop:checkingrank}(ii)(2) holds if all relevant minors are identically zero as polynomials. For Proposition~\ref{prop:checkingrank}(i), we consider minimal generators of the cone 
$\ker(N)\cap \R^r_{>0}$ (cf. \cite{conradi-feliu-mincheva}). See Example \ref{ex:conv_par} for an example.

Combining Proposition~\ref{prop:checkingrank} and Theorem \ref{thm:6_5}, we obtain the following  practical criterion for testing local ACR on the whole parametric family $g_k(x)$.

\begin{Thm}\label{thm:main_b}
Let $g_k(x) = N \diag(k) x^B$ be as in \eqref{eq:powerlaw_k}  and assume $\ker(N)\cap \R^r_{>0}\neq \emptyset$.
Let $S\subseteq \R^n$ be a vector subspace of dimension $s$. Consider the following statements: 
\begin{enumerate}
\item  The system $g_k(x)=0$  has local ACR with respect to $x_i$ over $\Vnd(g_k)$, for all $k\in \Rp ^r$ such that  $\Vnd(g_k)\neq \emptyset$.
\item The system $g_k(x)=0$  has zero sensitivity in $x_i$ with respect to $S$ for all $x^*\in  \NDeg_S(g_k)$ and $k\in \Rp ^r$.
\item All ($s\times s$)-minors of the matrix $(N\mathrm{diag}(v)B^t)^i$ are zero for all $v\in \mathrm{ker}(N)$.
\item For all $v\in \mathrm{ker}(N)\cap \R^r_{>0}$, at least one of the ($s\times s$)-minors of the matrix $N\mathrm{diag}(v)B^t$ is different from zero.
\end{enumerate}
Then  
\begin{itemize}
\item
(1) $\Leftrightarrow$ (3) $\Rightarrow$ (2). 
\item If additionally $\Vnd(g_k)_S=\Vnd(g_k)$ for all $k\in \R^r_{>0}$, then 
(1) $\Leftrightarrow$ (3) $\Leftrightarrow$ (2).
\item If (4) holds, then $\Vp^k=\Vnd(g_k)$, and hence (3) gives a criterion for local ACR over $\Vp^k$.
\end{itemize}
 \end{Thm}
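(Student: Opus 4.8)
The plan is to deduce every implication from the three ingredients already in hand: the fixed-system equivalence between local ACR, zero sensitivity and the rank drop of $\big(\tfrac{\partial g}{\partial x}\big)^i$ (Theorem~\ref{thm:6_5}), the rank criterion for zero sensitivity (Theorem~\ref{thm:k_fix_rankVSsens0}), and the convex-parameter translation (Proposition~\ref{prop:checkingrank}), which replaces the unknown solution sets $\Vp^k$ by the matrices $N\diag(v)B^t$ for $v\in\ker(N)$. Before doing so I would isolate one small lemma: for a fixed $k$ and $x^*\in\Vp^k$, if $\big(\tfrac{\partial g_k(x^*)}{\partial x}\big)^i$ has rank $s$, then $\tfrac{\partial g_k(x^*)}{\partial x}$ also has rank $s$ (deleting a column cannot increase the rank, and $\tfrac{\partial g_k(x^*)}{\partial x}$ has only $s$ rows), so $x^*$ is non-degenerate, the irreducible component of $\Vp^k$ through $x^*$ lies in $\Snd(g_k)$, and hence $x^*\in\Vnd(g_k)$. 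This is the mechanism by which a rank drop that is only assumed over $\Vnd(g_k)$ in statement (1) is actually felt over all of $\Vp^k$, which is the form in which Proposition~\ref{prop:checkingrank}(ii) supplies it.

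With this in hand, $(1)\Leftrightarrow(3)$ goes as follows. By Proposition~\ref{prop:checkingrank}(ii), statement (3) is the same as saying $\rank\big(\big(\tfrac{\partial g_k(x^*)}{\partial x}\big)^i\big)<s$ for all $k\in\Rp^r$ and all $x^*\in\Vp^k$. If (3) holds, this inequality holds in particular on $\Vnd(g_k)$, so for each $k$ with $\Vnd(g_k)\neq\emptyset$ I apply Theorem~\ref{thm:6_5} with $\mathcal{C}=\Snd(g_k)$ (non-empty exactly then) to get local ACR with respect to $x_i$ over $\Vp^k_{\mathcal{C}}=\Vnd(g_k)$; for $k$ with $\Vnd(g_k)=\emptyset$ statement (1) is vacuous. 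If (3) fails, Proposition~\ref{prop:checkingrank}(ii) produces $k$ and $x^*\in\Vp^k$ with $\rank\big(\big(\tfrac{\partial g_k(x^*)}{\partial x}\big)^i\big)=s$; by the lemma $x^*\in\Vnd(g_k)$, and the equivalence $(1)\Leftrightarrow(3)$ of Theorem~\ref{thm:6_5} (with $\mathcal{C}=\Snd(g_k)$) then shows $g_k(x)=0$ fails local ACR over $\Vnd(g_k)$, so (1) fails. The implication $(3)\Rightarrow(2)$ is then immediate: (3) forces the rank drop on all of $\Vp^k$, in particular on $\NDeg_S(g_k)$, and Theorem~\ref{thm:k_fix_rankVSsens0} turns it into zero sensitivity in $x_i$ with respect to $S$ at every such point.

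For the second bullet I only need $(2)\Rightarrow(1)$ under the extra assumption $\Vnd(g_k)_S=\Vnd(g_k)$ for all $k$, which says that every irreducible component of $\Vp^k$ carrying a non-degenerate solution also carries one non-degenerate with respect to $S$. Fixing $k$: if $\NDeg_S(g_k)=\emptyset$, then $\Vnd(g_k)_S=\emptyset$, so $\Vnd(g_k)=\emptyset$ and (1) is vacuous; otherwise I take $\mathcal{C}$ to be the (non-empty) set of irreducible components of $\Vp^k$ meeting $\NDeg_S(g_k)$, note $\Vp^k_{\mathcal{C}}=\Vnd(g_k)_S=\Vnd(g_k)$ and $\NDeg_S(g_k)\subseteq\Vp^k_{\mathcal{C}}$, and observe that the content of (2) for this $k$ is exactly statement (2) of Theorem~\ref{thm:6_5} for this $\mathcal{C}$; that theorem then returns local ACR over $\Vp^k_{\mathcal{C}}=\Vnd(g_k)$, that is, statement (1). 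Combined with $(1)\Leftrightarrow(3)\Rightarrow(2)$ this closes the cycle $(1)\Leftrightarrow(2)\Leftrightarrow(3)$. The third bullet is immediate from Proposition~\ref{prop:checkingrank}(i): (4) is exactly condition (i)(2), hence equivalent to $\rank\big(\tfrac{\partial g_k(x^*)}{\partial x}\big)=s$ for all $k$ and $x^*\in\Vp^k$, that is $\Vp^k=\NDeg(g_k)$; since $\NDeg(g_k)\subseteq\Vnd(g_k)\subseteq\Vp^k$ this forces $\Vp^k=\Vnd(g_k)$, so the criterion (3) for local ACR over $\Vnd(g_k)$ becomes a criterion over the whole of $\Vp^k$.

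I expect the main obstacle to be organizational rather than technical: keeping straight the two degeneracy notions (components carrying a non-degenerate solution versus components carrying a solution non-degenerate with respect to $S$), choosing the correct $\mathcal{C}$ so that $\Vp^k_{\mathcal{C}}$ coincides with $\Vnd(g_k)$ in each place, and disposing of the vacuous cases so that the non-emptiness hypothesis of Theorem~\ref{thm:6_5} is genuinely met. The one load-bearing observation is the small lemma that full rank of $\big(\tfrac{\partial g_k}{\partial x}\big)^i$ forces non-degeneracy --- it is what makes ``over $\Vnd(g_k)$'' and ``over all of $\Vp^k$'' interchangeable, and hence lets Proposition~\ref{prop:checkingrank} be plugged into Theorem~\ref{thm:6_5}.
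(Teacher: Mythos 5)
Your proposal is correct and follows essentially the route the paper intends: the paper gives no separate proof of Theorem~\ref{thm:main_b} beyond the remark that it follows by combining Proposition~\ref{prop:checkingrank} with Theorem~\ref{thm:6_5}, and your argument is exactly that combination, with the small (and correct) observation that full rank of $\big(\tfrac{\partial g_k(x^*)}{\partial x}\big)^i$ forces $x^*\in \Vnd(g_k)$ supplying the bookkeeping between $\Vnd(g_k)$ and $\Vp^k$, and the choices $\mathcal{C}=\Snd(g_k)$, resp. $\mathcal{C}=\SndS(g_k)$, handling the two bullets as the paper's cited results require.
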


 Theorem~\ref{thm:main_b}(3) gives a criterion for local ACR based on the computation of the 
 determinant of a collection of symbolic $(s\times s)$ matrices. 
The computational cost of this is much lower than finding the irreducible components of $\Vp$, which is  impossible for realistic systems. As this type of system arises in the context of chemical reaction networks, where the search for ACR is a relevant and difficult question, we have provided  an easy-to-check necessary condition to address ACR.
 Additionally, in view of Proposition~\ref{prop:checkingrank}, 
  a necessary condition for local ACR  (and hence ACR) under the hypothesis that $\Vnd(g_k)=\Vp^k$, is that  $h_i$ divides the polynomial 
\begin{equation}\label{eq:detv}\mathrm{det}\left( \begin{array}{c}N\mathrm{diag}(v)B^t\mathrm{diag}(h)\\ W\end{array}\right)\in \R[h_1,\dots,h_n],
\end{equation}
for all $v\in \ker(N)$ and \emph{any choice} of matrix $W\in \R^{d\times n}$. If the rows of $W$ and $N$ are not linearly independent, then the criterion holds as well, but is not informative.   
This necessary condition requires the computation of only one determinant of a symbolic matrix and hence provides a strategy to routinely scan parametrized systems of the form \eqref{eq:powerlaw_k} for the existence of ACR or local ACR.

Theorem~\ref{thm:main_b}(4) gives a sufficient criterion for $\Vnd(g_k)=\Vp^k$ to hold. 
Note that this condition holds for all  \emph{injective networks.} These are namely networks where 
the polynomial in \eqref{eq:detv}, now  seen as a polynomial in $\R[v_1,\dots,v_r,h_1,\dots,h_n]$, 
is non-zero and has all coefficients of the same sign  \cite{MullerSigns,W-F_SIAM}. 
Then the polynomial does not vanish at any positive value of the variables, and in particular $N\mathrm{diag}(v)B^t$ has rank $s$ for all $v\in \R^r_{>0}$. From this  Theorem~\ref{thm:main_b}(4) follows. Injective networks are of interest as the existence of multiple steady states is precluded. As finding the rank of $N\mathrm{diag}(v)B^t$
for any $v\in \R^r_{>0}$ does not require the computation of the cone
 $\ker(N)\cap \R^r_{>0}$, it is a good idea to first decide whether a maximal minor of $N\mathrm{diag}(v)B^t$ is a non-zero polynomial with all coefficients of the same sign. See Example~\ref{ex:idh} below.

  \begin{Ex}\label{ex:lACR2}
Example~\ref{ex:lACR1} corresponds to the matrices
{\small \[  N=\begin{pmatrix}
1 & -2 & 1 
\end{pmatrix}, \qquad B=\begin{pmatrix} 3 & 2 & 1 \\ 1 & 1 & 1
\end{pmatrix}. \]}
With $v=(-a+2c,c,a)$ parametrizing $\ker(N)$, we have  $N\mathrm{diag}(v)B^t=( -2a+2c \quad 0)$.
Clearly, removal of the first column causes the rank to drop. Hence, this example has local ACR with respect to $x_1$ over $\Vnd(g_k)$, as already noticed in Example~\ref{ex:lACR1}.

In Example~\ref{ex:lACR2r}, we modified the exponent matrix $B$ of Example~\ref{ex:lACR1}. More generally, by considering an arbitrary rational exponent matrix $B=(b_{ij})\in \Q^{2\times 3}$ we have
{\small 
\begin{align*} N\mathrm{diag}(v)B^t&=\begin{pmatrix} 1 & -2 & 1 \end{pmatrix} \begin{pmatrix} -a+2c & 0 & 0 \\ 0 & c & 0 \\ 0 & 0 & a \end{pmatrix} \begin{pmatrix} b_{11} & b_{21} \\ b_{12} & b_{22} \\ b_{13} & b_{23} \end{pmatrix} \\[5pt]
&= \Big( (b_{13}-b_{11})a + (2b_{11}- 2b_{12})c \qquad (b_{23}-b_{21})a + (2b_{21}- 2b_{22})c \Big) \mbox{.}\end{align*}}
The rank of $\left( N\mathrm{diag}(v)B^t\right)^i$ drops for all $a,c$ if and only if $b_{j1}=b_{j2}=b_{j3}$ with $j=3-i$, in which case the system has local ACR with respect to $x_i$ over $\Vnd(g_k)$ by Theorem \ref{thm:main_b}. 
Example~\ref{ex:lACR1} and Example~\ref{ex:lACR2r} satisfy the first relation. In this example, 
Theorem \ref{thm:main_b}(4) does not hold as expected, as we saw in Example~\ref{ex:lACR1} that for some values of $k$ there is one irreducible component consisting only of degenerate points.

\end{Ex}

\begin{Ex}\label{ex:idh}
Consider the core ACR module of the  IDHKP-IDH system in \emph{E. coli} considered in the seminal paper on ACR \cite[Fig. 3]{S-F}:
\begin{align*}
X_1 + X_2  & \ce{<=>[k_1][k_2]} X_3 \ce{->[k_3]} X_1 + X_4  & X_3 + X_4  & \ce{<=>[k_4][k_5]} X_5 \ce{->[k_6]} X_3 + X_2. 
\end{align*}
With mass-action kinetics, the system has ACR with respect to $x_4$, as shown in \cite{S-F}.
Let us consider power-law kinetics with exponent matrix having rational entries of the form 
 {\small \[  B=\begin{pmatrix} b_{11} & 0 & 0 & 0 & 0 & 0\\ b_{21} & 0 & 0 & 0 & 0 & 0\\ 0 & b_{32} & b_{33} & b_{34} & 0 & 0\\ 0 & 0 & 0 & b_{44} & 0 & 0\\ 0 & 0 & 0 & 0 & b_{55} & b_{56}\\
\end{pmatrix}, \qquad b_{ij}>0.\]}%
Mass-action kinetics corresponds to the case where all non-zero entries of $B$ are equal to $1$.
The stoichiometric matrix has $5$ rows and rank $3$. After removing linear dependencies, the coefficient matrix of $g_k(x)$ is 
{\small \[ N= \begin{pmatrix}
-1 & 1 & 0 & 0 & 0 & 1 \\ 0 & 0 & 1 & -1 & 1 & 0 \\ 0& 0 & 0 & 1 & -1 & -1  
\end{pmatrix} .   \]}

We verify Theorem \ref{thm:main_b}(4). 
We consider any matrix of the form $N\mathrm{diag}(v)B^t$ for $v\in \R^6_{>0}$:
{\small 
\begin{align*}
N\mathrm{diag}(v)B^t & = 
  \left( \begin{array}{ccccc}   -v_1  b_{{11}}&  -v_1 b_{{21}}& v_2 b_{{32}}&0& v_6b_{{56}}\\  
0&0&v_3b_{{33}} - v_4  b_{{34}}&  -v_4
  b_{{44}}&v_5b_{{55}}\\0&0&  v_4 b_{{34}}& v_4 b_{{44}}
  &-v_5b_{{55}}-v_6b_{{56}}\end {array} \right).
\end{align*}
}%
The minor given by the second, third and fourth columns is $ -v_1v_3v_4 b_{21}b_{33} b_{44}$,
which does not vanish for any $v\in \R^6_{>0}$ and $b_{ij}\in \R_{>0}$.
Therefore $\Vp^k=\Vnd(g_k)$ for all $k\in \R^6_{>0}$, that is, all irreducible components of $\Vp^k$ contain only non-degenerate points. Note that we did not need to impose $v\in \ker(N)\cap \mathbb{R}^r_{>0}$ in this case, as the  considered minor does not vanish anywhere in $\mathbb{R}^r_{>0}$.

We now apply Theorem \ref{thm:main_b}(3). To this end,  we consider the basis 
$w_1=(0,0,0,1,1,0),w_2=(1,1,0,0,0,0), w_3=(1,0,1,1,0,1)$ of $\ker(N)$, such that any vector in 
 $\ker(N)$ is of the form $v=a_1 w_1 + a_2w_2+a_3w_3$ with $a_1,a_2,a_3\in \R$. 
Substituting this into $N\mathrm{diag}(v)B^t$, we find that the rank of $N\mathrm{diag}(v)B^t$ drops after removing the fourth column, if and only if  
\begin{equation}\label{eq:b}
b_{33}=b_{34},\quad\textrm{and}\quad b_{55}=b_{56}\mbox{.}
\end{equation}
Hence, by Theorem \ref{thm:main_b}(1), when \eqref{eq:b} holds,  the system has local ACR with respect to $x_4$. As expected, mass-action kinetics fulfils \eqref{eq:b}.  If \eqref{eq:b} does not hold, then the system does not have local ACR with respect to any variable. 

\end{Ex}

\begin{Ex}\label{ex:conv_par}
We consider the family of functions $g_k$ as in \eqref{eq:powerlaw_k} defined by the matrices
{\small \begin{equation*}
N=\left( \begin{array}{cccc}
-1 & 1 & 1 & 0\\
0 & 0 & 1 & -1 
\end{array} \right) \mbox{,\; }
B=\left( \begin{array}{cccc}
1 & 1 & 0 & 1 \\
0 & 0 & 1 & 1 \\
 2& 2 & 0 & 2
\end{array} \right),
\end{equation*}}%
giving
\begin{equation*}\label{eq:conv_par_system}
g_{k,1}(x)=-k_1 x_1x_3^2+k_2x_1x_3^2+k_3x_2,\qquad 
g_{k,2}(x)=k_3x_2-k_4 x_1x_2x_3^2
 \mbox{.}\end{equation*}

We check first that $\Vp^k=\Vnd(g_k)$ holds using Theorem \ref{thm:main_b}(4).
For $v\in \R^4$, we have 
\begin{equation}\label{eq:conv_par_matrix}
N\mathrm{diag}(v)B^t = 
  \left( \begin{array}{ccccc}   v_2-v_1 & v_3 & 2v_2-2v_1 \\ -v_4 & v_3-v_4 & -2v_4 \end {array} \right)
\end{equation}
For $v=(1,2, \tfrac{1}{2},1)$, we have \[ N\mathrm{diag}(v)B^t =\begin{pmatrix} 1 & \tfrac{1}{2} & 2 \\ 
-1 & -\tfrac{1}{2} & -2 \end{pmatrix}, \] which has rank $1$.
Hence it does not hold that $N\mathrm{diag}(v)B^t $ has rank $2$ for all $v\in \R^4_{>0}$. 
However, by Theorem \ref{thm:main_b}(4), only $v\in \ker(N)\cap \R^4_{>0}$ needs to satisfy this condition. We find that
\[ \mathrm{ker}(N)\cap \mathbb{R}^r_{>0}=\langle (1,1,0,0),(1,0,1,1)\rangle \cap \mathbb{R}^r_{>0}=\{ v=(a+b,b,a,a):a,b\in \mathbb{R}_{>0}\}.\]
Evaluation into \eqref{eq:conv_par_matrix} gives
\begin{equation}\label{eq:conv_par_matrix2}
N\mathrm{diag}(v)B^t = 
  \left( \begin{array}{ccccc}   -a & a & -2a \\ -a & 0 & -2a \end {array} \right),
\end{equation}
which has rank $2$ for all $a,b\in \R_{>0}$. Therefore, Theorem \ref{thm:main_b}(4) holds and $\Vp^k=\Vnd(g_k)$.

We now proceed to study whether this system has local ACR. By \eqref{eq:conv_par_matrix2}, we readily see that removal of the second column causes the rank to drop. Hence,  Theorem \ref{thm:main_b}(3) holds, and the system has local ACR with respect to $x_2$ for all choices of $k$ whenever $\Vp^k\neq \emptyset$. 

\smallskip
This system has been chosen for illustration purposes, as it is small enough to compute its solutions. The set 
 $\Vp^k$ is empty when $k_2\geq k_1$ and is described by the relations
 \[ x_1= \tfrac{k_3}{k_4x_3^2}, \qquad x_2 = \tfrac{k_1-k_2}{k_4} \]
when $k_1>k_2$. 
In this case there is ACR. The set $\Vp^k$ is a curve in $x_1,x_3$ that lives in a hyperplane given by $x_2$ constant. 
\end{Ex}

 \smallskip
The converse of Proposition~\ref{prop:checkingrank}(iii) is not true. With the notation introduced in the proof, it could  be the case that $\left( \Delta _{n-d+1}^i,\ldots ,\Delta _{n}^i\right) w_i=0$ and  $\left( \Delta _{n-d+1}^i,\ldots ,\Delta _{n}^i\right)\neq 0$. For example, if the $i$-th column of $W$ is zero,  then $h_i$ divides the polynomial $p_v(h)$, without necessarily having zero sensitivity. This is illustrated in the next example.
 
\smallskip
 
\begin{Ex}\label{ex:drop_rank} 
Consider the system $g_k(x)=k_1x_1x_2-k_2$ with $x\in \R^2_{>0}$ with matrices 
\begin{equation*}
N=\left( \begin{array}{cc}
1 & -1  
\end{array} \right) \mbox{,\; }
B=\left( \begin{array}{cc}
1 & 0  \\
1 & 0 
\end{array} \right).
\end{equation*}
Let $S=\langle (1,0) \rangle$, such that $W=(0 \ 1)$. The elements of the kernel of $N$ are parametrized as 
$(a,a)$.
The polynomial $p_v(h)$ from Proposition~\ref{prop:checkingrank}(iii) becomes
\[ \det\begin{pmatrix}
h_1a  & h_1a \\ 0 & 1 \end{pmatrix} =  h_1a. \]
This polynomial is a multiple of $h_1$. However, 
using Theorem~\ref{thm:main_b}, the minors of the matrix $N\mathrm{diag}(v)B^t = (a\ a)$ not involving column $1$ are not identically zero. Hence, the system does not have zero sensitivity nor local ACR in the variable $x_1$. 

In this case we can easily verify that the system does not have zero sensitivity in $x_1$, nor local ACR, as 
the positive solutions to $g_k(x)=0$ are of the form $x_1=\tfrac{k_2}{k_1x_2}$.
\end{Ex}

\medskip

\section{Local ACR in the context of algebraic geometry }\label{sec:backgroundAG}
The main goal of this section is to prove Theorem~\ref{prop:6regb_4} . We start by considering the case where  $g=(g_1,\dots,g_s)$ is algebraic, that is, each entry is a  polynomial. Afterwards, we will use the construction in \eqref{eq:corresp_x_z} to study generalized polynomials.

By a positive point of $\C^n$, we refer to a real point where all coordinates are positive. 
Assume $g=(g_1,\dots,g_s)$ is polynomial, and consider the  ideal associated with $g$:
\[ \mathcal{I}=\langle g_{1},\ldots ,g_{s}\rangle \subseteq \mathbb{R}[x_1,\ldots ,x_n].\]
Let $\VR (\mathcal{I} )$ and $\VC(\mathcal{I} )$ be the real  and complex algebraic varieties defined by $\mathcal{I}$. Then, as $\Omega=\R^n_{>0}$,  we have 
\[\mathcal{V}=\VR (\mathcal{I}) \cap \Rp ^n = \VC (\mathcal{I}) \cap \Rp ^n \mbox{.}\]
It is not an algebraic variety itself, but a subset of one (and a semi-algebraic variety), and we consider the induced Zariski topology.  The Zariski topology is the usual topology in algebraic geometry, having as closed sets the algebraic varieties. For details about this topology, we refer for instance to \cite{Kemper}.

Local ACR with respect to $x_i$ requires that $\Vp \subseteq \bigcup_{j=1}^\ell \VR (\langle x_i-C_j \rangle )$ for certain $C_1,\dots,C_\ell\in \Rp$. 
On the algebraic counterpart, the existence of $C_1,\dots,C_\ell\in \Rp$ such that $\prod_{j=1}^\ell x_i-C_j\in \IS $ for all $j$ is sufficient for local ACR with respect to $x_i$. However, this is not a necessary condition: for example, the existence of a univariate polynomial $f(x_i)\in \IS $ with positive roots  is also a sufficient condition (see \cite[Lemma 6.3.1, Lemma 6.5.2]{P-M}).

\subsection{Irreducible components and non-degenerate points}\label{subsec:irred_deg}  
In order to deal with this discrepancy, we need to consider a key result from Real Algebraic Geometry. To this end, we  review some results on algebraic varieties, and in particular on singular points. 
See \cite[Sections 4.6, 4.8, 9.6]{CLO} for more details and references to proofs.

The variety $\mathcal{X}=\VC (\IS)$ is irreducible if it cannot be written as the union of two proper algebraic subvarieties. This implies $\IS$ can be chosen to be a prime ideal. In general, there is a unique (up to reordering) minimal decomposition of $\mathcal{X}$ into irreducible varieties $\mathcal{X}=\mathcal{Y}_1\cup \ldots \cup \mathcal{Y}_m$,   called \emph{irreducible components}   \cite[Section 4.6, Thm 4]{CLO}.

Let $d_i\geq n-s$ be the dimension of $\mathcal{Y}_i$ for each $i=1,\ldots ,m$, such that $\mathrm{max}\{ d_1,\ldots ,d_m\} $ is  the dimension of $\mathcal{X}$. For each $x^*\in \mathcal{X}$, we consider the local dimension of $\mathcal{X}$ at $x^*$ given as $\mathrm{dim}_{x^*}(\mathcal{X})=\mathrm{max}\{ \mathrm{dim}(\mathcal{Y}_j) \mid x^*\in \mathcal{Y}_j\} $. 
By \cite[Ch II, \S1.4]{Shaf} and \cite[\S I.7, Corollary 3]{Mum_Red}, 
\begin{equation}\label{eq:sing}
n-s\leq \mathrm{dim}_{x^*}(\mathcal{X})\leq \mathrm{dim}\big(\mathrm{ker}(\tfrac{\partial g(x^*)}{\partial x})\big)
\end{equation}
for all $x^*\in \mathcal{X}$.

\begin{Def}\label{def:sing} Let $\mathbb{I}(\mathcal{X})=\langle h_1,\ldots ,h_{s'}\rangle $ be the  radical ideal defining $\mathcal{X}$. A point $x^*\in \mathcal{X}$ is 
\emph{singular} if 
\[ \rank \big(\tfrac{\partial h(x^*)}{\partial x}\big) < n-\mathrm{dim}_{x^*}(\mathcal{X})\qquad (\ \textrm{equivalenty, }\dim(\ker\big(\tfrac{\partial h(x^*)}{\partial x}\big)) > \mathrm{dim}_{x^*}(\mathcal{X}) \ ).\]
If there is an equality, then the point is \emph{non-singular}. 
\end{Def}

We denote by $\Sing (\mathcal{X})$ the set of singular points of $\mathcal{X}$.
The following proposition gathers well-known facts on $\Sing (\mathcal{X})$, as given in \cite[Section 9.6, Thm 8 and 9]{CLO}. See  \cite[Ch II, \S1.4]{Shaf}.

\begin{Prop}\label{prop:maxdim}
Let $g=(g_1,\dots,g_s)$ be a polynomial function in $\R^n$, $\IS$ the associated ideal and $\mathcal{X}=\mathbb{V}_\C(\IS)$.
\begin{enumerate}[(i)]
\item $\Sing (\mathcal{X})$ is either empty or a proper subvariety of $\mathcal{X}$, and contains no irreducible component. 
\item Points in the intersection of two irreducible components are singular. 
\item If $x^*\in\mathcal{X}$ is such that $s =\rank  \big( \tfrac{\partial g(x^*)}{\partial x}\big)$,
then $x^*$ is non-singular, and further lies in a unique irreducible component of dimension $n-s$. 
\end{enumerate}
\end{Prop}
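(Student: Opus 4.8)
The plan is to prove item (iii) first, since it contains the only genuine computation, then item (ii), and finally to assemble item (i) from these two together with the classical fact that the non-singular locus of an irreducible variety is dense. Throughout I would lean on \cite[Section 9.6, Thms 8 and 9]{CLO} and \cite[Ch.~II, \S1.4]{Shaf} rather than reprove standard statements; the work is mostly in bookkeeping.

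For (iii), fix $x^*\in\mathcal{X}$ with $\rank\big(\tfrac{\partial g(x^*)}{\partial x}\big)=s$. Applying \eqref{eq:sing} directly to $g$ gives $n-s\le\dim_{x^*}(\mathcal{X})\le\dim\ker\big(\tfrac{\partial g(x^*)}{\partial x}\big)=n-s$, so $\dim_{x^*}(\mathcal{X})=n-s$. Next I would compare $\tfrac{\partial g}{\partial x}$ with the Jacobian of a generating set $h=(h_1,\dots,h_{s'})$ of the radical ideal $\mathbb{I}(\mathcal{X})$: since $\IS\subseteq\mathbb{I}(\mathcal{X})$, write $g_i=\sum_j a_{ij}h_j$, differentiate, and evaluate at $x^*$, where every $h_j$ vanishes; this shows each row of $\tfrac{\partial g(x^*)}{\partial x}$ lies in the row span of $\tfrac{\partial h(x^*)}{\partial x}$, so $\rank\big(\tfrac{\partial h(x^*)}{\partial x}\big)\ge s$. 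On the other hand \eqref{eq:sing}, which holds equally for the generating set $h$, gives $\dim\ker\big(\tfrac{\partial h(x^*)}{\partial x}\big)\ge\dim_{x^*}(\mathcal{X})=n-s$, i.e.\ $\rank\big(\tfrac{\partial h(x^*)}{\partial x}\big)\le s$. Hence $\rank\big(\tfrac{\partial h(x^*)}{\partial x}\big)=s=n-\dim_{x^*}(\mathcal{X})$, which is precisely non-singularity in the sense of Definition~\ref{def:sing}. Finally, any component through $x^*$ has dimension between $n-s$ and $\dim_{x^*}(\mathcal{X})=n-s$, hence equal to $n-s$; and there can be only one such component, since a point lying on two of them is singular by (ii), contradicting what we just proved.

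For (ii), the shortest route is algebraic: if $x^*\in\mathcal{Y}_i\cap\mathcal{Y}_j$ with $i\ne j$, the local ring $\mathcal{O}_{\mathcal{X},x^*}$ has two distinct minimal primes, namely those cutting out $\mathcal{Y}_i$ and $\mathcal{Y}_j$ (both components pass through $x^*$), so it is not an integral domain; since regular local rings are domains, $\mathcal{O}_{\mathcal{X},x^*}$ is not regular, i.e.\ $x^*$ is singular. Alternatively one can cite the geometric form of this (a variety is locally irreducible at a non-singular point) from \cite{Shaf}.

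For (i), I would observe that outside the pairwise intersections $\bigcup_{i\ne j}(\mathcal{Y}_i\cap\mathcal{Y}_j)$ the variety $\mathcal{X}$ coincides locally with a single component $\mathcal{Y}_i$, so $\mathbb{I}(\mathcal{X})$ and $\mathbb{I}(\mathcal{Y}_i)$ have the same localization there and the Jacobian test of Definition~\ref{def:sing} gives the same answer for both; combined with (ii) this yields $\Sing(\mathcal{X})=\bigcup_i\Sing(\mathcal{Y}_i)\cup\bigcup_{i\ne j}(\mathcal{Y}_i\cap\mathcal{Y}_j)$. Each $\Sing(\mathcal{Y}_i)$ is a proper closed subvariety of the irreducible $\mathcal{Y}_i$ by \cite[Section 9.6, Thm 8]{CLO}, and each $\mathcal{Y}_i\cap\mathcal{Y}_j$ with $i\ne j$ is a proper closed subvariety of $\mathcal{Y}_i$ by irredundancy of the decomposition; hence $\Sing(\mathcal{X})$ is closed, and its intersection with any fixed component is a finite union of proper closed subsets of an irreducible variety, hence proper. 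In particular $\Sing(\mathcal{X})$ contains no irreducible component, and is therefore either empty or a proper subvariety of $\mathcal{X}$. The step I expect to require the most care is this last reduction — making sure that "non-singular for $\mathcal{X}$" really collapses to "non-singular for the unique component through the point" away from the intersections — together with keeping straight in (iii) that \eqref{eq:sing} must be invoked both for $g$ (for the dimension count) and for the radical generators $h$ (for the non-singularity count).
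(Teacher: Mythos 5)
Your proposal is correct, but it is worth noting that the paper does not prove Proposition~\ref{prop:maxdim} at all: it is stated as a package of well-known facts with citations to \cite[Section 9.6, Thms 8 and 9]{CLO} and \cite[Ch.~II, \S1.4]{Shaf}. What you have done is reconstruct those textbook facts from the paper's own ingredients, and the assembly is sound: the row-span comparison between $\tfrac{\partial g}{\partial x}$ and $\tfrac{\partial h}{\partial x}$ at a point of $\mathcal{X}$ (writing $g_i=\sum_j a_{ij}h_j$ and using $h_j(x^*)=0$) correctly pins $\rank\big(\tfrac{\partial h(x^*)}{\partial x}\big)$ between $s$ and $n-\dim_{x^*}(\mathcal{X})=s$, once you invoke \eqref{eq:sing} both for $g$ and for the radical generators $h$ — and you are right to flag that the upper bound for $h$ does \emph{not} follow from the one for $g$ (the kernel containment goes the wrong way), so it must be taken as the standard Zariski-tangent-space inequality, which is exactly what the cited sources provide. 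Two small points of care: first, your item (ii) silently identifies non-singularity in the sense of Definition~\ref{def:sing} with regularity of the local ring $\mathcal{O}_{\mathcal{X},x^*}$; this is the Jacobian criterion over $\C$ and is fine, but it should be stated (or replaced, as you suggest, by citing the geometric fact that a variety is locally irreducible at a non-singular point). Second, since your proof of (iii) uses (ii) for uniqueness of the component, the logical order should be (ii), then (iii), then (i) — there is no circularity, only a presentational inversion. The step you worried about in (i), that singularity in $\mathcal{X}$ coincides with singularity in $\mathcal{Y}_i$ at points lying on a single component, does hold and can be made explicit by multiplying generators of $\mathbb{I}(\mathcal{Y}_i)$ by a polynomial vanishing on the other components but not at $x^*$, which shows the two Jacobians have the same row span there; with that, your formula for $\Sing(\mathcal{X})$ and the properness/closedness conclusions follow as you argue. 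Compared with the paper, your route costs a page of standard commutative algebra but buys a self-contained argument tied directly to Definition~\ref{def:sing} and \eqref{eq:sing}; the paper's citation-only approach keeps the exposition short at the price of leaving the translation between the textbook statements and Definition~\ref{def:sing} implicit.
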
 

It follows from Proposition~\ref{prop:maxdim}(ii)  that a non-singular point $x^*\in \mathcal{X}$ belongs to a unique irreducible component. The definition of singular point requires the radical of $\mathcal{I}=\langle g_1,\dots,g_s\rangle$, which is not always easy to find. 
Proposition~\ref{prop:maxdim}(iii) allows to bypass the need of finding the radical in order to verify that a point is non-singular. The condition in this case is simply that the point is non-degenerate for the system $g(x)=0$. 
This gives rise to the following proposition, where we denote by $\Deg_\C (g)$ and $\Deg _{S,\C}(g)$   the set of degenerate complex points of the system $g(x)=0$ (resp. with respect to the extension  of a real vector subspace $S$ to $\C^n$).

\begin{Prop}\label{prop:degVSsing}
Let $\mathcal{X}=\VC (\IS)$ be an algebraic variety with $\IS=\langle g_1,\ldots ,g_s\rangle \subseteq \mathbb{R}[x_1,\ldots ,x_n]$, 
and let $S\subseteq \mathbb{R}^n$ be a vector subspace of dimension $s$. 
There is an inclusion of subvarieties of $\mathcal{X}$
\[ \Sing (\mathcal{X}) \subseteq \Deg_\C (g)\subseteq \Deg _{S,\C}(g)\subseteq \mathcal{X}. \]
In particular, if  $x^*\in \mathcal{X}\setminus \Deg_\C(g)$, then $x^*$ is non-singular and  $\mathrm{dim}_{x^*}(\mathcal{X})=n-s$.
Furthermore, if $\mathcal{I}$ is radical, then the first inclusion is an equality.
\end{Prop}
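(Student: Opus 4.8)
The plan is to verify the chain of inclusions one link at a time, using the characterizations of singular and degenerate points already recorded in the excerpt, and then to read off the ``in particular'' and ``furthermore'' clauses as immediate consequences. First I would establish $\Deg_\C(g)\subseteq\Deg_{S,\C}(g)$: this is exactly the inclusion \eqref{eq:inclusion_deg}, now applied over $\C$ with $S$ extended to $\C^n$. Concretely, if $\rank\big(\tfrac{\partial g(x^*)}{\partial x}\big)<s$, then the Jacobian of $F_{T}$ (whose last $d$ rows form the full-rank matrix $W$) also fails to have rank $n$, so $x^*$ is degenerate with respect to $S$; equivalently one uses the reformulation $\ker\big(\tfrac{\partial g(x^*)}{\partial x}\big)\cap S\neq\{0\}$, which is implied by $\dim\ker\big(\tfrac{\partial g(x^*)}{\partial x}\big)>n-s=\dim S$. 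The inclusion $\Deg_{S,\C}(g)\subseteq\mathcal{X}$ is trivial since all these sets are by definition subsets of the solution set.

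The one nontrivial link is $\Sing(\mathcal{X})\subseteq\Deg_\C(g)$, equivalently, every non-degenerate point is non-singular. This is where I would invoke Proposition~\ref{prop:maxdim}(iii): if $x^*\in\mathcal{X}$ satisfies $\rank\big(\tfrac{\partial g(x^*)}{\partial x}\big)=s$, then $x^*$ is non-singular and lies in a unique irreducible component, necessarily of dimension $n-s$. Taking contrapositives, a singular point must be degenerate, which is precisely $\Sing(\mathcal{X})\subseteq\Deg_\C(g)$. I expect this to be the main (and essentially only) obstacle, but it is not really an obstacle at all once Proposition~\ref{prop:maxdim}(iii) is available — the content has been front-loaded into that proposition, whose own proof rests on the standard dimension bound \eqref{eq:sing} together with the fact that $\Sing(\mathcal{X})$ contains no irreducible component. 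The subtlety worth spelling out is that Definition~\ref{def:sing} of singularity refers to the \emph{radical} ideal $\mathbb{I}(\mathcal{X})$, not to $\mathcal{I}=\langle g_1,\dots,g_s\rangle$ itself, so one cannot argue directly with $\tfrac{\partial g}{\partial x}$; Proposition~\ref{prop:maxdim}(iii) is exactly the tool that lets us conclude non-singularity from a rank condition on $\tfrac{\partial g}{\partial x}$ without computing the radical.

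For the ``in particular'' statement: if $x^*\in\mathcal{X}\setminus\Deg_\C(g)$, then by the first inclusion just proved $x^*\notin\Sing(\mathcal{X})$, so $x^*$ is non-singular; and by Proposition~\ref{prop:maxdim}(iii) it lies in a unique irreducible component of dimension $n-s$, whence $\dim_{x^*}(\mathcal{X})=n-s$. For the ``furthermore'' clause, suppose $\mathcal{I}$ is radical, so $\mathcal{I}=\mathbb{I}(\mathcal{X})$ and we may take $g=(g_1,\dots,g_s)$ as the generators in Definition~\ref{def:sing}. Then I would argue the reverse inclusion $\Deg_\C(g)\subseteq\Sing(\mathcal{X})$: if $x^*$ is degenerate, $\rank\big(\tfrac{\partial g(x^*)}{\partial x}\big)<s$, i.e. $\dim\ker\big(\tfrac{\partial g(x^*)}{\partial x}\big)>n-s$; but combining this with the lower bound in \eqref{eq:sing}, $\dim_{x^*}(\mathcal{X})\le\dim\ker\big(\tfrac{\partial g(x^*)}{\partial x}\big)$, does not yet force $x^*$ to be singular, so the cleaner route is: since $\Sing(\mathcal{X})$ contains no irreducible component (Proposition~\ref{prop:maxdim}(i)), its complement meets every component, and on that dense open set the rank of $\tfrac{\partial g}{\partial x}$ equals $n-\dim_{x^*}(\mathcal{X})$; if moreover $\mathcal{X}$ is equidimensional of dimension $n-s$ — which holds here because, by the inclusions already proved and Proposition~\ref{prop:maxdim}(iii), every component has a non-degenerate, hence non-singular, point of local dimension $n-s$ — then generic rank is $s$, and $\rank\big(\tfrac{\partial g(x^*)}{\partial x}\big)<s$ exactly characterizes the singular locus. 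Thus $\Deg_\C(g)=\Sing(\mathcal{X})$, completing the proof.
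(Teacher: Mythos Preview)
Your treatment of the chain of inclusions and of the ``in particular'' clause is correct and follows the paper's proof essentially line for line: $\Deg_\C(g)\subseteq\Deg_{S,\C}(g)$ is \eqref{eq:inclusion_deg} over $\C$, $\Sing(\mathcal{X})\subseteq\Deg_\C(g)$ is the contrapositive of Proposition~\ref{prop:maxdim}(iii), and the local-dimension statement is read off from that same proposition. One small omission: the proposition also asserts that the three sets are \emph{subvarieties} of $\mathcal{X}$, and the paper spends a sentence on this (the degenerate loci are cut out by $g_1,\dots,g_s$ together with the appropriate minors of the Jacobian, and $\Sing(\mathcal{X})$ is a subvariety by Proposition~\ref{prop:maxdim}(i)); you should say a word about it.

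The genuine gap is in your ``furthermore'' argument. You correctly identify that, once $\mathcal{I}$ is radical and $g$ may be used in Definition~\ref{def:sing}, the equality $\Deg_\C(g)=\Sing(\mathcal{X})$ comes down to whether the two thresholds $s$ and $n-\dim_{x^*}(\mathcal{X})$ agree, i.e.\ to equidimensionality of $\mathcal{X}$ in codimension $s$. But your justification of equidimensionality is circular. You claim that ``every component has a non-degenerate, hence non-singular, point'' by the inclusions already proved and Proposition~\ref{prop:maxdim}(iii). Neither yields \emph{existence} of non-degenerate points: Proposition~\ref{prop:maxdim}(iii) is a conditional (if $\rank=s$ then \dots), and the inclusion $\Sing(\mathcal{X})\subseteq\Deg_\C(g)$ runs the wrong way. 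What you would need is that every non-singular point is non-degenerate---precisely the reverse inclusion you are trying to prove. Concretely, for $g_1=x_1$, $g_2=x_1x_2$ in $\R[x_1,x_2]$ one has $\mathcal{I}=\langle x_1,x_1x_2\rangle=\langle x_1\rangle$ radical, $\mathcal{X}=\{x_1=0\}$ smooth (so $\Sing(\mathcal{X})=\emptyset$), yet the Jacobian has rank $1<2=s$ everywhere, so $\Deg_\C(g)=\mathcal{X}$; the unique component has no non-degenerate point and dimension $1\neq n-s=0$.

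The paper does not pass through equidimensionality at all; it argues the radical case in one line ``by Definition~\ref{def:sing} and \eqref{eq:sing}'', i.e.\ by directly comparing the two rank conditions once $g$ generates $\mathbb{I}(\mathcal{X})$. Your detour, as written, cannot be closed.
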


\begin{proof} Proposition~\ref{prop:maxdim}(i) gives that $\Sing (\mathcal{X})$ is an algebraic subvariety. 
The sets $\Deg_\C (g)$ and $\Deg _{S,\C}(g)$ are algebraic subvarieties of $\mathcal{X}$, as they are described by the vanishing of a set of polynomials that includes $g_1,\dots,g_s$. 

The second inclusion follows from the definition, see \eqref{eq:inclusion_deg}. For the first inclusion and $x^*\in \mathcal{X}$, by definition 
we have $x^* \notin \Deg_\C(g)$ if and only if $\dim(\mathrm{ker}(\tfrac{\partial g(x^*)}{\partial x}))=n-s$, and the latter implies $x^*\notin\Sing (\mathcal{X})$ and  $\mathrm{dim}_{x^*}(\mathcal{X})=n-s$ by Proposition~\ref{prop:maxdim}(iii). 
If $\mathcal{I}$ is radical, then $\mathcal{I} = \mathbb{I}(\mathcal{X})$, and   $\Deg_\C(g) =  \Sing (\mathcal{X})$ by Definition~\ref{def:sing} and \eqref{eq:sing}. 
\end{proof}

\begin{Rem}\label{rem:Zar_dense}
If $\mathcal{X}$ is the smallest algebraic variety containing a set $\mathcal{U}\subset \mathbb{C}^n$ (that is, $\mathcal{U}$ is Zariski dense in $\mathcal{X}$), then any polynomial that vanishes at all points of $\mathcal{U}$ also vanishes at all points of $\mathcal{X}$ (see \cite{Kemper}).
\end{Rem}

The following theorem is the key to understand the discrepancy between the geometric and algebraic interpretations of (local) ACR.

\begin{Thm}\label{thm:prime} Let $\mathcal{P}=\langle g_1,\ldots, g_s\rangle \subseteq \mathbb{R}[x_1,\ldots ,x_n]$ be a prime ideal and $\mathcal{X}=\VC (\mathcal{P})$.  If $\mathcal{X}$ has a real non-singular point $x^*$, then any polynomial $q\in \mathbb{R}[x_1,\ldots ,x_n]$  vanishing at all points of $\mathcal{X}_\R$ vanishes also at all points of $\mathcal{X}$ (that is, $q\in \mathcal{P}$).  Moreover, if for an open set $U\subseteq \R^n$, 
$x^*\in U$ and $q$ vanishes at all points of $\mathcal{X}_\R \cap U$, it also vanishes at all points of $\mathcal{X}$.
\end{Thm}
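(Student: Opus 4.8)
The plan is to reduce the statement to a classical fact about the relationship between real and complex points of an irreducible real variety: if $\mathcal{P}$ is prime (hence $\mathcal{X} = \VC(\mathcal{P})$ is irreducible) and $\mathcal{X}$ has a \emph{non-singular} real point, then $\mathcal{X}_\R$ is Zariski dense in $\mathcal{X}$. Granting this density statement, both conclusions follow immediately from Remark~\ref{rem:Zar_dense}: a polynomial $q$ vanishing on a Zariski-dense subset of the irreducible variety $\mathcal{X}$ vanishes on all of $\mathcal{X}$, and since $\mathcal{P}$ is the radical ideal of the irreducible variety $\mathcal{X}$ (being prime, it is radical, and $\VC(\mathcal{P})=\mathcal{X}$ forces $\mathcal{P}=\mathbb{I}(\mathcal{X})$ by the Nullstellensatz), this says exactly $q\in\mathcal{P}$.

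So the real content is the density claim, and I would prove it as follows. Let $x^*$ be a non-singular real point of $\mathcal{X}$, and set $d=n-s=\dim_{x^*}\mathcal{X}$; by Proposition~\ref{prop:degVSsing} (or Definition~\ref{def:sing}) non-singularity at a point of $\VC(\langle g_1,\dots,g_s\rangle)$ with $\mathcal{P}$ radical means $\rank\big(\tfrac{\partial g(x^*)}{\partial x}\big)=s$. Applying the Implicit Function Theorem over $\R$ at $x^*$ to the real-analytic (polynomial) map $g$, we obtain an open neighborhood $U_0\subseteq\R^n$ of $x^*$ such that $\mathcal{X}_\R\cap U_0$ is a $d$-dimensional real-analytic submanifold, in fact the graph of a smooth function over an open subset of a coordinate $d$-plane. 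Now let $q\in\R[x_1,\dots,x_n]$ vanish on $\mathcal{X}_\R\cap U$ for some open $U$ containing $x^*$ (the first assertion is the case $U=\R^n$). Restricting $q$ to the graph chart on $U_0\cap U$ gives a real-analytic function that is identically zero on a nonempty open subset of $\R^d$; hence all its partial derivatives vanish there. Translating this back, $q$ lies in the ideal of functions vanishing to all orders along $\mathcal{X}_\R$ near $x^*$. The cleanest way to finish is to invoke that the local ring of $\mathcal{X}$ at $x^*$ is a regular local ring of dimension $d$ whose completion is a power series ring, and that $\mathcal{X}_\R\cap U_0$, being a $d$-manifold, is Zariski dense in the unique $d$-dimensional component of $\mathcal{X}$ through $x^*$ — which, by irreducibility, is all of $\mathcal{X}$. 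Equivalently, and more elementarily: the ideal in $\mathcal{O}_{\mathcal{X},x^*}$ of elements vanishing on the real manifold germ is zero because that manifold germ is a full-dimensional totally real submanifold of the complex-analytic germ, so any holomorphic (algebraic) function vanishing on it vanishes identically on the complex germ; since $\mathcal{X}$ is irreducible, vanishing on a nonempty Zariski-open subset of $\mathcal{X}$ forces vanishing on $\mathcal{X}$, i.e. $q\in\mathcal{P}$.

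The second (local) assertion is then automatic: the argument above only used that $q$ vanishes on $\mathcal{X}_\R$ in a neighborhood of the single non-singular point $x^*$, so the hypothesis $x^*\in U$ and $q|_{\mathcal{X}_\R\cap U}=0$ is already enough; the conclusion $q\in\mathcal{P}$ then gives vanishing of $q$ on all of $\mathcal{X}$.

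The main obstacle is making precise — at the level of rigor appropriate to this paper — the step "a polynomial vanishing on a full-dimensional real-analytic submanifold near a smooth real point vanishes on the whole complex irreducible component." This is standard in real algebraic geometry (it is essentially the statement that a non-singular real point of an irreducible real variety is a \emph{central} point, and central points are Zariski dense in the real variety, which in turn is Zariski dense in $\mathcal{X}$; see Bochnak--Coste--Roy), but it does require either citing such a reference or spelling out the power-series/identity-theorem argument sketched above. I would handle it by a direct citation to the real-algebraic-geometry literature rather than reproving it, keeping the exposition short and in line with how the rest of Section~\ref{sec:backgroundAG} cites \cite{CLO}, \cite{Shaf}, \cite{Mum_Red}.
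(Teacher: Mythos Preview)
Your proposal is correct and matches the paper's approach almost exactly: the paper's proof of the first assertion is a one-line citation to \cite[Prop.~3.3.16]{BCR} (and \cite[Thm.~5.1]{Sot}), precisely the real-algebraic-geometry reference you anticipated, and your sketch via the Implicit Function Theorem and analytic continuation is the standard way that result is established. For the second assertion the paper argues slightly differently---rather than observing that the local argument already suffices, it notes that $\mathcal{X}_\R\cap U$ (being a nonempty Euclidean-open piece of a Zariski-dense set) is itself Zariski dense in $\mathcal{X}$ and then invokes the first part---but this is a cosmetic difference and your route is equally valid.
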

\begin{proof}
The first part follows from \cite[Prop 3.3.16]{BCR}, see also \cite[Thm 5.1]{Sot}. The second statement follows from the first part together with the fact that  $\mathcal{X}_\R \cap U\neq \emptyset$ is Zariski dense in $\mathcal{X}_\R$. 
\end{proof}

The proof of Theorem~\ref{thm:prime} uses that the intersection of a Zariski dense subset of an affine variety $\mathcal{X}\subseteq \mathbb{C}^n$ and an Euclidean open subset of $\mathbb{C}^n$ is an Euclidean open subset and, if non-empty, Zariski dense in $\mathcal{X}$. We will repeatedly use this fact in what follows.

By Theorem~\ref{thm:prime}, ACR over an irreducible component $\mathbb{V}(\mathcal{P})$ of $\mathcal{X}$ that contains positive non-singular points implies that $x_i-C\in \mathcal{P}$ for some $C>0$ (see also  \cite[Prop 6.5.3]{P-M}).
This is why the results on local ACR here concern the irreducible components with non-degenerate points.

 Recall the definitions  of $\Vp_\mathcal{C}$, $\Snd(g)$, $\SndS$,  $\Vnd (g)$ and $ \VndS$ from equations \eqref{eq:Vc}, \eqref{eq:Vnd} and \eqref{eq:VndS}. We consider now 
 $\Sreg$ to be the set of irreducible components of $\VC (\IS )$ that contain a non-singular positive  point and define $\Vreg:= \Vp_{\Sreg}$. 

 By Proposition \ref{prop:degVSsing} we have
\begin{equation*}\label{eq:inclusionsV}
\SndS\subseteq \Snd(g)  \subseteq   \Sreg \qquad \textrm{and}\qquad \VndS \subseteq \Vnd (g) \subseteq   \Vreg \subseteq \Vp.
\end{equation*}
If  $\Vnd (g)=\Vp$,  then   all irreducible components of $\mathbb{V}_\C(\mathcal{I})$ that meet the positive orthant have dimension $n-s$ and contain some non-singular (real) point (c.f. Proposition~\ref{prop:degVSsing}).
 Theorem~\ref{thm:prime} gives the following proposition on local ACR.

\begin{Prop}\label{prop:alg_lACR}
Let $g=(g_1,\dots,g_s)$ be a polynomial function in $\R^n$, and let $\mathcal{C}\subseteq \Sreg$ be a non-empty subset of irreducible components containing a positive non-singular point.
Let $\mathcal{P}_1,\ldots ,\mathcal{P}_\ell$ be the prime ideals defining the irreducible components in $\mathcal{C}$. 

The system $g(x)=0$ has local ACR  with respect to $x_i$ over $\Vp_{\mathcal{C}}$ if and only if 
there exist $C_1,\dots,C_\ell>0$ such that $x_i-C_j \in \mathcal{P}_j$ for all $j=1,\dots,\ell$.
\end{Prop}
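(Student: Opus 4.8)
The plan is to prove each direction separately, using Theorem~\ref{thm:prime} as the central tool. First I would observe that $\Vp_{\mathcal{C}} = \bigcup_{j=1}^\ell (\mathbb{V}_\C(\mathcal{P}_j)\cap \R^n_{>0})$, since $\mathcal{C}$ consists precisely of the irreducible components whose union (intersected with the positive orthant) defines $\Vp_{\mathcal{C}}$. For the ``if'' direction, suppose $x_i - C_j \in \mathcal{P}_j$ for some $C_j>0$ and all $j$. Then on each $\mathbb{V}_\C(\mathcal{P}_j)$, and in particular on $\mathbb{V}_\C(\mathcal{P}_j)\cap \R^n_{>0}$, the coordinate function $x_i$ is identically equal to $C_j$. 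Hence on $\Vp_{\mathcal{C}}$, $x_i$ takes values in the finite set $\{C_1,\dots,C_\ell\}$, which is exactly local ACR with respect to $x_i$ over $\Vp_{\mathcal{C}}$ by Definition~\ref{def:localACR}.

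For the ``only if'' direction, assume the system has local ACR over $\Vp_{\mathcal{C}}$, so there exist $D_1,\dots,D_m\in\R$ with $x_i \in \{D_1,\dots,D_m\}$ for all $x\in\Vp_{\mathcal{C}}$. Fix one component, say $\mathbb{V}_\C(\mathcal{P}_j)$ with $\mathcal{P}_j$ prime. By hypothesis this component contains a positive non-singular point $x^*$. The polynomial $q(x) := \prod_{t=1}^m (x_i - D_t)$ vanishes at every point of $\mathbb{V}_\C(\mathcal{P}_j)\cap \R^n_{>0}$. Since $\mathbb{V}_\C(\mathcal{P}_j)$ is irreducible with a real non-singular point $x^*$, and $\mathbb{V}_\C(\mathcal{P}_j)\cap\R^n_{>0}$ is a non-empty (it contains $x^*$) Zariski-dense subset of $\mathbb{V}_\C(\mathcal{P}_j)_\R$, Theorem~\ref{thm:prime} applies (with $U = \R^n_{>0}$): $q$ vanishes at all points of $\mathbb{V}_\C(\mathcal{P}_j)$, i.e. $q\in\mathcal{P}_j$. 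As $\mathcal{P}_j$ is prime, one of the factors $x_i - D_t$ must lie in $\mathcal{P}_j$; set $C_j := D_t$. It remains to check $C_j > 0$: this follows because $C_j = x^*_i$ and $x^*$ is a positive point. Doing this for every $j=1,\dots,\ell$ produces the required constants.

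The main obstacle, and the only genuinely non-formal step, is the invocation of Theorem~\ref{thm:prime} to pass from vanishing on the real positive points to membership in the prime ideal $\mathcal{P}_j$; this is the real-algebraic-geometry input (via \cite[Prop 3.3.16]{BCR}) that the non-singularity hypothesis on the components in $\mathcal{C}$ is there to supply, and the reason the statement cannot be relaxed to arbitrary irreducible components. Everything else — the decomposition of $\Vp_{\mathcal{C}}$ along its components, the use of primality to split the product $q$ into a single linear factor, and the positivity bookkeeping $C_j = x^*_i > 0$ — is routine. One minor point to state carefully is that $\mathbb{V}_\C(\mathcal{P}_j)\cap\R^n_{>0}\neq\emptyset$ precisely because $\mathcal{C}\subseteq\Sreg$, which guarantees a positive non-singular point in each selected component; this is what makes the density argument in Theorem~\ref{thm:prime} (second statement, with the open set $U$) available.
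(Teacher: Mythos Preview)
Your proof is correct and follows the same overall strategy as the paper's, with Theorem~\ref{thm:prime} as the key real-algebraic-geometry input in both cases. The one noteworthy difference is in how a single constant $C_j$ is isolated for each component in the ``only if'' direction: the paper first passes to a connected Euclidean-open neighborhood $U$ of a non-singular positive point consisting entirely of non-singular points, argues by connectedness that $x_i$ must equal a single $C_m$ on $U$, and then applies Theorem~\ref{thm:prime} to the linear polynomial $x_i-C_m$; you instead apply Theorem~\ref{thm:prime} directly to the product $q=\prod_t(x_i-D_t)$ and then use primality of $\mathcal{P}_j$ to extract a linear factor. Your route is slightly more algebraic and avoids the connectedness step, while the paper's route avoids the factorization-in-a-prime-ideal step; both are short and essentially equivalent. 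One small point you leave implicit (as does the paper) is that a positive non-singular point of $\VC(\mathcal{I})$ lying in $\mathcal{Y}_j$ is automatically non-singular as a point of the irreducible variety $\mathcal{Y}_j=\VC(\mathcal{P}_j)$, which is what Theorem~\ref{thm:prime} actually requires; this is immediate since a non-singular point of $\VC(\mathcal{I})$ lies in a unique component and the two varieties agree locally there.
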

\begin{proof}
The reverse implication is direct, as every point $x^*\in \Vp_{\mathcal{C}}$ belongs to 
(at least) one   $\mathbb{V}(\mathcal{P}_j)$. 
For the forward implication, let $C_1,\dots,C_{\ell'}>0$ be the possible different values of $x_i$ for $x\in \Vp_{\mathcal{C}}$. By hypothesis $\Vp_{\mathcal{C}} \subseteq \sqcup_{j=1}^{\ell'} \mathbb{V}(\langle x_i-C_j\rangle)$. 
Fix $j\in \{1,\dots,\ell\}$ and choose 
a non-singular point  $x^*\in \mathbb{V}(\mathcal{P}_j)\cap \R^n_{>0}$.   
As the set of singular points of $\mathbb{V}(\langle g_1,\dots,g_s\rangle)$ is Zariski closed, there exists a connected  (Euclidian) open set $U\subseteq
\mathbb{V}(\mathcal{P}_j)\cap \R^n_{>0}$ containing $x^*$ and consisting only of non-singular points.  Hence   $U\subseteq \mathbb{V}(\langle x_i-C_m\rangle)$ for some $m\in \{1,\dots,\ell'\}$. 
By Theorem~\ref{thm:prime}, $\mathbb{V}(\mathcal{P}_j)\subseteq \mathbb{V}(\langle x_i-C_m\rangle)$ and hence, as $\mathcal{P}_j$ and $\langle x_i-C_m\rangle$  are prime, $\langle x_i-C_m\rangle\subseteq \mathcal{P}_j$. This concludes the proof. 
\end{proof} 

 For $\mathcal{I}$ prime, $\mathbb{V}_\C(\mathcal{I})$ is irreducible and Proposition \ref{prop:alg_lACR} is a rephrasing of \cite[Prop 6.5.3]{P-M}, and local ACR is equivalent to ACR. 
A direct consequence of Proposition~\ref{prop:alg_lACR}, gives the following equivalence between ACR and local ACR on irreducible components (which is Proposition~\ref{prop:alg_lACR_4} for polynomials). 

\begin{Cor}\label{cor:alg_lACR}
Let $g=(g_1,\dots,g_s)$ be a polynomial function in $\R^n$, and let $\mathcal{Y}\subseteq \Sreg$ be an  irreducible component containing a positive non-singular point.
 
The system  $g(x)=0$ has local ACR  with respect to $x_i$ over $\mathcal{Y}$ if and only if it has ACR 
  with respect to $x_i$ over $\mathcal{Y}$.
 \end{Cor}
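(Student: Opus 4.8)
The plan is to deduce the statement directly from Proposition~\ref{prop:alg_lACR}, applied to the singleton family $\mathcal{C}=\{\mathcal{Y}\}$. First, the implication from ACR to local ACR is immediate: it is the first assertion of Proposition~\ref{Prop:localACR-ACR}, or simply the observation that taking a single value is a special case of taking finitely many values. So the only content is the converse.

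For the converse, suppose the system $g(x)=0$ has local ACR with respect to $x_i$ over $\Vp_{\mathcal{Y}}=\mathcal{Y}\cap\R^n_{>0}$. Since by hypothesis $\mathcal{Y}\in\Sreg$ contains a positive non-singular point, the hypotheses of Proposition~\ref{prop:alg_lACR} are met for $\mathcal{C}=\{\mathcal{Y}\}$; in that setting $\ell=1$ and $\mathcal{P}_1=\mathcal{P}$ is the unique prime ideal defining $\mathcal{Y}$. Proposition~\ref{prop:alg_lACR} then provides a single constant $C>0$ with $x_i-C\in\mathcal{P}$. Hence $\mathcal{Y}=\VC(\mathcal{P})\subseteq\VC(\langle x_i-C\rangle)$, so $x_i=C$ at every point of $\mathcal{Y}$, and in particular at every point of $\Vp_{\mathcal{Y}}$. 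This is precisely ACR with respect to $x_i$ over $\Vp_{\mathcal{Y}}$, completing the equivalence.

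There is no genuine obstacle here: the corollary is a direct specialization of Proposition~\ref{prop:alg_lACR}. The one point worth making explicit is that restricting to a single irreducible component collapses the list $C_1,\dots,C_\ell$ of candidate values supplied by Proposition~\ref{prop:alg_lACR} down to one constant, and it is exactly this collapse that upgrades local ACR to ACR on the component. (Implicitly this uses Theorem~\ref{thm:prime} through Proposition~\ref{prop:alg_lACR}: the presence of a non-singular positive point forces any polynomial — here $x_i-C$ — vanishing on the real points of $\mathcal{Y}$ to lie in the prime $\mathcal{P}$, hence to vanish on all of $\mathcal{Y}$.)
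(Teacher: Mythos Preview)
Your proof is correct and matches the paper's own treatment: the paper presents this corollary without a separate proof, stating only that it is ``a direct consequence of Proposition~\ref{prop:alg_lACR}'', which is exactly the specialization to $\mathcal{C}=\{\mathcal{Y}\}$ that you carry out.
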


The condition $x_i-C_j \in \mathcal{P}_j$ for all $j=1,\dots,\ell$ is equivalent to $\prod_{j=1}^\ell 
(x_i-C_j) \in \bigcap_{j=1}^\ell \mathcal{P}_j$.

\begin{Rem}\label{rem:Vclosures}
In algebraic-geometric terms, Proposition~\ref{prop:degVSsing}  gives that 
the set of irreducible components of $\VC (\IS )$ that have regular points is precisely the Zariski closure of the Zariski open set $ \VC (\IS )\setminus \mathrm{Sing}( \mathcal{X})$. Intersecting further with $\R^n_{>0}$, we obtain $\Vreg$. 
Similarly, by letting the overline denote the Zariski closure, in view of Proposition~\ref{prop:degVSsing} we have:  
\begin{align*}
\Vreg & =\overline{\left( \VC (\IS )\setminus \mathrm{Sing}( \mathcal{X}))\right)\cap \mathbb{R}^n_{>0} } \cap \mathbb{R}^n_{>0} \mbox{,}\\
\Vnd (g) & =\overline{\left( \VC (\IS )\setminus \Deg (g)\right)\cap \mathbb{R}^n_{>0} } \cap \mathbb{R}^n_{>0} \mbox{,}\\
\VndS & =\overline{\left( \VC (\IS )\setminus \mathrm{Deg}_S(g)\right)\cap \mathbb{R}^n_{>0} } \cap \mathbb{R}^n_{>0} \mbox{.}
\end{align*}  
In particular,  $\Vreg\setminus  \mathrm{Sing}( \mathcal{X})$,  $\Vnd (g)\setminus \Deg(g)$ and $\VndS (g)\setminus \Deg_S(g)$
are dense Zariski  open sets of $\Vreg$, $\Vnd (g)$ and  $\VndS$ respectively.  
 \end{Rem}

\begin{Rem}
For a generic vector subspace $S$ of dimension $s$, we  have $\VndS = \Vnd (g)$. To see this, 
choose a positive non-singular point $x(i)^*$ for each irreducible component $\mathcal{Y}_i \in \Snd(g)$. Consider the $(n-s)$-dimensional tangent space $T_i$ at  $x(i)^*$. Then, for any vector subspace $S$ transversal to all $T_i$, we have $\VndS = \Vnd (g)$. 
\end{Rem}

\begin{Rem}\label{rem:ex_Deg}
It might be the case that $\Vreg(g)=\emptyset$ or $\Vnd(g)=\emptyset$, even if $\VC (\IS )$ has  a non-degenerate complex solution to $g(x)=0$. 
This occurs in Example \ref{ex:complex}, as $\VC (\IS )=\mathcal{X}_1\cup \mathcal{X}_2$ is an affine variety of dimension $1$, where $\mathcal{X}_1=\VC (\langle x_3-1,(x_1^2+x_2^2-1)^2\rangle )$ and $\mathcal{X}_2=\VC (\langle x_1^2-x_2,(x_1^2+x_2^2-1)^2+(x_3-1)^2\rangle )$.
The only intersection of $\mathcal{X}_2$ and $\R^3$ is contained in $\mathcal{X}_1$ and singular. The Jacobian matrix of $g_1,g_2$ vanishes at all points of $\mathcal{X}_1$, so all real points are   degenerate.
In this case, $\Vnd(g) = \emptyset \subsetneq \Vp $. 
This phenomenon can occur even if all irreducible components of $\VC (\IS )$ have some non-degenerate complex solution: for $g(x)=(x_1^2+x_2^2-1)^2+(x_3-1)^2$, the Jacobian of $g$ does not vanish at all points of $\VC(\mathcal{I})$,  but vanishes at all real points.
\end{Rem}

\subsection{A criterion for local ACR over $\Vreg$.}\label{subsec:criterion_lACR}

In this section we prove Theorem~\ref{prop:6regb_4} and provide a criterion to check Theorem \ref{thm:6_5}(3) when $g$ is polynomial.

\begin{Prop}\label{prop:6reg} 
Let $g=(g_1,\dots,g_s)$ be a polynomial function in $\R^n$, $\mathcal{X}= \mathbb{V}_\C(\langle g_1,\dots,g_s \rangle)$ and write the radical ideal defining $\mathcal{X}$ as $\mathbb{I}(\mathcal{X})=\langle h_1,\ldots ,h_{s'}\rangle$.
Consider  $\mathcal{Y}\in \Sreg$ such that $\mathcal{Y}\cap \R^n_{>0}\neq \emptyset$.  

System $g(x)=0$ has (local) ACR with respect to $x_i$ over $\mathcal{Y}\cap \R^n_{>0}$ if and only if 
\begin{equation}\label{eq:rank_dim}
 \rank\Big( \big( \tfrac{\partial h(x^*)}{\partial x}\big)^i \Big)<  n-\mathrm{dim}(\mathcal{Y})
 \end{equation} for all $x^*\in \mathcal{Y}\cap \R^n_{>0}$.  
\end{Prop}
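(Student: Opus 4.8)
The plan is to reduce the statement to an application of Theorem~\ref{thm:prime} applied to the prime ideal $\mathcal{P}$ defining the irreducible component $\mathcal{Y}$, exploiting that $\mathcal{Y}$ has a non-singular positive point. By Corollary~\ref{cor:alg_lACR} (or directly since $\mathcal{Y}$ is irreducible, using Proposition~\ref{Prop:localACR-ACR}), local ACR and ACR over $\mathcal{Y}\cap\R^n_{>0}$ are equivalent, so I only need to characterise ACR.

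First I would fix a non-singular positive point $x^*\in\mathcal{Y}\cap\R^n_{>0}$ and recall that, by Definition~\ref{def:sing}, non-singularity at a point $y$ of $\mathcal{Y}$ is equivalent to $\rank\big(\tfrac{\partial h(y)}{\partial x}\big)=n-\dim(\mathcal{Y})$, which is the maximal possible value by \eqref{eq:sing}; moreover the non-singular locus of $\mathcal{Y}$ is Zariski dense in $\mathcal{Y}$ by Proposition~\ref{prop:maxdim}(i). The key local fact is that at a non-singular point $y$, the tangent space $T_y\mathcal{Y}$ equals $\ker\big(\tfrac{\partial h(y)}{\partial x}\big)$, and $\mathcal{Y}$ is locally (in the Euclidean topology) a smooth real manifold of dimension $\dim(\mathcal{Y})$ through $y$ whenever $y$ is a real non-singular point lying in a neighbourhood of non-singular points — this is the content of the implicit function theorem applied to a suitable subset of the $h_j$.

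For the forward direction, suppose $g(x)=0$ has ACR with respect to $x_i$ over $\mathcal{Y}\cap\R^n_{>0}$, so $x_i\equiv C$ there for some $C>0$. Then at every positive non-singular point $y$ the curve $t\mapsto y+t\,\xi$ for $\xi\in T_y\mathcal{Y}$ stays (to first order) in $\mathcal{Y}\cap\R^n_{>0}$, hence $\xi_i=0$; thus $T_y\mathcal{Y}=\ker\big(\tfrac{\partial h(y)}{\partial x}\big)$ is contained in the hyperplane $\{x_i=0\}$, equivalently $e_i\notin\ker\big(\tfrac{\partial h(y)}{\partial x}\big)^\perp{}^\perp$... more cleanly: the $i$-th coordinate functional is nonzero on the kernel only if $e_i$ is not in the row span, so removing the $i$-th column of $\tfrac{\partial h(y)}{\partial x}$ does not drop the rank would be the wrong way — instead, $\ker\subseteq\{x_i=0\}$ means every kernel vector has a zero in position $i$, which forces $\rank\big(\big(\tfrac{\partial h(y)}{\partial x}\big)^i\big)=\rank\big(\tfrac{\partial h(y)}{\partial x}\big)=n-\dim(\mathcal{Y})$; wait, that is the opposite of \eqref{eq:rank_dim}. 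Let me reconsider: if $\ker\subseteq\{x_i=0\}$ then deleting column $i$ preserves the kernel dimension, so the rank of the $(n-1)$-column matrix is still $n-\dim(\mathcal{Y})$ but now with $n-1$ columns — this does not immediately give \eqref{eq:rank_dim}. The correct reading is that ACR should correspond to $x_i - C \in \mathcal{P}$, i.e. the differential $dx_i$ at $y$ lies in the row span of $\tfrac{\partial h(y)}{\partial x}$; this is exactly the statement that $\rank\big(\big(\tfrac{\partial h(y)}{\partial x}\big)^i\big) < \rank\big(\tfrac{\partial h(y)}{\partial x}\big) = n-\dim(\mathcal{Y})$, since adjoining a row $e_i^t$ (which is $d(x_i-C)$) does not increase the rank, so $e_i^t$ must lie in the span of the rows, forcing the column-deleted matrix to have strictly smaller rank (the deleted column being linearly dependent given the $e_i^t$ row). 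So the forward direction: from $x_i-C\in\mathcal{P}$ (justified because at a non-singular point the conormal/tangent relation and Theorem~\ref{thm:prime} give $x_i - C\in \mathbb{I}(\mathcal{Y})=\mathcal{P}$, since $x_i-C$ vanishes on $\mathcal{Y}_\R\cap\R^n_{>0}$ which is Zariski dense in $\mathcal{Y}_\R$, hence on $\mathcal{Y}$), differentiating $x_i-C$ along $\mathcal{Y}$ shows $e_i^t$ is in the Jacobian row span at every point of $\mathcal{Y}$ (not just non-singular ones — this uses that $\mathbb{I}(\mathcal{Y})$ is generated by $h_1,\dots,h_{s'}$ and $x_i-C$ is in the ideal, so $e_i^t$ is a polynomial combination of the rows), giving \eqref{eq:rank_dim} everywhere on $\mathcal{Y}$, in particular on $\mathcal{Y}\cap\R^n_{>0}$.

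For the converse, assume \eqref{eq:rank_dim} holds for all $x^*\in\mathcal{Y}\cap\R^n_{>0}$. Pick a non-singular positive point $x^*$; near it $\mathcal{Y}$ is a smooth manifold of dimension $\dim(\mathcal{Y})$, and \eqref{eq:rank_dim} says that the column-deleted Jacobian has rank $< n-\dim(\mathcal{Y}) = \rank\big(\tfrac{\partial h(x^*)}{\partial x}\big)$, so the $i$-th column is not in the span of the others, equivalently $e_i^t$ is in the row span, equivalently the linear functional $\xi\mapsto\xi_i$ vanishes on $T_{x^*}\mathcal{Y}=\ker\big(\tfrac{\partial h(x^*)}{\partial x}\big)$. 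Since this holds on the connected open manifold neighbourhood $U$ of non-singular positive points, $x_i$ is constant on $U$, say equal to $C$; then $x_i-C$ vanishes on $U$, which is Zariski dense in $\mathcal{Y}_\R$ (Theorem~\ref{thm:prime}'s open-set version, noting $\mathcal{P}$ is prime and $x^*$ is a real non-singular point), hence $x_i-C\in\mathcal{P}$, so $x_i\equiv C$ on all of $\mathcal{Y}\cap\R^n_{>0}$, i.e. ACR holds.

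The main obstacle I anticipate is handling the distinction between the possibly non-radical ideal $\langle g_1,\dots,g_s\rangle$ appearing in the hypothesis (via $\Sreg$ and $\mathcal{X}$) and the radical ideal $\mathbb{I}(\mathcal{X})=\langle h_1,\dots,h_{s'}\rangle$ whose Jacobian appears in \eqref{eq:rank_dim}: the rank condition must be phrased in terms of the $h_j$ precisely because $\dim(\mathcal{Y})$ and singularity are defined through $\mathbb{I}(\mathcal{X})$, and one must be careful that "non-singular" is exactly the rank-$(n-\dim\mathcal{Y})$ condition on $\tfrac{\partial h}{\partial x}$. A secondary subtlety is making the "tangent space $=$ kernel of the Jacobian at a non-singular point, and $\mathcal{Y}$ is locally a real manifold there" precise over $\R$ — this requires that non-singular \emph{real} points admit a Euclidean neighbourhood in $\mathcal{Y}_\R$ that is a smooth submanifold, which follows from the implicit function theorem once $s'' := n-\dim(\mathcal{Y})$ of the $h_j$ are selected whose Jacobian has full rank $s''$ at $x^*$, together with the fact (already invoked in the proof of Proposition~\ref{prop:alg_lACR}) that non-singular points form a Zariski open, hence Euclidean open, dense subset.
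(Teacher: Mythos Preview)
Your overall strategy and your reverse direction match the paper's proof closely. The genuine gap is in the forward direction, where you conflate $\mathbb{I}(\mathcal{Y})=\mathcal{P}$ with $\mathbb{I}(\mathcal{X})=\langle h_1,\dots,h_{s'}\rangle$. You write that ``$\mathbb{I}(\mathcal{Y})$ is generated by $h_1,\dots,h_{s'}$'' and conclude that $x_i-C\in\mathcal{P}$ expresses $x_i-C$ as a polynomial combination of the $h_j$, hence $e_i$ lies in the row span of $\tfrac{\partial h}{\partial x}$ at every point. But $\langle h_1,\dots,h_{s'}\rangle=\mathbb{I}(\mathcal{X})\subsetneq\mathcal{P}$ whenever $\mathcal{X}$ has other irreducible components (the typical situation for local ACR), so $x_i-C\in\mathcal{P}$ does \emph{not} give $x_i-C\in\langle h_1,\dots,h_{s'}\rangle$, and your differentiation step is unjustified.

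The paper fixes exactly this: writing $\mathcal{P}=\langle f_1,\dots,f_r\rangle$, from $x_i-C\in\mathcal{P}$ one gets $e_i\in\mathrm{rowspan}\big(\tfrac{\partial f(x^*)}{\partial x}\big)$ for all $x^*\in\mathcal{Y}$. At a point $x^*\in\mathcal{Y}$ that is non-singular \emph{in} $\mathcal{X}$, the tangent spaces $T_{x^*}\mathcal{Y}$ and $T_{x^*}\mathcal{X}$ agree (such a point lies in a unique component, Proposition~\ref{prop:maxdim}(ii)), so $\mathrm{rowspan}\big(\tfrac{\partial f(x^*)}{\partial x}\big)=\mathrm{rowspan}\big(\tfrac{\partial h(x^*)}{\partial x}\big)$ and the rank condition \eqref{eq:rank_dim} follows at $x^*$. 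One then passes to all of $\mathcal{Y}$ because \eqref{eq:rank_dim} is a polynomial (Zariski closed) condition and the non-singular locus is Zariski dense in $\mathcal{Y}$. Your ``main obstacle'' paragraph flags the $g$ vs.\ $h$ distinction but not the $h$ vs.\ $f$ (i.e.\ $\mathbb{I}(\mathcal{X})$ vs.\ $\mathcal{P}$) distinction, which is where the argument actually breaks.
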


\begin{proof}
Write  $\mathcal{Y}=\VC (\mathcal{P})$ with $\mathcal{P}$ prime. As $\mathcal{Y}$ contains a non-singular positive point,  the system has ACR with respect to $x_i$ over $\mathcal{Y}$ if and only if  $x_i-C \in \mathcal{P}$ for some  $C\in \Rp $ by Proposition \ref{prop:alg_lACR}.
Hence all we need is to show that the    latter occurs if and only if \eqref{eq:rank_dim} holds for all $x^*\in \mathcal{Y}\cap \R^n_{>0}$. As $\mathcal{Y}\cap \R^n_{>0}\neq \emptyset$, the latter is equivalent to require that \eqref{eq:rank_dim}  holds  for all $x^*\in \mathcal{Y}$.

Let  $m=n-\mathrm{dim}(\mathcal{Y})$, write $\mathcal{P}=\langle f_1,\ldots ,f_r\rangle $ and $f=(f_1,\dots,f_r)$.
For the forward implication, assume that $x_i-C \in \mathcal{P}$ for some $C\in \Rp $. Then $\frac{\partial (x_i-C)}{\partial x}=e_i$ (the canonical vector in $\R^n$) and
\begin{equation}\label{eq:thm6reg}
e_i\in \mathrm{rowspan}\left( \tfrac{\partial f(x^*)}{\partial x}\right)\qquad \textrm{for all } \, x^*\in \mathcal{Y}.
\end{equation}
Equivalently, the tangent space to $\mathcal{Y}$ at any point is perpendicular to $e_i$. 
If $x^*\in \mathcal{Y}$ is not singular as a point in $\VC (\IS)$, then the tangent spaces to $\mathcal{Y}$ and to $\VC (\IS )$ at $x^*$ agree, and further $\rank \big(\frac{\partial h(x^*)}{\partial x}\big)=m$. Hence \eqref{eq:thm6reg} implies $e_i \in  \mathrm{rowspan}\big(  \frac{\partial h(x^*)}{\partial x}\big)$ for all $x^*\in \mathcal{Y}$ non-singular in  $\VC (\IS)$, or equivalently 
\begin{equation}\label{eq:rank}
\mathrm{rank}\Big( \big(\tfrac{\partial h(x^*)}{\partial x}\big)^i\Big) <m
\end{equation} 
for all  $x^*\in \mathcal{Y}$  non-singular in  $\VC (\IS)$. 
Condition \eqref{eq:rank} is polynomial, and the set of points in $\mathcal{Y}$ that are non-singular in $\VC (\IS)$ is Zariski dense in $\mathcal{Y}$. Hence 
\eqref{eq:rank} holds for all   $x^*\in \mathcal{Y}$ non-singular in $\VC (\IS)$ if and only if it holds for all $x^*\in \mathcal{Y}$. 
\medskip

We consider now the reverse implication and assume that \eqref{eq:rank} holds for all non-singular points  in $\mathcal{Y}$. This means that $e_i$ is orthogonal to any vector tangent to $\mathcal{Y}$ at a non-singular point. 
Fix a non-singular point $x^*$  in $\mathcal{Y}$, and let $U\subseteq \R^n$ be an open set with the Euclidian topology, such that $\mathcal{Y}\cap U$ is a connected and consists of non-singular points. Then $\mathcal{Y}\cap U$ is a real differentiable manifold, and furthermore, is Zariski dense in $\mathcal{Y}$. We show that there exists a polynomial of the form $x_i-C$
that vanishes on $\mathcal{Y}\cap U$, and this  implies that it also vanishes on $\mathcal{Y}$ as desired.

Given another point $y^*$ in  $\mathcal{Y}\cap U$, let $\alpha\colon [0,1]\rightarrow \mathcal{Y}\cap U$ be a differentiable curve connecting $x^*$ to $y^*$. 
Then, for any $t$,  $\alpha'(t)$ is orthogonal to $e_i$ by assumption, as it is tangent to a point of $\mathcal{Y}$. It follows that $\alpha'(t)_i=0$ for $t\in [0,1]$. 
Let $\pi\colon U \rightarrow \R$ be the projection onto the $i$-th component. The gradient $\tfrac{\partial \pi}{\partial x}$ of $\pi$
is zero everywhere but $1$ in the $i$-th component. 
Hence $\tfrac{\partial \pi(\alpha(t))}{\partial x}\cdot \alpha'(t)=0$ for all $t\in [0,1]$.
By the gradient theorem, we have
\[ 0 = \int_0^1\tfrac{\partial \pi(\alpha(t))}{\partial x}\cdot \alpha'(t) \, dt = \pi(\alpha(1))-\pi(\alpha(0)) = y^*_i - x^*_i. \]
This shows that $x_i$ is constant on $\mathcal{Y}\cap U$, and concludes the proof of the proposition. 
\end{proof}

Proposition~\ref{prop:6reg} can be used to  identify local ACR in components with non-singular points, but requires the use of the radical of $\langle g_1,\dots,g_s\rangle$. In applications, where $g$ has parametric coefficients, the radical ideal might not be easy to find. This problem is overcome when considering components with non-degenerate points of $g(x)=0$. 

\smallskip
We are now ready to prove Theorem~\ref{prop:6regb_4}.

\begin{proof}[Proof of Theorem~\ref{prop:6regb_4}]
Assume first that $g$ is polynomial.
Let $\langle h_1,\ldots ,h_{s'}\rangle$ be the radical of $\mathcal{I}=\langle g_1,\dots,g_s\rangle$. 
By Proposition~\ref{prop:degVSsing}, any irreducible component $\mathcal{Y}\in \Snd(g)$ has dimension $n-s$ and contains non-singular points. 
It follows that for any $x^*\in  \Vp_{\mathcal{C}}$ that is non-singular in $\mathbb{V}_\C(\mathcal{I})$, we have
\[ s = \rank\Big( \big( \tfrac{\partial h(x^*)}{\partial x}\big)\Big)=  \rank\Big( \big( \tfrac{\partial g(x^*)}{\partial x}\big)\Big), \quad \textrm{and hence} \quad \rank\Big( \big( \tfrac{\partial g(x^*)}{\partial x}\big)^i \Big) =  \rank\Big( \big( \tfrac{\partial h(x^*)}{\partial x}\big)^i \Big). \]
Now the claim follows from Proposition~\ref{prop:6reg}.

If $g$ is not polynomial, we consider $\widetilde{g}$ from \eqref{eq:gtilde}. System $g(x)=0$ has local ACR with respect to $x_i$ over  $\Vp_{\mathcal{C}}$ if and only if 
 the polynomial system $\widetilde{g}(z)=0$ has local ACR with respect to $z_i$ over $\varphi^{-1}(\Vp_{\mathcal{C}})$, with $\varphi$ in \eqref{eq:corresp_x_z}. The claim now follows using that condition \eqref{eq:rank_dim2_4}  on $\widetilde{g}$, which characterizes  local ACR for $\widetilde{g}$,  is equivalent to condition \eqref{eq:rank_dim2_4}  on $g$ by \eqref{eq:jacg}.

\end{proof}

We conclude this section with a criterion to check Theorem \ref{thm:6_5}(3) when $g$ is polynomial.

\begin{Prop}\label{thm:6} 
Let $g=(g_1,\dots,g_s)$ be a polynomial function in $\R^n$  and $\mathcal{C}$ a non-empty set of irreducible components of $\Vp$. 
The following are equivalent:
\begin{enumerate}
 	\item 
	$\rank\Big( \big( \tfrac{\partial g(x^*)}{\partial x}\big)^i \Big)<  s$ for all $x^*\in \Vp_\mathcal{C}$.
	\item All ($s\times s$)-minors of $\frac{\partial g(x)}{\partial x}$ not involving column $i$ belong to $\bigcap _{\mathcal{P} \, \textrm{prime}\, \mid\, \VC (\mathcal{P})\in \mathcal{C}}\mathcal{P}$.
\end{enumerate}
\end{Prop}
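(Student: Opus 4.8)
The statement is a standard translation between a pointwise rank condition on a semi-algebraic set and an ideal-membership condition, so the plan is to reduce everything to the Nullstellensatz-type fact recorded in Remark~\ref{rem:Zar_dense} together with the primary decomposition of the ideal $\mathcal{I}=\langle g_1,\dots,g_s\rangle$. Let $M_1(x),\dots,M_N(x)$ denote the $(s\times s)$-minors of $\tfrac{\partial g(x)}{\partial x}$ that do not involve column $i$; these are polynomials in $\R[x_1,\dots,x_n]$. Condition (1) says exactly that every $M_\ell$ vanishes at every point of $\Vp_{\mathcal{C}}$, that is, on $\left(\bigcup_{\mathcal{Y}\in\mathcal{C}}\mathcal{Y}\right)\cap\R^n_{>0}$, since $\rank\big((\tfrac{\partial g(x^*)}{\partial x})^i\big)<s$ is equivalent to the simultaneous vanishing of all such minors. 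Condition (2) says that each $M_\ell$ lies in $\mathcal{P}$ for every prime $\mathcal{P}$ with $\VC(\mathcal{P})\in\mathcal{C}$, equivalently (by the Nullstellensatz for prime ideals, since $\VC(\mathcal{P})$ is irreducible and $\mathcal{P}=\mathbb{I}(\VC(\mathcal{P}))$ as these are the prime components of a radical — here we only need that a polynomial vanishes on $\VC(\mathcal{P})$ iff it lies in $\mathbb{I}(\VC(\mathcal{P}))$) that each $M_\ell$ vanishes on the complex variety $\bigcup_{\mathcal{Y}\in\mathcal{C}}\mathcal{Y}$.

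Given this reformulation, the proof reduces to the single claim: for a polynomial $q$, $q$ vanishes on $\left(\bigcup_{\mathcal{Y}\in\mathcal{C}}\mathcal{Y}\right)\cap\R^n_{>0}$ if and only if $q$ vanishes on $\bigcup_{\mathcal{Y}\in\mathcal{C}}\mathcal{Y}$. The implication $(\Leftarrow)$ is immediate. For $(\Rightarrow)$, I would argue component by component: fix an irreducible component $\mathcal{Y}\in\mathcal{C}$ and suppose $q$ vanishes on $\mathcal{Y}\cap\R^n_{>0}$. Since $\mathcal{Y}$ is an irreducible component of $\Vp=\VC(\mathcal{I})\cap\R^n_{>0}$ in the sense defined before Example~\ref{ex:introCRN2}, it is of the form $\mathcal{Y}'\cap\R^n_{>0}$ for an irreducible component $\mathcal{Y}'$ of $\VC(\mathcal{I})$ — but crucially $\mathcal{Y}'$ was assumed to meet $\R^n_{>0}$ (otherwise it would not give a component of $\Vp$), so $\mathcal{Y}\cap\R^n_{>0}\neq\emptyset$ is a non-empty Euclidean-open subset of the real points of $\mathcal{Y}'$. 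Here I must be slightly careful: to conclude that $\mathcal{Y}\cap\R^n_{>0}$ is Zariski dense in $\mathcal{Y}'$ I need $\mathcal{Y}'$ to have a \emph{non-singular} positive point, or more precisely that the real points of $\mathcal{Y}'$ are Zariski dense; this is where the hypothesis that $\mathcal{C}$ consists of irreducible components \emph{of $\Vp$} matters. The cleanest route is to invoke Theorem~\ref{thm:prime}: if $\mathcal{Y}'$ has a real non-singular point then $\mathcal{Y}'\cap\R^n_{>0}$ being a non-empty open subset of $\mathcal{Y}'_\R$ is Zariski dense in $\mathcal{Y}'$, so $q$ vanishing there forces $q$ to vanish on all of $\mathcal{Y}'$.

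The main obstacle — and the point that needs the most care — is exactly the density step in the previous paragraph: a real algebraic variety can have its real points be a proper lower-dimensional (Zariski-closed) subset, in which case vanishing on the positive real points says nothing about the complex variety. The statement of Proposition~\ref{thm:6} as written takes $\mathcal{C}$ to be an arbitrary non-empty set of irreducible components of $\Vp$, and by the convention fixed earlier in the paper each such component comes from an irreducible component $\mathcal{Y}'$ of $\VC(\mathcal{I})$ that \emph{intersects $\R^n_{>0}$}. This still need not by itself guarantee a non-singular real point (compare Remark~\ref{rem:ex_Deg}), so in the writeup I would either (a) add the implicit standing assumption that components in $\mathcal{C}$ contain non-singular real points — consistent with how the result is used in the proof of Theorem~\ref{prop:6regb_4}, where $\mathcal{C}\subseteq\Snd(g)$ and Proposition~\ref{prop:degVSsing} supplies non-singular points — and then the argument above via Theorem~\ref{thm:prime} goes through verbatim, or (b) observe that when $\mathcal{C}\subseteq\Sreg$ (the relevant case) each $\mathcal{Y}'$ has a positive non-singular point by definition. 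I would write the proof under the hypothesis $\mathcal{C}\subseteq\Sreg$, note explicitly that this is the case in which the statement is applied, and remark that the general statement of (1)$\Leftrightarrow$(2) for arbitrary $\mathcal{C}$ may fail precisely on components without real non-singular points. With that understood, combining the two reformulations with the density claim gives (1)$\Leftrightarrow$(2), since $M_\ell\in\bigcap_{\mathcal{P}}\mathcal{P}$ iff $M_\ell$ vanishes on $\bigcup_{\mathcal{Y}\in\mathcal{C}}\mathcal{Y}'$ iff (by density on each component) $M_\ell$ vanishes on $\bigcup_{\mathcal{Y}\in\mathcal{C}}(\mathcal{Y}'\cap\R^n_{>0})=\Vp_{\mathcal{C}}$, which is (1).
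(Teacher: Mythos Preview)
Your approach is essentially the same as the paper's: both reduce the equivalence to Theorem~\ref{thm:prime}, using that (1) says the minors vanish on the positive real part of each component and (2) says they lie in the corresponding prime ideals. The paper's proof is a one-line invocation of Theorem~\ref{thm:prime}; you have spelled out the same argument in full.

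Your careful reading has in fact caught a subtlety the paper glosses over. Theorem~\ref{thm:prime} requires each component in $\mathcal{C}$ to contain a \emph{non-singular} real point, but the proposition as stated only asks that $\mathcal{C}$ consist of irreducible components of $\Vp$, i.e.\ components meeting $\R^n_{>0}$. As you note (and as Remark~\ref{rem:ex_Deg} illustrates), a component can meet $\R^n_{>0}$ only in singular points, and then vanishing on the positive real part does not force ideal membership. The paper's proof tacitly assumes $\mathcal{C}\subseteq\Sreg$; this is indeed how the result is applied (via $\Snd(g)\subseteq\Sreg$), so your suggestion to record that hypothesis explicitly is the right fix.
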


\begin{proof}
Statement (2) is a rephrasing of (1) using Theorem \ref{thm:prime}, as (1) indicates that all ($s\times s$)-minors of $\frac{\partial g(x)}{\partial x}$ not involving column $i$ vanish on the positive real part of the variety defined by the ideal $\bigcap _{\mathcal{P} \, \textrm{prime}\, \mid\, \VC (\mathcal{P})\in \mathcal{C}}\mathcal{P}$. 
\end{proof}

  \section{Conclusion}  
In this work we considered generalized polynomial systems  in $\R^n$
and studied the robustness of the set of solutions in $\Rp^n$ (denoted by $\Vp$).
The robustness of the elements in $\Vp$ with respect to a variable $x_i$ can be measured at different scales, yielding to the two properties studied here in detail: zero sensitivity and (local) ACR. 

We have provided a simple criterion (Theorem \ref{thm:k_fix_rankVSsens0}) for determining zero sensitivity of a system with respect to $x_i$ at a non-degenerate point of $\Vp$ (with respect to a vector subspace). The criterion is based on the drop of the rank of the Jacobian matrix of $g$ after removing the $i$-th column. Importantly, this criterion can be stated and  implemented as a computational algebra problem.
An analogous criterion is obtained for local ACR (Theorem~\ref{prop:6regb_4}), and this allowed us to determine when local ACR and zero sensitivity agree   (Theorem \ref{thm:6_5}). 

Deciding upon ACR is a difficult problem (theoretically solvable, but not decidable for realistic systems with the currently available computational power). 
Thanks to the connection to zero sensitivity and the introduction of local ACR, we have provided a practical test to decide upon ACR. The criterion turns out to involve only linear algebra on symbolic matrices when considering families of parametrized systems arising from reaction network (Theorem \ref{thm:main_b}), and hence gives the possibility to scan databases of reaction networks for the occurrence of ACR.

\bigskip
\noindent
\textbf{Acknowledgements. } BP acknowledges funding from the European Union's Horizon 2020 research and innovation
programme under the Marie Sklodowska-Curie IF grant agreement No 794627. EF has been supported by the Independent Research Fund of Denmark and by the Novo Nordisk Foundation grant NNF18OC0052483. 

{\small

}

\begin{thebibliography}{10}

\bibitem{alon:robustness}
U.~Alon, M.~G. Surette, N.~Barkai, and S.~Leibler.
\newblock {{R}obustness in bacterial chemotaxis}.
\newblock {\em Nature}, 397(6715):168--171, 1999.

\bibitem{barkai:robustness}
N.~Barkai and S.~Leibler.
\newblock {{R}obustness in simple biochemical networks}.
\newblock {\em Nature}, 387(6636):913--917, 1997.

\bibitem{BCR}
J.~Bochnak, M.~Coste, and M.~Roy.
\newblock {\em Real algebraic geometry}, volume~36 of {\em Ergebnisse der
  Mathematik und ihrer Grenzgebiete (3)}.
\newblock Springer-Verlag, Berlin, 1998.

\bibitem{BF_3}
B.~Brehm and B.~Fiedler.
\newblock Sensitivity of chemical reaction networks: A structural approach. 3.
  regular multimolecular systems.
\newblock {\em Math. Models Methods Appl. Sci.}, 41:1344--1376, 2018.

\bibitem{Clarke:1980tz}
B.~L. Clarke.
\newblock {\em {Stability of Complex Reaction Networks}}, volume~43 of {\em
  Advances in Chemical Physics}.
\newblock John Wiley {\&} Sons, Inc., Hoboken, NJ, USA, 1980.

\bibitem{conradi-feliu-mincheva}
C.~Conradi, E.~Feliu, and M.~Mincheva.
\newblock On the existence of hopf bifurcations in the sequential and
  distributive double phosphorylation cycle.
\newblock {\em Mathematical Biosciences and Enginnering}, 1(17):494--513, 2020.

\bibitem{CLO}
D.~A. Cox, J.~Little, and D.~O'Shea.
\newblock {\em Ideals, varieties, and algorithms}.
\newblock Undergraduate Texts in Mathematics. Springer, Cham, fourth edition,
  2015.

\bibitem{CDSS}
G.~Craciun, A.~Dickenstein, A.~Shiu, and B.~Sturmfels.
\newblock Toric dynamical systems.
\newblock {\em J. Symbolic Comput.}, 44(11):1551--1565, 2009.

\bibitem{Sturmfels-Binomial}
D.~Eisenbud and B.~Sturmfels.
\newblock Binomial ideals.
\newblock {\em Duke Math. J.}, 84(1):1--45, 1996.

\bibitem{feinberg-book}
M.~Feinberg.
\newblock {\em Foundations of Chemical Reaction Network Theory}, volume 202 of
  {\em Applied Mathematical Sciences}.
\newblock Springer International Publishing, 2019.

\bibitem{feinberg-invariant}
M.~Feinberg and F.~J.~M. Horn.
\newblock Chemical mechanism structure and the coincidence of the
  stoichiometric and kinetic subspaces.
\newblock {\em Arch. Rational. Mech. Anal.}, 66(1):83--97, 1977.

\bibitem{Fel}
E.~Feliu.
\newblock Sign-sensitivities for reaction networks: an algebraic approach.
\newblock {\em Math. Biosci. Eng.}, 16(6):8195--8213, 2019.

\bibitem{Fiedler-Mochizuki}
B.~Fiedler and A.~Mochizuki.
\newblock Sensitivity of chemical reaction networks: a structural approach. 2.
  regular monomolecular systems.
\newblock {\em Math. Models Methods Appl. Sci.}, 38:3519--3537, 2015.

\bibitem{Karp}
R.~L. Karp, M.~P\'{e}rez~Mill\'{a}n, T.~Dasgupta, A.~Dickenstein, and
  J.~Gunawardena.
\newblock Complex-linear invariants of biochemical networks.
\newblock {\em J. Theoret. Biol.}, 311:130--138, 2012.

\bibitem{Kemper}
G.~Kemper.
\newblock {\em A Course in Commutative Algebra}, volume 256 of {\em Graduate
  Texts in Mathematics}.
\newblock Springer-Verlag Heidelberg, 2011.

\bibitem{MullerSigns}
S.~M\"uller, E.~Feliu, G.~Regensburger, C.~Conradi, A.~Shiu, and
  A.~Dickenstein.
\newblock Sign conditions for injectivity of generalized polynomial maps with
  applications to chemical reaction networks and real algebraic geometry.
\newblock {\em Found Comput Math}, 16:69--97, 2016.

\bibitem{Mum_Red}
D.~Mumford.
\newblock {\em The Red Book of Varieties and Schemes}.
\newblock Lecture notes in Mathematics, volume 1358. Springer, second edition,
  1999.

\bibitem{Nakajima}
H.~Nakajima.
\newblock Sensitivity and stability of flow networks.
\newblock {\em Ecological Modelling,}, 62:123--133, 1992.

\bibitem{OM16}
T.~Okada and A.~Mochizuki.
\newblock Law of localization in chemical reaction networks.
\newblock {\em Phys. Rev. Lett.}, 117:048101, 2016.

\bibitem{P-M}
M.~P\'erez~Mill\'an.
\newblock {\em M\'etodos algebraicos para el estudio de redes bioqu\'imicas}.
\newblock PhD Thesis, 2011.
\newblock Available at
  \url{https://bibliotecadigital.exactas.uba.ar/collection/tesis/document/tesis_n5103_PerezMillan}.

\bibitem{P-MDSC}
M.~P\'{e}rez~Mill\'{a}n, A.~Dickenstein, A.~Shiu, and C.~Conradi.
\newblock Chemical reaction systems with toric steady states.
\newblock {\em Bull. Math. Biol.}, 74(5):1027--1065, 2012.

\bibitem{Shaf}
I.~R. Shafarevich.
\newblock {\em Basic Algebraic Geometry 1: Varieties in Projective Space}.
\newblock Springer, New York-Berlin-Heidelberg, third edition, 2013.

\bibitem{S-F}
G.~Shinar and M.~Feinberg.
\newblock Structural sources of robustness in biochemical reaction networks.
\newblock {\em Science}, 327(5971):1389--1391, 2010.

\bibitem{SMJF}
G.~Shinar, A.~E. Mayo, H.~Ji, and M.~Feinberg.
\newblock Constraints on reciprocal flux sensitivities in biochemical reaction
  networks.
\newblock {\em Biophys. J.}, 100 6:1383--91, 2011.

\bibitem{Sot}
F.~Sottile.
\newblock Real algebraic geometry for geometric constraints.
\newblock In {\em Handbook of geometric constraint systems principles},
  Discrete Math. Appl. (Boca Raton), pages 273--285. CRC Press, Boca Raton, FL,
  2019.

\bibitem{steuer:robustness}
R.~Steuer, S.~Waldherr, V.~Sourjik, and M.~Kollmann.
\newblock {{R}obust signal processing in living cells}.
\newblock {\em PLoS Comput. Biol.}, 7(11):e1002218, 2011.

\bibitem{varma_morbidelli_wu_1999}
A.~Varma, M.~Morbidelli, and H.~Wu.
\newblock {\em Parametric Sensitivity in Chemical Systems}.
\newblock Cambridge Series in Chemical Engineering. Cambridge University Press,
  1999.

\bibitem{W-F_SIAM}
C.~Wiuf and E.~Feliu.
\newblock {Power-Law Kinetics and Determinant Criteria for the Preclusion of
  Multistationarity in Networks of Interacting Species}.
\newblock {\em SIAM J. Applied Dynamical Systems}, 12(4):1685--1721, 2013.

\bibitem{yi:robustness}
T.~M. Yi, Y.~Huang, M.~I. Simon, and J.~Doyle.
\newblock {{R}obust perfect adaptation in bacterial chemotaxis through integral
  feedback control}.
\newblock {\em Proc. Natl. Acad. Sci. U.S.A.}, 97(9):4649--4653, 2000.

\end{thebibliography}
\end{document}